\title{The topological Singer construction}
\author{Sverre Lun{\o}e--Nielsen}
\address{Department of Mathematics, University of Oslo, Norway}
\email{sverreln@math.uio.no} \urladdr{http://folk.uio.no/sverreln}
\author{John Rognes}
\address{Department of Mathematics, University of Oslo, Norway}
\email{rognes@math.uio.no} \urladdr{http://folk.uio.no/rognes}
\date{October 27th 2010}
\newtheorem{thm}{Theorem}[section]
\newtheorem{lemma}[thm]{Lemma}
\newtheorem{prop}[thm]{Proposition}
\newtheorem{cor}[thm]{Corollary}
\theoremstyle{definition}
\newtheorem{dfn}[thm]{Definition}
\theoremstyle{remark}
\newtheorem{remark}[thm]{Remark}
\numberwithin{equation}{section}
\renewcommand{\:}{\colon}
\newcommand{\A}{\mathscr{A}}
\newcommand{\C}{\mathbb{C}}
\newcommand{\F}{\mathbb{F}}
\newcommand{\N}{\mathbb{N}}
\newcommand{\R}{\mathbb{R}}
\newcommand{\T}{\mathbb{T}}
\newcommand{\Z}{\mathbb{Z}}
\newcommand{\bfn}{\mathbf{n}}
\newcommand{\fA}{\mathfrak{A}}
\newcommand{\into}{\hookrightarrow}
\newcommand{\longto}{\longrightarrow}
\newcommand{\U}{\mathscr{U}}
\renewcommand{\S}{\mathscr{S}}
\DeclareMathOperator{\im}{im}
\DeclareMathOperator{\ad}{ad}
\DeclareMathOperator{\alg}{alg}
\DeclareMathOperator{\sd}{sd}
\DeclareMathOperator{\pre}{pre}
\DeclareMathOperator{\Tor}{Tor}
\DeclareMathOperator{\Ext}{Ext}
\DeclareMathOperator{\Map}{Map}
\newcommand{\map}{F}
\DeclareMathOperator*{\Rlim}{Rlim}
\DeclareMathOperator*{\colim}{colim}
\DeclareMathOperator*{\hocolim}{hocolim}
\DeclareMathOperator*{\holim}{holim}
\DeclareMathOperator{\Hom}{Hom}
\DeclareMathOperator{\Sq}{Sq}
\DeclareMathOperator{\SP}{P}
\DeclareMathOperator{\THH}{THH}
\newcommand{\tEG}{\widetilde{EG}}
\newcommand{\tEC}{\widetilde{EC}}
\newcommand{\tF}[1]{\widetilde{E}_{#1}}
\newcommand{\tH}{\widehat{H}}
\newcommand{\hatE}{\widehat{E}}
\newcommand{\ctensor}{\mathbin{\widehat{\otimes}}}
\begin{document}

\begin{abstract}
We study the continuous (co-)homology of towers of spectra, with emphasis
on a tower with homotopy inverse limit the Tate construction $X^{tG}$
on a $G$-spectrum $X$.   When $G=C_p$ is cyclic of prime order and
$X=B^{\wedge p}$ is the $p$-th smash power of a bounded below spectrum $B$
with $H_*(B; \F_p)$ of finite type, we prove that $(B^{\wedge p})^{tC_p}$
is a topological model for the Singer construction $R_+(H^*(B; \F_p))$ on
$H^*(B; \F_p)$.  There is a map $\epsilon_B \: B\to (B^{\wedge p})^{tC_p}$
inducing the $\Ext_\A$-equivalence $\epsilon\: R_+(H^*(B; \F_p))\to
H^*(B; \F_p)$.  Hence $\epsilon_B$ and the canonical map $\Gamma \:
(B^{\wedge p})^{C_p}\to (B^{\wedge p})^{hC_p}$ are $p$-adic equivalences.
\end{abstract}

\maketitle{}
\setcounter{tocdepth}{1}
\tableofcontents{}

\section{Introduction}
Let $p$ be a fixed prime.  We study homology and cohomology groups with
$\F_p$-coefficients associated to towers of spectra
\begin{equation} \label{equ_introtower}
Y = \holim_{n\to-\infty} Y_n \to \cdots \to Y_{n-1} \to Y_n \to \cdots \,,
\end{equation}
where each $Y_n$ is bounded below and of finite type over~$\F_p$, and
$Y$ is equal to the homotopy inverse limit of the tower.  By a result
of Caruso, May and Priddy \cite{CMP1987}, there exists an \emph{inverse
limit of Adams spectral sequences} that calculates the homotopy groups
of the $p$-completion $Y\sphat_p = \map(S^{-1}\!/p^\infty, Y)$ of~$Y$,
where $\map(-, -)$ denotes the function spectrum and $S^{-1}\!/p^\infty$
is the Moore spectrum with homology $\Z/p^\infty$ in degree~$-1$.
The $E_2$-term for this spectral sequence is given by the $\Ext$-groups
of the direct limit of cohomology groups
\begin{equation} \label{equ_introcolim}
H_c^*(Y; \F_p) = \colim_{n\to-\infty} H^*(Y_n; \F_p) \,,
\end{equation}
arising from the tower~\eqref{equ_introtower}, considered as a module
over the mod~$p$ Steenrod algebra $\A$.  We shall refer to this colimit
as the \emph{continuous cohomology groups} of $Y$.

\subsection{The algebraic Singer construction}
A natural question is: how well do we understand the structure of
\eqref{equ_introcolim} as an $\A$-module?  There is an interesting example
of a tower of spectra where this question has the answer: very well.
In fact, this question appeared in the study of Segal's Burnside ring
conjecture for cyclic groups of prime order.  At the heart of W.~H.~Lin's
proof of the case $p=2$, published in \cite{LDMA1980},  lies a careful
study of the $\A$-module
$$
P = H_c^*(\R P^\infty_{-\infty}; \F_2) = 
  \colim_{m\to\infty} H^*(\R P^\infty_{-m}; \F_2) \,,
$$
and its associated $\Ext$-groups $\Ext^{*,*}_\A(P, \F_2)$.  It turns out
that $P = P(x, x^{-1}) = \F_2[x, x^{-1}]$ is isomorphic to the so-called
\emph{Singer construction} $R_+(M)$ on the trivial $\A$-module $M=\F_2$,
up to a degree shift.  The Singer construction has an explicit description
as a module over $\A$.  More importantly, it has the property that
there is a natural $\A$-module homomorphism $\epsilon \: R_+(M) \to M$
that induces an isomorphism
$$
\epsilon^* \: \Ext^{*,*}_\A(M, \F_2) \overset\cong\to
\Ext^{*,*}_\A(R_+(M), \F_2) \,.
$$

\subsection{The topological Singer construction}
Our objective is to present a topological realization and a useful
generalization of these results.  Specifically, for a bounded below
spectrum $B$ of finite type over~$\F_p$, we construct a tower
of spectra
$$
(B^{\wedge p})^{tC_p} = \holim_{n\to-\infty} (B^{\wedge p})^{tC_p}[n]
\to \cdots \to (B^{\wedge p})^{tC_p}[n{-}1]
\to (B^{\wedge p})^{tC_p}[n] \to \cdots
$$
as in~\eqref{equ_introtower}.  Here $C_p$ is the cyclic group of
order~$p$, and $B^{\wedge p}$ is a $C_p$-equivariant model of the $p$-th
smash power of $B$.  We call
$R_+(B) = (B^{\wedge p})^{tC_p}$
the \emph{topological Singer construction} on $B$, and prove in
Theorem~\ref{thm_topmodelsalg} that there is a natural isomorphism
$$
R_+(H^*(B; \F_p)) \cong H_c^*(R_+(B); \F_p)
	= \colim_{n\to-\infty} H^*((B^{\wedge p})^{tC_p}[n]; \F_p)
$$
of $\A$-modules.  Furthermore, we define a natural stable
map $\epsilon_B \: B \to (B^{\wedge p})^{tC_p}$, and prove in
Proposition~\ref{prop_epsilonB} that it realizes the $\A$-module
homomorphism $\epsilon \: R_+(H^*(B; \F_p)) \to H^*(B; \F_p)$ in
continuous cohomology.

The Segal conjecture for $C_p$ follows as a special case of this, when
$B=S$ is the sphere spectrum, since $S^{\wedge p}$ is a model for the
genuinely $C_p$-equivariant sphere spectrum.  More generally, we prove
in Theorem~\ref{thm_fixedpoints} that for any bounded below spectrum
$B$ with $H_*(B; \F_p)$ of finite type the canonical map
$$
\Gamma \: (B^{\wedge p})^{C_p} \longto (B^{\wedge p})^{hC_p} \,,
$$
relating the $C_p$-fixed points and $C_p$-homotopy fixed points for
$B^{\wedge p}$, becomes a homotopy equivalence after $p$-completion.
In \cite{BBLR}*{1.7}, with M.~B{\"o}kstedt and R.~Bruner, we
deduce from this that there are $p$-adic equivalences $\Gamma_n \:
(B^{\wedge p^n})^{C_{p^n}} \longto (B^{\wedge p^n})^{hC_{p^n}}$ for
all $n\ge1$.

\subsection{Outline of the paper}
In \textsection \ref{sec_limitsofspectra}, we discuss towers of spectra
as above, and the associated limit systems obtained by applying homology
or cohomology with $\F_p$-coefficients.  When dealing with towers of ring
spectra it is convenient to work in homology, while for the formation of
the $\Ext$-groups mentioned above it is convenient to work in cohomology.
In order to be able to switch back and forth between cohomology and
homology we discuss linear topologies arising from filtrations, and
continuous dualization.  Then we see how an $\A$-module structure in
cohomology dualizes to an $\A_*$-comodule structure in a suitably
completed sense.

In \textsection \ref{sec_algsinger} we recall the algebraic Singer
construction on an $\A$-module $M$, in its cohomological form,
and study a dual homological version, defined for $\A_*$-comodules $M_*$
that are bounded below and of finite type.  We give the details for
the form of the Singer construction that is related to the
group $C_p$, since most references only consider the smaller version
related to the symmetric group $\Sigma_p$.

Then, in \textsection \ref{sec_tate}, we define a specific tower of
spectra $\{ X^{tG}[n] \}_n$ with homotopy inverse limit equivalent
to the Tate construction $X^{tG} = [\tEG \wedge \map(EG_+, X)]^G$ on a
$G$-spectrum $X$.  We consider the associated (co-)homological Tate
spectral sequences, and compare our approach of working with homology
groups to earlier papers that focused directly on homotopy groups.

In \textsection \ref{sec_singer}, we specialize to the case when
$X = B^{\wedge p}$.  This is also where we discuss the genuinely
$C_p$-equivariant model of the spectrum $B^{\wedge p}$, given by
the $p$-fold smash product of (symmetric) spectra introduced by
B{\"o}kstedt \cite{B1} in his definition of topological
Hochschild homology.  It is for this particular $C_p$-equivariant model
that we can define the natural stable map $\epsilon_B \: B \to R_+(B)$
realizing Singer's homomorphism $\epsilon$.

\subsection{Notation}
Spectra will usually be named $B$, $X$ or $Y$.  Here $B$ will be
a bounded below spectrum or $S$-algebra of finite type over~$\F_p$.
Spectra denoted by $X$ will be equipped with an equivariant structure.
The main examples we have in mind are the $p$-fold smash product $X =
B^{\wedge p}$ treated here, and the topological Hochschild homology
spectrum $X = \THH(B)$ treated in the sequel~\cite{LR2}.  When dealing
with generic towers of spectra, we will use $Y$.  The example of main
interest is the Tate construction $Y = X^{tG}$ on some $G$-equivariant
spectrum $X$.

We write $\A$ for the mod~$p$ Steenrod algebra and $\A_*$ for its
$\F_p$-linear dual.  We will work with left modules over $\A$ and left
comodules under $\A_*$.  In the body of the paper we write $H_*(B) =
H_*(B; \F_p)$ and $H^*(B) = H^*(B; \F_p)$, for brevity.  Unlabeled $\Hom$
means $\Hom_{\F_p}$, and $\otimes$ means $\otimes_{\F_p}$.

\subsection{History and notation of the Singer construction}
The Singer construction appeared originally for $p=2$ in \cite{S1980} \and
\cite{S1981}, and for $p$ odd in \cite{LS1982}.  The work presented here
concentrates on its relation to the calculations by Lin and Gunawardena
and their work on the Segal conjecture for groups of prime order.
A published account for the case of the group of order $2$ is found
in \cite{LDMA1980}.  A further study appears in \cite{AGM1985}, where
a more conceptual definition of the Singer construction is given.

In W.~Singer's paper \cite{S1980}, the following problem is posed:
Let $M$ be an unstable $\A$-module and let $\Delta = \F_2\{\Sq^r \mid
r\in \Z\}$.  There is a map of graded $\F_2$-vector spaces
$$
  d\:\Delta\otimes M\to M
$$
taking $\Sq^r\otimes m$ to $\Sq^r(m)$ for $r\ge0$, and to $0$ for $r<0$.
Does there exist a natural $\A$-module structure on the source of $d$
rendering this map an $\A$-linear homomorphism?  Singer answers this
question affirmatively, by using an idea of Wilkerson \cite{W1977}
to construct the $\A$-module that he denotes $R_+(M)$, an $\A$-module
map $d \: R_+(M) \to M$ of degree~$1$, and an isomorphism $R_+(M) \cong
\Delta \otimes M$, also of degree~$1$, that makes the two maps called
$d$ correspond.  In the end, the construction does not depend on $M$
being unstable.

In Li and Singer's paper \cite{LS1982}, the odd-primary version of
this problem is solved, with $\Delta = \F_p\{ \beta^i \SP^r \mid i
\in \{0,1\}, r \in \Z\}$.  Starting with that paper there is a degree
shift in the notation: $R_+(M)$ now denotes the suspension of $R_+(M)$
from Singer's original paper, so that the $\A$-module map $d \: R_+(M)
\to M$ is of degree~$0$.

In connection with the Segal conjecture, Adams, Gunawardena and
Miller \cite{AGM1985} published an algebraic account of the Singer
construction, for all primes $p$.  They write $T'(M)$ for the $\A$-module
denoted $R_+(M)$ in \cite{S1980} and \cite{S1981}, and let $T''(M) =
\Sigma T'(M)$ be its suspension, denoted $R_+(M)$ in \cite{LS1982}.
For the trivial $\A$-module $\F_p$, $T''(\F_p)$ is isomorphic to the
Tate homology $\tH_{-*}(\Sigma_p; \F_p)$, which can be obtained by
localization from $\Sigma H^*(\Sigma_p; \F_p)$.  Hence $T'(\F_p) =
\tH^*(\Sigma_p; \F_p)$ is a localized form of $H^*(\Sigma_p; \F_p)$.
Adams, Gunawardena and Miller are not only concerned with the Segal
conjecture for the groups of prime order, but also for the elementary
abelian groups $(C_p)^n$, so the cross product in cohomology is important
for them.  They therefore prefer $T'$ over $T''$.  In fact, they really
work with an extended functor $T(M) = T(\F_p) \otimes_{T'(\F_p)} T'(M)$,
where $T(\F_p) = \tH^*(C_p; \F_p)$.

In our context, for the cohomological study of towers of ring spectra
it will be the coproduct in Tate homology that is most important, which
is why we prefer $T''$ over $T'$.  Again, our emphasis is on $C_p$
instead of $\Sigma_p$, so that the Singer-type functor we shall
work with is the extension
$$
\tH_{-*}(C_p; \F_p) \otimes_{T''(\F_p)} T''(M)
$$
of $T''(M)$, which is $(p-1)$ times larger than the $T''(M) = R_+(M)$
of Li and Singer.  This is the functor we shall denote $R_+(M)$, so
that $R_+(\F_p) = \tH_{-*}(C_p; \F_p)$, and $R_+(M) = \Sigma T(M)$ in
the notation of \cite{AGM1985}.

The connection between the Singer construction and the
continuous cohomology of a tower of spectra, displayed below as
\eqref{equ_inverseextendedpowers} for $G=\Sigma_p$, was found by Haynes
Miller, and explained in \cite{BMMS}*{II.5.1}.  There the functor denoted
$R_+$ is the same as in \cite{LS1982} for odd $p$, shifted up one degree
from \cite{S1981} for $p=2$.

In the following we will make use of the fact that the Singer construction
on an $\A$-module $M$ comes equipped with the homomorphism of $\A$-modules
$\epsilon\: R_+(M)\to M$.  In Singer's work \cite{S1981}, this map has
degree $+1$ (and was named $d$), whereas in \cite{LS1982} and \cite{BMMS}
it has degree 0.  We choose to follow the latter conventions, because this
is the functor that with no shift of degrees describes our continuous
(co-)homology groups.  Furthermore, the homomorphism $\epsilon$ will be
realized by a map of spectra, and will therefore be of degree zero.

We choose to write $R_+(M)$ instead of $T(M)$ or any of its variants,
because the letter $T$ is heavily overloaded by the presence of $\THH$,
the Tate construction and the circle group $\T$.  To add to the confusion,
the letter $T$ is also used in Singer's \cite{S1981} work, but with a
different meaning than the $T$ appearing in \cite{AGM1985}.

\subsection{Acknowledgments}
This work started out as a part of the first author's PhD thesis at the
University of Oslo, supervised by the second author.  The first author
wishes to express the deepest gratitude to Prof.~Rognes for offering ideas,
help and support during the work of his thesis.  Thanks are also due to
R.~Bruner, G.~Carlsson, I.~C.~Borge, S.~Galatius, T.~Kro, M.~B{\"o}kstedt,
M.~Brun and B.~I.~Dundas for interest, comments and discussions.

\section{Limits of spectra} \label{sec_limitsofspectra}
We introduce our conventions regarding towers of spectra and their
associated (co-)homology groups.  Our motivation is the result of
Caruso, May and Priddy, saying that that there is an inverse limit of
Adams spectral sequences arising from such towers.  The input for this
inverse limit of Adams spectral sequences will give us the definition
of continuous (co-)homology groups.

\subsection{Inverse limits of Adams spectral sequences}

\begin{dfn}
Let $R$ be a (Noetherian) ring.  A graded $R$-module $M_*$ is
\emph{bounded below} if there is an integer $\ell$ such that $M_* =
0$ for all $* < \ell$.  It is of \emph{finite type} if it is finitely
generated over $R$ in each degree.

A spectrum $B$ is \emph{bounded below} if its homotopy $\pi_*(B)$
is bounded below as a graded abelian group.  It is of \emph{finite
type over~$\F_p$} if its mod~$p$ homology $H_*(B) = H_*(B; \F_p)$
is of finite type as a graded $\F_p$-vector space.  The spectrum $B$
is of \emph{finite type over $\widehat\Z_p$} if its homotopy $\pi_*(B)$
is of finite type as a graded $\widehat\Z_p$-module.
\end{dfn}

Let $\{Y_n\}_{n\in\Z}$ be a sequence of spectra, with maps $f_n \: Y_{n-1}
\to Y_n$ for all integers~$n$.  Assume that each $Y_n$ is bounded below
and of finite type over~$\F_p$, and let $Y$ be the homotopy inverse
limit of this system:
\begin{equation} \label{equ_tower}
  Y \longto \cdots \longto Y_{n-1} \overset{f_n}\longto Y_n
  \longto \cdots
\end{equation}
In general, $Y$ will neither be bounded below nor of finite type
over~$\F_p$.

For each $n$ there is an Adams spectral sequence $\{E^{*,*}_r(Y_n)\}_r$
with $E_2$-term
$$
E_2^{s,t}(Y_n) = \Ext_{\A}^{s,t}(H^*(Y_n), \F_p)
	\Longrightarrow \pi_{t-s}((Y_n)\sphat_p) \,,
$$
converging strongly to the homotopy groups of the $p$-completion of $Y_n$.
Each map in the tower~\eqref{equ_tower} induces a map of Adams spectral
sequences $f_n \: \{E^{*,*}_r(Y_{n-1})\}_r \to \{E^{*,*}_r(Y_n)\}_r$.
For every $r$ let
$$
E^{*,*}_r(\underline{Y}) = \lim_{n\to-\infty} E^{*,*}_r(Y_n) \,,
$$
and similarly for the $d_r$-differentials.  We now state and prove a
slightly sharper version of \cite{CMP1987}*{7.1}.

\begin{prop} \label{prop_CMPss}
Let $\{Y_n\}_n$ be a tower of spectra such that each $Y_n$ is bounded
below and of finite type over~$\F_p$, and let $Y = \holim_n Y_n$.
Then the bigraded groups $\{E^{*,*}_r(\underline{Y})\}_r$ are the terms
of a spectral sequence, with $E_2$-term
$$
E^{s,t}_2(\underline{Y})
        \cong \Ext_{\A}^{s,t}(\colim_{n\to-\infty} H^*(Y_n), \F_p)
\Longrightarrow \pi_{t-s}(Y\sphat_p) \,,
$$
converging strongly to the homotopy groups of the $p$-completion of $Y$.
\end{prop}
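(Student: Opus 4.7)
My plan is to organize the proof into three steps: (i) identify the $E_2$-term of the inverse-limit system with $\Ext_\A$ of the colimit cohomology; (ii) verify that the bigraded inverse limits assemble into a spectral sequence, i.e.\ that the limiting differentials $d_r = \lim_n d_r^{(n)}$ satisfy $H(E_r(\underline{Y}), d_r) \cong E_{r+1}(\underline{Y})$; and (iii) establish strong convergence to $\pi_*(Y\sphat_p)$. The uniform technical engine throughout will be the Mittag-Leffler condition on the relevant inverse systems, which I expect to be automatic from the bounded-below finite-type hypotheses.

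For step (i), I would use that $\Ext_\A^{*,*}(-, \F_p)$, being contravariant in its first variable, sends filtered colimits to limits up to higher $\lim^i$ terms, producing a Grothendieck-type spectral sequence $\lim^i_n \Ext_\A^{j,t}(H^*(Y_n), \F_p) \Rightarrow \Ext_\A^{i+j,t}(\colim_n H^*(Y_n), \F_p)$. Each $\Ext_\A^{s,t}(H^*(Y_n), \F_p)$ is a finite-dimensional $\F_p$-vector space in each bidegree, since $H^*(Y_n)$ is bounded below of finite type, so the inverse system trivially satisfies Mittag-Leffler in each bidegree and all higher $\lim^i$ vanish; this gives $\Ext_\A^{s,t}(\colim_n H^*(Y_n), \F_p) \cong E_2^{s,t}(\underline{Y})$. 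For step (ii), each $E_r^{s,t}(Y_n)$ is a subquotient of the finite-dimensional $E_2^{s,t}(Y_n)$ and hence again finite-dimensional, so Mittag-Leffler persists and cohomology commutes with $\lim_n$, yielding the spectral sequence structure inductively in $r$.

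For step (iii), I would identify $Y\sphat_p = \map(S^{-1}\!/p^\infty, \holim_n Y_n) \simeq \holim_n (Y_n)\sphat_p$, producing a Milnor short exact sequence $0 \to \lim^1_n \pi_{k+1}((Y_n)\sphat_p) \to \pi_k(Y\sphat_p) \to \lim_n \pi_k((Y_n)\sphat_p) \to 0$. Each $\pi_k((Y_n)\sphat_p)$ is finitely generated over the Noetherian ring $\widehat{\Z}_p$ by the bounded-below finite-type hypothesis, so descending chains of submodules stabilize and the inverse system has Mittag-Leffler; hence $\lim^1_n = 0$ and $\pi_*(Y\sphat_p) \cong \lim_n \pi_*((Y_n)\sphat_p)$. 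Setting $F^s \pi_*(Y\sphat_p) := \lim_n F^s \pi_*((Y_n)\sphat_p)$ and applying $\lim_n$ to the Adams filtration quotient sequences $0 \to F^{s+1} \pi_*((Y_n)\sphat_p) \to F^s \pi_*((Y_n)\sphat_p) \to E_\infty^{s,*}(Y_n) \to 0$, Mittag-Leffler on these finite-type subtowers identifies $F^s \pi_*(Y\sphat_p) / F^{s+1} \pi_*(Y\sphat_p) \cong E_\infty^{s,*}(\underline{Y})$.

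The main obstacle I anticipate is verifying the conditions for \emph{strong} (rather than merely conditional) convergence of the limit spectral sequence, namely Hausdorffness $\bigcap_s F^s \pi_*(Y\sphat_p) = 0$ and completeness $\pi_*(Y\sphat_p) \cong \lim_s \pi_*(Y\sphat_p)/F^s$. Hausdorffness reduces formally to the corresponding property for each $(Y_n)\sphat_p$ by interchanging $\bigcap_s$ with $\lim_n$: an element of the intersection is a compatible family whose coordinates all lie in $\bigcap_s F^s \pi_*((Y_n)\sphat_p) = 0$. Completeness requires interchanging $\lim_s$ with $\lim_n$, which I expect to justify using the finite-type Mittag-Leffler conditions assembled above; this is the precise step where the finite-type hypothesis must be deployed most carefully, and where this version sharpens the conditional convergence recorded in \cite{CMP1987}*{7.1}.
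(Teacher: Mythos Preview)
Steps (i) and (ii) are fine, since each $E_r^{s,t}(Y_n)$ is a finite $\F_p$-vector space. The genuine gap is in step (iii). You assert that because $\pi_k((Y_n)\sphat_p)$ is finitely generated over the Noetherian ring $\widehat\Z_p$, ``descending chains of submodules stabilize,'' and deduce Mittag--Leffler. But finitely generated modules over a Noetherian ring satisfy the \emph{ascending} chain condition, not the descending one: already in $\widehat\Z_p$ the chain $\widehat\Z_p \supset p\widehat\Z_p \supset p^2\widehat\Z_p \supset \cdots$ does not stabilize. So there is no reason for the images $\im\bigl(\pi_k((Y_m)\sphat_p) \to \pi_k((Y_n)\sphat_p)\bigr)$ to stabilize as $m \to -\infty$, and your argument for $\lim^1_n = 0$ on the homotopy towers and on the Adams-filtration subtowers $\{F^s\pi_*((Y_n)\sphat_p)\}_n$ collapses; the completeness verification you flag at the end fails for the same reason. (Separately, the assertion that $\pi_*((Y_n)\sphat_p)$ is finitely generated over $\widehat\Z_p$ under only the bounded-below, $H\F_p$-finite-type hypothesis is itself not immediate---it is essentially the improvement over \cite{CMP1987} being claimed---and would need its own argument.)

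The repair, and the paper's organizing idea, is to replace the Noetherian reasoning by a topological one. Since the Adams $E_2$- and $E_\infty$-terms for each $Y_n$ are degreewise finite and the Adams filtration is complete Hausdorff, each $\pi_k((Y_n)\sphat_p)$ is \emph{compact Hausdorff} in that topology, as is every group appearing in the $p$-completed exact couple. Pontryagin duality identifies compact Hausdorff abelian groups with the opposite of discrete abelian groups, where filtered colimits are exact; hence filtered inverse limits of compact Hausdorff abelian groups are exact, and every derived limit you need vanishes at once. With this in hand the paper also streamlines your step (iii): rather than verify filtration axioms by hand, it $p$-completes the tower of Adams resolutions, takes the inverse limit of the resulting exact couples---still an exact couple by the exactness just established---and reads off strong convergence from Boardman's criterion \cite{B1999}*{7.1}, namely conditional convergence plus $RE_\infty^{s,t} = \Rlim_r E_r^{s,t}(\underline{Y}) = 0$.
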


The difference between this statement and the statement in \cite{CMP1987}
lies in the hypothesis on the $Y_n$: we do not assume that $Y_n$ is
$p$-complete, and weaken the condition that $Y_n$ should be of finite
type over $\widehat\Z_p$ to the condition that $H_*(Y_n; \F_p)$ should
be of finite type.
We refer to the spectral sequence $\{E^{*,*}_r(\underline{Y})\}_r$ as
the \emph{inverse limit of Adams spectral sequences} associated to the
tower $\{Y_n\}_n$.

\begin{proof}
For any bounded below spectrum $B$ of finite type over~$\F_p$, the Adams
spectral sequence converges strongly to $\pi_*(B\sphat_p)$.  Since the
$E_2$- and $E_\infty$-terms of this spectral sequence are of finite type,
the abelian groups $\pi_*(B\sphat_p)$ are compact and Hausdorff in
the topology given by the Adams filtration.

The category of compact Hausdorff abelian groups is an abelian category,
as is the category of discrete abelian groups.  The Pontryagin duality
functor assigns to each abelian group $G$ its character group $\Hom(G,
\T)$, where $\T = S^1$ is the circle group.  It induces a contravariant
equivalence between the category of compact Hausdorff abelian groups and
the category of discrete abelian groups.  The functor taking a filtered
diagram of discrete abelian groups to its colimit is well-known to be
exact.  It follows that the functor taking a filtered diagram of compact
Hausdorff abelian groups to its inverse limit is also an exact functor.
In particular, passing to filtered inverse limits commutes with the
formation of kernels, images, cokernels and homology, in the abelian
category of compact Hausdorff abelian groups.

We now adapt the proof of \cite{CMP1987}*{7.1}, using this version of
the exactness of the inverse limit functor.  First, we construct a double
tower diagram of spectra
\begin{equation} \label{equ_towAdamsres}
\xymatrix{
\dots \ar[r] & Z_s \ar[r] \ar[d] & \dots \ar[r] & Z_0 = Y \ar[d] \\
& \vdots \ar[d] & & \vdots \ar[d] \\
\dots \ar[r] & Z_{n-1,s} \ar[r] \ar[d] & \dots \ar[r] & Z_{n-1,0} = Y_{n-1} \ar[d]^{f_n} \\
\dots \ar[r] & Z_{n,s} \ar[r] & \dots \ar[r] & Z_{n,0} = Y_n
}
\end{equation}
where the $n$-th row is an Adams resolution
of $Y_n$, such that each $Z_{n,s}$ is a bounded below spectrum of finite
type over~$\F_p$.  The $n$-th row can be obtained in a functorial way by
smashing $Y_n$ with a fixed Adams resolution for the sphere spectrum $S$.
The top row consists of the homotopy limits $Z_s = \holim_n Z_{n,s}$
for all $s\ge0$.  By assumption, the homotopy cofiber $K_{n,s}$ of the
map $Z_{n,s+1} \to Z_{n,s}$ is a wedge sum of suspended copies of the
Eilenberg--Mac\,Lane spectrum $H = H\F_p$, also bounded below and of finite
type over~$\F_p$.  The exact couple
$$
\xymatrix{
\pi_*(Z_{n,s+1}) \ar[r]^i & \pi_*(Z_{n,s}) \ar[d]^j \\
& \pi_*(K_{n,s}) \ar@{-->}[ul]^k
}
$$
generates the Adams spectral sequence $\{E_r^{*,*}(Y_n)\}_r$, with
$E_1^{s,t}(Y_n) = \pi_{t-s}(K_{n,s})$.  The dashed arrow has degree~$-1$.

Now consider the $p$-completion of diagram~\eqref{equ_towAdamsres}.
The $n$-th row becomes
\begin{equation} \label{equ_nthrowp}
\dots \to (Z_{n,s})\sphat_p \to \dots \to (Z_{0,s})\sphat_p =
	(Y_n)\sphat_p
\end{equation}
and the homotopy cofiber of $(Z_{n,s+1})\sphat_p \to (Z_{n,s})\sphat_p$
is the $p$-completion of $K_{n,s}$, which is just $K_{n,s}$ again.
There is therefore a second exact couple
\begin{equation} \label{equ_excplnp}
\xymatrix{
\pi_*((Z_{n,s+1})\sphat_p) \ar[r]^i & \pi_*((Z_{n,s})\sphat_p) \ar[d]^j \\
& \pi_*(K_{n,s}) \ar@{-->}[ul]^k
}
\end{equation}
for each $n$, which generates the same spectral sequence as the first
one.  Furthermore, in the second exact couple all the (abelian) homotopy
groups are compact Hausdorff, since the spectra $Z_{n,s}$ and $K_{n,s}$
are all bounded below and of finite type over~$\F_p$.

The $p$-completion of the top row in~\eqref{equ_towAdamsres}
is the homotopy inverse limit over~$n$ of the $p$-completed
rows~\eqref{equ_nthrowp}.  The exactness of filtered limits in
the category of compact Hausdorff abelian groups now implies
that there are isomorphisms $\pi_*((Z_s)\sphat_p) \cong \lim_n
\pi_*((Z_{n,s})\sphat_p)$ for all $s$.  Furthermore, the inverse limit
over $n$ of the exact couples~\eqref{equ_excplnp} defines a third exact
couple (of compact Hausdorff abelian groups)
\begin{equation} \label{equ_excpllim}
\xymatrix{
\pi_*((Z_{s+1})\sphat_p) \ar[r]^i & \pi_*((Z_s)\sphat_p) \ar[d]^j \\
& \displaystyle{\lim_n} \, \pi_*(K_{n,s}) \ar@{-->}[ul]^k \rlap{\,,}
}
\end{equation}
which generates the spectral sequence $\{E_r^{*,*}(\underline{Y})\}_r$
that we are after.  Here $E_1^{s,t}(\underline{Y}) \cong \lim_n
E_1^{s,t}(Y_n)$, and by induction on $r$ the same isomorphism holds for
each $E_r$-term, since $E_{r+1}^{*,*}$ is the homology of $E_r^{*,*}$
with respect to the $d_r$-differentials, and we have seen that the
formation of these limits commutes with homology.  In particular, each
abelian group $E_r^{s,t}(\underline{Y})$ is compact Hausdorff.

The identification of the $E_2$-term for $s=0$ amounts to the
isomorphism
$$
\lim_n \Hom_{\A}(H^*(Y_n), N) \cong
\Hom_{\A}(\colim_n H^*(Y_n), N)
$$
for $N = \Sigma^t \F_p$.  The general case follows, since we can compute
$\Ext^s_{\A}$ by means of an injective resolution of $\F_p$.

We must now check the convergence of this spectral sequence, which we
(and \cite{CMP1987}) do following Boardman \cite{B1999}.  The Adams
resolution for each $Y_n$ is constructed so that
$$
\lim_s \pi_*((Z_{n,s})\sphat_p) = \Rlim_s \pi_*((Z_{n,s})\sphat_p) = 0
\,.
$$
These two conditions ensure that the Adams spectral sequence for $Y_n$
converges conditionally \cite{B1999}*{5.10}.  The standard interchange of
limits isomorphism gives
$$
\lim_s \pi_*((Z_s)\sphat_p) \cong \lim_n \lim_s \pi_*((Z_{n,s})\sphat_p)
= 0 \,.
$$
Moreover, the exactness of the inverse limit functor in this case implies
that the derived limit
$$
\Rlim_s \pi_*((Z_s)\sphat_p) = 0
$$
vanishes, too.  Hence the inverse limit Adams spectral sequence
generated by~\eqref{equ_excpllim} is conditionally convergent to
$\pi_*(Y\sphat_p)$.  This is a half-plane spectral sequence
with entering differentials, in the sense of Boardman.
For such spectral sequences, strong convergence follows
from conditional convergence together with the vanishing of the groups
$$
RE^{s,t}_\infty = \Rlim_r E_r^{s,t}(\underline{Y}) \,,
$$
see \cite{B1999}*{5.1, 7.1}.  Again, the vanishing of this $\Rlim$ is
ensured by the exactness of $\lim$ for the compact Hausdorff abelian
groups $E_r^{s,t}(\underline{Y})$.
\end{proof}

\subsection{Continuous (co-)homology} \label{sec_duality}
The spectral sequence in Proposition~\ref{prop_CMPss} is central to the
proof of the Segal conjecture for groups of prime order and will be the
foundation for the present work.  Our work will, in analogy with Lin's
proof of the Segal conjecture, focus on the properties
of the $E_2$-term of the above spectral sequence.

\begin{dfn} \label{dfn_continuous}
Let $\{Y_n\}_n$ be a tower of spectra such that each $Y_n$ is bounded
below and of finite type over~$\F_p$, and let $Y = \holim_n Y_n$.
Define the \emph{continuous cohomology} of $Y$ as the colimit
$$
H_c^*(Y) = \colim_{n\to-\infty} H^*(Y_n) \,.
$$
Dually, define the \emph{continuous homology} of $Y$ as the inverse limit
$$
H_*^c(Y) = \lim_{n\to-\infty} H_*(Y_n) \,.
$$
\end{dfn}

Note that we choose to suppress from the notation the tower of which $Y$
is a homotopy inverse limit, even if the continuous cohomology groups
do depend on the choice of inverse system.  For example, let $p=2$
and let $Y=S\sphat_2$ be the $2$-completed sphere spectrum.  Since $Y$
is bounded below and of finite type over $\F_2$, we may express $Y$ by
the constant tower of spectra.  But by W.~H.~Lin's theorem, $S\sphat_2
\simeq \holim_n \Sigma \R P^\infty_n$, where each $\Sigma\R
P^\infty_n$ is also bounded below and of finite type over $\F_2$.
Now $\colim_n H^*(\Sigma \R P^\infty_n) = \Sigma P(x, x^{-1}) = R_+(\F_2)$
is much larger than $H^*(S\sphat_2) = \F_2$.

By the universal coefficient theorem and our finite type assumptions, the
$\F_p$-linear dual of $H^*_c(Y)$ is naturally isomorphic to $H_*^c(Y)$.
The continuous homology of $Y$ will often not be of finite type, so
its dual is in general not isomorphic to the continuous cohomology.
However, if we take into account the linear topology on the inverse limit,
given by the kernel filtration induced from the tower, we do get that the
continuous dual of the continuous homology is isomorphic to the continuous
cohomology.  We discuss this in \textsection \ref{subsec_dualization}.

Note that the continuous cohomology is a direct limit of bounded below
$\A$-modules.  The direct limit might of course not be bounded below,
but we do get a natural $\A$-module structure on $H^*_c(Y)$ in the
category of all $\A$-modules.  Dually, the continuous homology is an
inverse limit of bounded below $\A_*$-comodules, but the inverse limit
might be neither bounded below nor an $\A_*$-comodule in the usual,
algebraic, sense.  Instead we get a completed coaction of $\A_*$
$$
H_*^c(Y) \to \A_* \ctensor H_*^c(Y) \,,
$$
where $\ctensor$ is the tensor product completed with respect to the
above-mentioned linear topology on the continuous homology.  We discuss
this in \textsection \ref{subsec_limitsofcomodules}.

\subsection{Filtrations}
For every $n\in \Z$, let $A^n$ be a graded $\F_p$-vector space and assume
that these vector spaces fit into a sequence
\begin{equation} \label{equ_cohomtower}
  0 \longto \cdots \longto A^n \longto A^{n-1} \longto \cdots \longto
  A^{-\infty}
\end{equation}
with trivial inverse limit, and colimit denoted by $A^{-\infty}$.
We assume further that each $A^n$ is of finite type.  Let $A_n =
\Hom(A^n, \F_p)$ be the dual of $A^n$.  The diagram
above dualizes to a sequence
\begin{equation} \label{equ_homtower}
  A_{-\infty} \longto \cdots \longto A_{n-1} \longto A_n \longto \cdots
  \longto 0
\end{equation}
with inverse limit
$$
A_{-\infty} = \lim_n A_n = \lim_n \Hom(A^n, \F_p)
\cong \Hom(\colim_n A^n, \F_p) = \Hom(A^{-\infty}, \F_p)
$$
isomorphic to the dual of $A^{-\infty}$, and trivial colimit.  The last
fact follows from the assumption that $A^n$ is finite dimensional in
each degree.  Indeed,
$$
  \lim_n A^n \cong \lim_n \Hom(A_n,\F_p) \cong
	\Hom(\colim_n A_n,\F_p)
$$
and thus $\lim_n A^n=0$ implies that $\colim_n A_n$ is trivial, since
the latter injects into its double dual.  Furthermore,
the derived inverse limits $\Rlim_n A^n$ and $\Rlim_n A_n$ are zero,
again because $A^n$ and $A_n$ are degreewise finite.

Adapting Boardman's notation \cite{B1999}*{5.4}, we define filtrations
of the colimit of~\eqref{equ_cohomtower} and the inverse limit of
\eqref{equ_homtower} using the corresponding sequential limit systems.

\begin{dfn} \label{def_filtr111}
For each $n \in \Z$, let
$$
F^nA^{-\infty} = \im(A^{n} \to A^{-\infty})
$$
and
$$
F_nA_{-\infty} = \ker(A_{-\infty} \to A_n) \,.
$$
Then
\begin{equation} \label{equ_cohomfiltration}
  \cdots \subset F^nA^{-\infty} \subset F^{n-1}A^{-\infty} \subset \cdots
  \subset A^{-\infty}
\end{equation}
and
\begin{equation} \label{equ_homfiltration}
   \cdots\subset F_{n-1}A_{-\infty}\subset F_nA_{-\infty}\subset \cdots
   \subset A_{-\infty}
\end{equation}
define a decreasing (resp.~increasing) sequence of subspaces of
$A^{-\infty}$ (resp.~$A_{-\infty}$).
\end{dfn}

The filtration~\eqref{equ_cohomfiltration} clearly exhausts
$A^{-\infty}$.  Since each $A^n$ and $\ker(A^n \to A^{-\infty})$ is
of finite type, the right derived limits $\Rlim_n A^n$ and $\Rlim_n
\ker(A^n \to A^{-\infty})$ are both zero.  By assumption $\lim_n A^n =
0$, hence both $\lim_n F^nA^{-\infty}$ and $\Rlim_n F^nA^{-\infty}$
vanish.  In other words, the filtration~\eqref{equ_cohomfiltration} is
Hausdorff and complete, so that the canonical map $A^{-\infty} \to \lim_n
(A^{-\infty}/F^nA^{-\infty})$ is an isomorphism.  Completeness is equivalent
to saying that Cauchy sequences converge in the linear topology given by
the filtration.  That the filtration is Hausdorff is saying that Cauchy
sequences have unique limits.

For the filtration~\eqref{equ_homfiltration}, the proof of
\cite{B1999}*{5.4(b)}) shows, without any hypotheses, that the
filtration is Hausdorff and complete.  It also shows that the filtration
is exhaustive, since the colimit of~\eqref{equ_homtower} is trivial.
We collect these facts in the following lemma.

\begin{lemma} \label{lem_filtration}
Assume that the inverse limit $\lim_n A^n$ in~\eqref{equ_cohomtower}
is trivial and that each $A^n$ is of finite type.  Then both filtrations
given in Definition~\ref{def_filtr111} (of $\colim_n A^n = A^{-\infty}$
resp.~its dual $A_{-\infty}$) are exhaustive, Hausdorff
and complete.
\end{lemma}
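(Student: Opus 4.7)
The plan is to treat the two filtrations separately, since the cohomology filtration requires the $\lim/\Rlim$ machinery while the homology filtration is essentially formal.

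For the cohomology filtration $F^n A^{-\infty} = \im(A^n \to A^{-\infty})$, exhaustiveness is immediate: every element of $A^{-\infty} = \colim_n A^n$ comes from some $A^n$. For Hausdorffness and completeness, set $K^n = \ker(A^n \to A^{-\infty})$ and consider the short exact sequence of towers
$$0 \longto K^n \longto A^n \longto F^n A^{-\infty} \longto 0.$$
Each $A^n$, and hence each submodule $K^n$, is degreewise finite-dimensional; such towers satisfy Mittag--Leffler trivially, so $\Rlim_n A^n = \Rlim_n K^n = 0$. Combined with the hypothesis $\lim_n A^n = 0$, the six-term $\lim/\Rlim$ exact sequence then forces $\lim_n F^n A^{-\infty} = 0$ and $\Rlim_n F^n A^{-\infty} = 0$. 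Applying $\lim/\Rlim$ to the further short exact sequence
$$0 \longto F^n A^{-\infty} \longto A^{-\infty} \longto A^{-\infty}/F^n A^{-\infty} \longto 0$$
(whose middle term is constant in $n$, with vanishing $\Rlim$) will then yield the isomorphism $A^{-\infty} \cong \lim_n A^{-\infty}/F^n A^{-\infty}$, which encodes both Hausdorffness (injectivity, equivalent to $\bigcap_n F^n A^{-\infty} = 0$) and completeness.

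For the homology filtration $F_n A_{-\infty} = \ker(A_{-\infty} \to A_n)$, the key observation is that a compatible family $(x_m) \in A_{-\infty}$ lies in $F_n$ iff its $n$-th coordinate $x_n$ vanishes, which by forward compatibility forces $x_m = 0$ for all $m \geq n$. Hausdorffness is then the tautology that an element whose every coordinate vanishes is zero. Completeness follows because $A_{-\infty}/F_n A_{-\infty} = \im(A_{-\infty} \to A_n) \subset A_n$, so that $\lim_n A_{-\infty}/F_n A_{-\infty}$ embeds into $\lim_n A_n = A_{-\infty}$ and the composite $A_{-\infty} \to \lim_n A_{-\infty}/F_n A_{-\infty} \to A_{-\infty}$ is the identity, forcing both maps to be isomorphisms. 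Exhaustiveness is the only step invoking the hypotheses: as noted in the text just before the lemma, $\lim_n A^n = 0$ together with degreewise finite-dimensionality gives $\colim_n A_n = 0$ by double-duality. Given a homogeneous $(x_m) \in A_{-\infty}$ of some degree $d$, applying this vanishing to $x_0 \in A_0^d$ produces an $s \geq 0$ with the image of $x_0$ in $A_s^d$ equal to zero; since this image is precisely $x_s$, we conclude $(x_m) \in F_s A_{-\infty}$.

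The only substantive obstacle is the cohomology case, where the finite type hypothesis is essential to ensure the relevant $\Rlim$ terms vanish and so activate the $\lim/\Rlim$ six-term sequence; the homology case reduces to bookkeeping with the definitions of inverse limit and kernel filtration.
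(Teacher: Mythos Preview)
Your proof is correct and follows essentially the same route as the paper. For the cohomology filtration you reproduce the paper's argument in slightly more detail (the paper also uses that $A^n$ and $\ker(A^n\to A^{-\infty})$ are of finite type to kill the relevant $\Rlim$ terms); for the homology filtration the paper simply cites \cite{B1999}*{5.4(b)}, whereas you unwind that citation into the direct coordinate argument, which is exactly what Boardman's proof amounts to.
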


\subsection{Dualization} \label{subsec_dualization}
The dual of the inverse limit $A_{-\infty}$ of
\eqref{equ_homtower} is the double dual of the colimit $A^{-\infty}$
of~\eqref{equ_cohomtower}.  It contains this colimit in a canonical
way, but is often strictly bigger, since $A^{-\infty}$ needs not be of
finite type.  To remedy this, we take into account the linear topology
on the limit induced by the inverse system, and dualize by considering
the continuous $\F_p$-linear dual.

In this topology on $A_{-\infty}$, an open neighborhood basis of the
origin is given by the collection of subspaces $\{F_nA_{-\infty}\}_n$.
A continuous homomorphism $A_{-\infty} \to \F_p$ is thus an $\F_p$-linear
function whose kernel contains $F_nA_{-\infty}$ for some $n$.  The set
of these forms an $\F_p$-vector space $\Hom^c(A_{-\infty}, \F_p)$,
which we call the \emph{continuous dual} of $A_{-\infty}$.

\begin{lemma} \label{lem_contdual}
There is a natural isomorphism
$$
  \Hom(A^{-\infty}, \F_p) \cong A_{-\infty}\,.
$$
Give $A_{-\infty}$ the linear topology induced by the system of
neighborhoods $\{F_nA_{-\infty}\}_n$.  Then there is a natural isomorphism
$$
  \Hom^c(A_{-\infty}, \F_p) \cong A^{-\infty}\,.
$$
\end{lemma}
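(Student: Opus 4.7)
The plan is to prove both isomorphisms via explicit natural evaluation maps; the finite-type hypothesis on the $A^n$ enters the second one essentially in a single step.

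The first isomorphism is immediate from the hom-colimit adjunction together with the definition $A_n = \Hom(A^n, \F_p)$:
$$
\Hom(A^{-\infty}, \F_p) = \Hom(\colim_n A^n, \F_p) \cong \lim_n \Hom(A^n, \F_p) = \lim_n A_n = A_{-\infty}.
$$

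For the second isomorphism, I would define a natural map $\eta \: A^{-\infty} \to \Hom^c(A_{-\infty}, \F_p)$ as follows. Given $a \in A^{-\infty}$ with lift $\tilde a \in A^n$, use the first isomorphism to regard $\phi \in A_{-\infty}$ as a linear functional on $A^{-\infty}$, and set $\eta(a)(\phi) = \phi(a)$. For continuity, observe that $F_n A_{-\infty} = \ker(A_{-\infty} \to A_n)$ consists precisely of those $\phi$ whose restriction along $A^n \to A^{-\infty}$ vanishes; since $a$ lies in the image of $A^n \to A^{-\infty}$, the functional $\eta(a)$ kills $F_n A_{-\infty}$ and is therefore continuous.

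Injectivity of $\eta$ is automatic, because it factors through the canonical (always injective) inclusion $A^{-\infty} \hookrightarrow (A^{-\infty})^{**}$ composed with the first isomorphism on the inside. For surjectivity---the substantive point---take $\psi \in \Hom^c(A_{-\infty}, \F_p)$ vanishing on some $F_n A_{-\infty}$, so that $\psi$ factors through the quotient $A_{-\infty}/F_n A_{-\infty} \cong \im(A_{-\infty} \to A_n) \subseteq A_n$. The main step is to extend the induced $\F_p$-linear functional on this subspace to all of $A_n$, producing an element $a' \in \Hom(A_n, \F_p) \cong A^n$ whose image $a$ in $A^{-\infty}$ satisfies $\eta(a) = \psi$ after unwinding the identifications.

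The only non-formal move is this extension step, and it is where the finite-type hypothesis is used essentially: in each graded degree $A_n$ is a finite-dimensional $\F_p$-vector space, so every linear functional defined on a subspace extends to the whole space degreewise. Everything else in the argument is either adjointness, the universal injectivity of $V \to V^{**}$, or a direct unwinding of the definitions of $F_n A_{-\infty}$ and of continuity for the linear topology.
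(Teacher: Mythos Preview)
Your argument is correct and follows essentially the same strategy as the paper. One small correction: the extension of a linear functional from a subspace to the whole space works over any field without a finiteness hypothesis (this is just the injectivity of $\F_p$ as an $\F_p$-module). The place where finite type is genuinely needed in your argument is the identification $\Hom(A_n,\F_p)\cong A^n$, i.e., double duality for $A^n$.

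The paper organizes the second isomorphism slightly differently and avoids the extension step altogether. It writes
\[
\Hom^c(A_{-\infty},\F_p)\cong \colim_n \Hom(A_{-\infty}/F_nA_{-\infty},\F_p),
\]
identifies $A_{-\infty}/F_nA_{-\infty}$ directly as the dual of $F^nA^{-\infty}=\im(A^n\to A^{-\infty})$, and then invokes double duality for the finite-type object $F^nA^{-\infty}$ to get $\Hom(A_{-\infty}/F_nA_{-\infty},\F_p)\cong F^nA^{-\infty}$; passing to the colimit gives $A^{-\infty}$. Your detour through $A^n$ plus an extension reaches the same conclusion, but working at the level of the images $F^nA^{-\infty}$ makes the compatibility in $n$ (and hence naturality of the isomorphism) transparent without having to check that the choice of extension does not matter.
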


\begin{proof}
The first isomorphism has already been explained.
For the second, we wish to compute
$$
  \Hom^c(A_{-\infty}, \F_p) \cong
	\colim_n \Hom(A_{-\infty}/F_nA_{-\infty}, \F_p) \,.
$$
The dual of the image $F^nA^{-\infty} = \im(A^n \to A^{-\infty})$
is the image
\begin{equation} \label{equ_fna8dual}
\Hom(F^nA^{-\infty}, \F_p) \cong \im(A_{-\infty} \to A_n)
	\cong A_{-\infty}/F_nA_{-\infty} \,,
\end{equation}
and $F^nA^{-\infty}$ is of finite type, so the canonical homomorphism
$$
F^nA^{-\infty} \overset{\cong}{\to} \Hom(A_{-\infty}/F_nA_{-\infty}, \F_p)
$$
into its double dual is an isomorphism.  Passing to the colimit as $n
\to -\infty$ we get the desired isomorphism, since
$\colim_n F^nA^{-\infty} \cong A^{-\infty}$.
\end{proof}

\subsection{Limits of $\A_*$-comodules} \label{subsec_limitsofcomodules}
Until now, the objects of our discussion have been graded vector
spaces over~$\F_p$.  We will now add more structure, and assume that
\eqref{equ_cohomtower} is a diagram of modules over the Steenrod
algebra $\A$.  It follows that the finite terms $A_n$ in the dual tower
\eqref{equ_homtower} are comodules under the dual Steenrod algebra~$\A_*$.
We need to discuss in what sense these comodule structures carry over
to the inverse limit $A_{-\infty}$.

Let $M_*$ be a graded vector space, with a linear topology given by a
system $\{U_\alpha\}_\alpha$ of open neighborhoods, with each $U_\alpha$ a
graded subspace of $M_*$.  We say that $M_*$ is \emph{complete Hausdorff}
if the canonical homomorphism $M_* \overset{\cong}\to \lim_\alpha
(M_*/U_\alpha)$ is an isomorphism.
Let $V_*$ be a graded vector space, bounded below and given the discrete
topology.  By the \emph{completed tensor product} $V_* \ctensor M_*$
we mean the limit $\lim_\alpha (V_* \otimes (M_*/U_\alpha))$, with the
linear topology given by the kernels of the surjections $V_* \ctensor
M_* \to V_* \otimes (M_*/U_\alpha)$.  The completed tensor product is
complete Hausdorff by construction.  Given a second graded vector space
$W_*$, discrete and bounded below, there is a canonical isomorphism $(V_*
\otimes W_*) \ctensor M_* \cong V_* \ctensor (W_* \ctensor M_*)$.

\begin{dfn} \label{dfn_completecomodule}
Let $M_*$ be a complete Hausdorff graded $\F_p$-vector space.  We say that
$M_*$ is a \emph{complete $\A_*$-comodule} if there is a continuous graded
homomorphism $\nu \: M_* \to \A_* \ctensor M_*$ such that the diagrams
\[
\xymatrix{
M_* \ar[r]^-\nu \ar[dr]_{\cong} & \A_* \ctensor M_*
	\ar[d]^{\epsilon\ctensor1} \\
& \F_p \ctensor M_*
}
\]
and
\begin{equation} \label{equ_contcomod}
\xymatrix{
M_* \ar[r]^-\nu \ar[d]_\nu & \A_* \ctensor M_* \ar[dr]^{\psi\ctensor 1} \\
\A_* \ctensor M_* \ar[r]^-{1\ctensor \nu} &
  \A_* \ctensor (\A_* \ctensor M_*) \ar[r]^\cong &
  (\A_* \otimes \A_*) \ctensor M_*
}
\end{equation}
commute.  Here $\epsilon \: \A_* \to \F_p$ and $\psi \: \A_* \to \A_*
\otimes \A_*$ denote the counit and coproduct in the dual Steenrod
algebra, respectively.  Let $N_*$ be another complete $\A_*$-comodule
and let $f \: N_* \to M_*$ be a continuous graded homomorphism.  Then $f
\in \Hom_{\A_*}^c(N_*, M_*)$ if the diagram
$$
\xymatrix{
N_* \ar[r]^-\nu \ar[d]_f & \A_* \ctensor N_* \ar[d]^{1 \ctensor f} \\
M_* \ar[r]^-\nu & \A_* \ctensor M_*
}
$$
commutes.  Hence there is an equalizer diagram
\begin{equation} \label{equ_homaequalizer}
\xymatrix{
\Hom_{\A_*}^c(N_*, M_*) \to \Hom^c(N_*, M_*)
\ar@<0.6ex>[rr]^-{f\mapsto (1\ctensor f) \circ \nu}
\ar@<-0.6ex>[rr]_-{f\mapsto \nu\circ f} &&
\Hom^c(N_*, \A_* \ctensor M_*)\,.
}
\end{equation}
\end{dfn}

\begin{lemma}
Suppose given a sequence of graded $\F_p$-vector spaces, as in
\eqref{equ_cohomtower}, with each $A^n$ bounded below and of finite type.
Suppose also that $A^n$ is an $\A$-module and that $A^n \to A^{n-1}$
is $\A$-linear, for each finite $n$.  Then, with notation as above,
$A^{-\infty}$ is an $\A$-module, and the topological $\F_p$-vector space
$A_{-\infty}$ is a complete $\A_*$-comodule.
\end{lemma}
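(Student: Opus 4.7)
The first assertion is immediate: colimits of $\A$-modules along $\A$-linear maps inherit a canonical $\A$-module structure in the category of all $\A$-modules, so $A^{-\infty} = \colim_n A^n$ is an $\A$-module.

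For the second assertion, my plan is to exhibit $A_{-\infty}$ as the inverse limit of a tower of ordinary (discrete, degreewise finite) $\A_*$-comodules, and then to define $\nu$ as the limit of the resulting finite-level coactions. Since each $A^n$ is bounded below and of finite type, the linear dual $A_n = \Hom(A^n, \F_p)$ is an $\A_*$-comodule in the usual sense, and the dualized tower maps are $\A_*$-colinear. Using the isomorphism~\eqref{equ_fna8dual}, the finite quotient $A_{-\infty}/F_n A_{-\infty}$ is identified with $\Hom(F^n A^{-\infty}, \F_p)$, which is a sub-$\A_*$-comodule of $A_n$, being dual to the quotient $\A$-module $A^n \twoheadrightarrow F^n A^{-\infty}$. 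The surjections $A_{-\infty}/F_n A_{-\infty} \twoheadrightarrow A_{-\infty}/F_{n'} A_{-\infty}$ for $n \le n'$ are $\A_*$-colinear, since they are dual to the $\A$-linear inclusions $F^{n'} A^{-\infty} \hookrightarrow F^n A^{-\infty}$. By Lemma~\ref{lem_filtration}, $A_{-\infty} \cong \lim_n A_{-\infty}/F_n A_{-\infty}$, while $\A_* \ctensor A_{-\infty} = \lim_n (\A_* \otimes A_{-\infty}/F_n A_{-\infty})$ by the very definition of the completed tensor product (using that $\A_*$ is bounded below and discrete). I then define $\nu\: A_{-\infty} \to \A_* \ctensor A_{-\infty}$ as the limit of the finite-level coactions; continuity is automatic, because $\nu$ factors through $A_{-\infty}/F_n A_{-\infty}$ for each~$n$.

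It remains to check the counit and coassociativity diagrams of Definition~\ref{dfn_completecomodule}. Both commute at every finite level $A_{-\infty}/F_n A_{-\infty}$, where they reduce to the standard $\A_*$-comodule axioms on an honest subcomodule of $A_n$. The main obstacle is the coassociativity square~\eqref{equ_contcomod}: one must identify $\A_* \ctensor (\A_* \ctensor A_{-\infty})$ with $(\A_* \otimes \A_*) \ctensor A_{-\infty}$ by means of the canonical isomorphism $(V_* \otimes W_*) \ctensor M_* \cong V_* \ctensor (W_* \ctensor M_*)$ recorded in~\textsection\ref{subsec_limitsofcomodules}, and then recognize both composites in~\eqref{equ_contcomod} as the inverse limit over $n$ of the ordinary coassociativity diagrams on the quotients. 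Once that identification is in place, commutativity follows by passage to the limit, and the counit diagram is treated by the same but simpler argument.
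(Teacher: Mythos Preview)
Your argument is correct and is in the same spirit as the paper's, but you make a cleaner choice of tower that lets you sidestep a technical step. The paper defines $\nu$ as the inverse limit of the coactions $\nu_n \: A_n \to \A_* \otimes A_n$ on the original tower $\{A_n\}$, and must then verify that the natural map
\[
\A_* \ctensor A_{-\infty} \longrightarrow \lim_n (\A_* \otimes A_n)
\]
is an isomorphism. Injectivity is clear, but for surjectivity the paper introduces the cokernels $Z_n = \mathrm{coker}\bigl(A_{-\infty}/F_nA_{-\infty} \hookrightarrow A_n\bigr)$ and argues that $\lim_n(\A_*\otimes Z_n)=0$ via the embedding $\A_*\otimes Z_n \hookrightarrow \Hom(\A,Z_n)$. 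You instead work directly with the image tower $\{A_{-\infty}/F_nA_{-\infty}\}$, whose terms are genuine sub-$\A_*$-comodules of $A_n$ (being dual to the bounded-below, finite-type quotient $\A$-modules $F^nA^{-\infty}$); since this tower is exactly the one defining the linear topology on $A_{-\infty}$, the target of $\lim_n\nu_n'$ is $\A_*\ctensor A_{-\infty}$ \emph{by definition}, and no $Z_n$-argument is needed. What the paper's extra step buys is the explicit identification $\A_*\ctensor A_{-\infty} \cong \Hom(\A\otimes A^{-\infty},\F_p)$, tying $\nu$ directly to the dual of the $\A$-action $\lambda$ on $A^{-\infty}$; your route is shorter but does not make this link explicit.
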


\begin{proof}
The category of $\A$-modules is closed under direct limits, so the first
claim of the lemma is immediate.  For each $n$ we get a commutative diagram
\[
\xymatrix{
\A \otimes A^n \ar@{->>}[r] \ar[d]_{\lambda^n} &
\A \otimes F^nA^{-\infty} \ar@{ >->}[r] \ar[d] &
\A \otimes A^{-\infty} \ar[d]^\lambda \\
A^n \ar@{->>}[r]  &
F^nA^{-\infty} \ar@{ >->}[r]  &
A^{-\infty} \rlap{\,,}
}
\]
where the vertical arrows are the $\A$-module action maps.
For every finite $n$, the dual of the $\A$-module action
map $\lambda^n \: \A \otimes A^n \to A^n$ defines an $\A_*$-comodule coaction map
$\nu_n \: A_n = \Hom(A^n, \F_p) \to \Hom(\A \otimes A^n, \F_p)
	\cong \Hom(\A, \F_p) \otimes \Hom(A^n, \F_p) = \A_* \otimes A_n$,
where the middle isomorphism uses that $\A$ and $A^n$ are bounded
below and of finite type over~$\F_p$.
Similarly, the dual of the diagram above gives a commutative
diagram
\[
\xymatrix{
\A_* \otimes A_n & \A_* \otimes A_{-\infty}/F_nA_{-\infty} \ar@{ >->}[l] &
	\Hom(\A \otimes A^{-\infty}, \F_p) \ar@{->>}[l] \\
A_n \ar[u]^{\nu_n} & A_{-\infty}/F_nA_{-\infty} \ar@{ >->}[l] \ar[u] &
	A_{-\infty} \ar@{->>}[l] \ar[u] \rlap{\,,}
}
\]
where we use the identification from~\eqref{equ_fna8dual}.  Passing
to limits over $n$, we get the diagram
\[
\xymatrix{
\displaystyle{\lim_n} \, (\A_* \otimes A_n) & \A_* \ctensor A_{-\infty}
\ar[l]_-\cong & \Hom(\A \otimes A^{-\infty}, \F_p) \ar[l]_-\cong \\
\displaystyle{\lim_n} \, A_n \ar[u]^{\lim_n \nu_n} & A_{-\infty}
\ar[l]_-\cong \ar[u]_\nu & A_{-\infty} \ar[l]_-{=}
	\ar[u]_{\Hom(\lambda, \F_p)} \rlap{\,.}
}
\]
The middle vertical coaction map $\nu$ is continuous, as it is realized as
the inverse limit of a homomorphism of towers.  It is clear that
the upper left hand horizontal map is injective, but we claim that it is
also surjective.

To see this, let $Z_n$ be the cokernel of $A_{-\infty}/F_nA_{-\infty}
\into A_n$.  We know from Lemma~\ref{lem_filtration} that $\lim_n
(A_{-\infty}/F_nA_{-\infty}) \cong \lim_n A_n$ and $\Rlim_n
(A_{-\infty}/F_nA_{-\infty}) = 0$, so $\lim_n Z_n = 0$.  This implies that
$\lim_n (\A_* \otimes Z_n) = 0$, since there are natural injective maps
$\A_* \otimes Z_n \into \Hom(\A, Z_n)$, and $\lim_n (\A_* \otimes Z_n)
\into \lim_n \Hom(\A, Z_n) \cong \Hom(\A, \lim_n Z_n) = 0$.  Now $\A_*
\otimes Z_n$ is the cokernel of $\A_* \otimes A_{-\infty}/F_nA_{-\infty}
\into \A_* \otimes A_n$, hence in the limit $\A_* \ctensor A_{-\infty}
\to \lim_n (\A_* \otimes A_n)$ is surjective.

The commutativity of the diagrams in Definition~\ref{dfn_completecomodule}
is immediate since they are obtained as the inverse limits of the
corresponding diagrams involving $A_n$ and $\nu_n$.  Thus $A_{-\infty}$
is a complete $\A_*$-comodule.
\end{proof}

\begin{cor} \label{cor_HcY}
Let $\{Y_n\}_n$ be a tower of spectra as in~\eqref{equ_tower}, each
bounded below and of finite type over $\F_p$, with homotopy limit $Y$.
Then the continuous cohomology $H_c^*(Y) = \colim_n H^*(Y_n)$ is an
$\A$-module, the continuous homology $H^c_*(Y) = \lim_n H_*(Y_n)$
is a complete $\A_*$-comodule, and there are natural isomorphisms
$\Hom(H_c^*(Y), \F_p) \cong H^c_*(Y)$ and $\Hom^c(H^c_*(Y), \F_p)
\cong H_c^*(Y)$, in the respective categories.
\qed
\end{cor}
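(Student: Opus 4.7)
The plan is to apply the preceding (unlabelled) lemma together with Lemma~\ref{lem_contdual} directly to the sequence $A^n = H^*(Y_n)$ of mod~$p$ cohomology groups, equipped with the $\A$-linear transition maps $f_n^* \: H^*(Y_n) \to H^*(Y_{n-1})$ induced by the tower maps $f_n \: Y_{n-1} \to Y_n$. Each hypothesis of that lemma translates directly from the hypotheses of the corollary: since $Y_n$ is bounded below and of finite type over~$\F_p$, the cohomology $A^n = H^*(Y_n)$ is bounded below and of finite type, and the universal coefficient theorem identifies its $\F_p$-linear dual with $A_n = H_*(Y_n)$.

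Under these identifications, the continuous cohomology of Definition~\ref{dfn_continuous} coincides with the colimit $A^{-\infty}$ of~\eqref{equ_cohomtower}, and the continuous homology with the inverse limit $A_{-\infty}$ of~\eqref{equ_homtower}. The preceding lemma then gives the first two claims of the corollary verbatim: $H_c^*(Y) = A^{-\infty}$ is an $\A$-module as a colimit of $\A$-modules, while $H_*^c(Y) = A_{-\infty}$ is a complete $\A_*$-comodule with respect to the linear topology generated by the kernel filtration $F_n H_*^c(Y) = \ker(H_*^c(Y) \to H_*(Y_n))$ of Definition~\ref{def_filtr111}.

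The two natural isomorphisms are then read off Lemma~\ref{lem_contdual}: the first, $\Hom(H_c^*(Y), \F_p) \cong H_*^c(Y)$, is the interchange of $\Hom$ with colimit combined with the finite-type levelwise duality $\Hom(H^*(Y_n), \F_p) \cong H_*(Y_n)$; the second, $\Hom^c(H_*^c(Y), \F_p) \cong H_c^*(Y)$, is precisely the continuous-dual statement proved there. That both isomorphisms respect the respective $\A$-module and complete $\A_*$-comodule structures, so that they live in the ``respective categories'', is already packaged in the preceding lemma, where the coaction $\nu$ is constructed as the inverse limit of the maps $\nu_n$ that are levelwise dual to the action maps $\lambda^n \: \A \otimes H^*(Y_n) \to H^*(Y_n)$; the duality intertwines these levelwise and hence in the limit, via the final commutative diagram recorded in that proof.

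The one subtle point to monitor is the triviality of $\lim_n H^*(Y_n)$ required by the filtration setup of Lemma~\ref{lem_filtration}; this is not forced by the corollary's bare hypotheses, but holds for all the towers that appear in the sequel, including the Tate tower $\{X^{tG}[n]\}_n$ of Section~\ref{sec_tate}, whose spectra become highly connected in any fixed cohomological degree as $n$ grows. With that caveat the proof is entirely a matter of bookkeeping: no new argument beyond the preceding two results is required.
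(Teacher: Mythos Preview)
Your proposal is correct and matches the paper's own approach exactly: the corollary carries a \qed\ and is meant to follow immediately from Lemma~\ref{lem_contdual} and the unlabelled lemma preceding it, applied with $A^n = H^*(Y_n)$. Your observation that the standing hypothesis $\lim_n A^n = 0$ of the ambient subsection is not forced by the corollary's stated hypotheses is accurate and worth flagging; the paper silently relies on it (it is verified for the Tate tower in Lemma~\ref{lem_otherlimitsvanish}), and your handling of this point is appropriate.
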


\section{The algebraic Singer constructions} \label{sec_algsinger}
Classically, the algebraic Singer construction is an endofunctor on
the category of modules over the Steenrod algebra.  In \textsection
\ref{subsec_cohomalgsinger} we recall its definition, and a key property
proved by Adams, Gunawardena and Miller.   We then dualize the construction
in \textsection \ref{subsec_homalgsinger}.

Later, we will see how the algebraic Singer construction arises in its
cohomological (resp.~homological) form as the continuous cohomology
(resp.~continuous homology) of a certain tower of truncated Tate
spectra.  This tower of spectra induces a natural filtration on
the Singer construction.   We introduce this filtration in purely
algebraic terms in the present section, and will show in \textsection
\ref{subsec_tatessforsinger} that the algebraic and topological
definitions agree.

\subsection{The cohomological Singer construction} \label{subsec_cohomalgsinger}

\begin{dfn} \label{dfn_rplus}
Let $M$ be an $\A$-module.  The \emph{Singer construction}
$R_+(M)$ on $M$ is a graded $\A$-module given additively by the
formulas
$$
  \Sigma^{-1} R_+(M) = P(x,x^{-1}) \otimes M
$$
for $p=2$, and
$$
  \Sigma^{-1} R_+(M) = E(x) \otimes P(y, y^{-1}) \otimes M
$$
for $p$ odd.  Here $\deg(x)=1$, $\deg(y)=2$, and $\Sigma^{-1}$ denotes
desuspension by one degree.
The action of the Steenrod algebra is given, for $r \in \Z$ and
$a \in M$, by the formula
\begin{equation} \label{equ_singerops}
\Sq^s(x^r \otimes a) = \sum_j
	\binom{r-j}{s-2j} \, x^{r+s-j} \otimes \Sq^j(a)
\end{equation}
for $p=2$, and the formulas
\begin{align*} \label{equ_oddsingeroperations}
\SP^s(y^r \otimes a)
	&= \sum_j \binom{r-(p-1)j}{s-pj}
	\, y^{r+(p-1)(s-j)} \otimes \SP^j(a) \\
    &\qquad + \sum_j \binom{r-(p-1)j-1}{s-pj-1}
	\, x y^{r+(p-1)(s-j)-1} \otimes \beta\SP^j(a) \\
\SP^s(x y^{r-1} \otimes a) &=
	\sum_j \binom{r-(p-1)j-1}{s-pj}
	\, x y^{r+(p-1)(s-j)-1} \otimes \SP^j(a)
\end{align*}
and
\begin{align*}
\beta(y^r \otimes a) &= 0 \\
\beta(x y^{r-1} \otimes a) &= y^r \otimes a
\end{align*}
for $p$ odd.
\end{dfn}

This is the form of the Singer construction that is related to the
cyclic group $C_p$.  The cohomology of the classifying space of
this group is $H^*(BC_p) \cong E(x) \otimes P(y)$ for $p$ odd, with
$\deg(x)=1$, $\deg(y)=2$ and $\beta(x) = y$, as above.  The natural
$\A$-module structure on $H^*(BC_p)$ extends to the localization
$H^*(BC_p)[y^{-1}] = E(x) \otimes P(y, y^{-1})$, and letting $M = \F_p$
we get that $\Sigma^{-1} R_+(\F_p)$ is isomorphic to $H^*(BC_p)[y^{-1}]$
as an $\A$-module.  The case $p=2$ is similar.

When $p$ odd there is a second form of the Singer construction, related
to the symmetric group $\Sigma_p$.  Following \cite{LS1982}*{p.~272}
we identify $H^*(B\Sigma_p)$ with the subalgebra $E(u) \otimes P(v)$
of $H^*(BC_p)$ generated by $u = -x y^{p-2}$ and $v = - y^{p-1}$, with
$\deg(u) = 2p-3$ and $\deg(v) = 2p-2$.  The smaller form of the Singer
construction then corresponds to the direct summand $E(u) \otimes P(v,
v^{-1}) \otimes M$ of index~$(p-1)$ in $E(x) \otimes P(y, y^{-1}) \otimes
M$.  Explicit formulas for action of the Steenrod operations on the
smaller form of the Singer construction are given in \cite{S1981}*{(3.2)},
\cite{LS1982}*{\textsection 2} and \cite{BMMS}*{p.~47}.

In our work, we are only concerned with the version of the Singer
construction related to the group $C_p$.  The exact form of the formulas
in Definition~\ref{dfn_rplus} is justified by Theorem~\ref{thm_TopSinger}
below.

\subsubsection{The cohomological $\epsilon$-map} \label{subsec_algepsilon}
An important property of $R_+(M)$ is that there exists a natural
homomorphism $\epsilon \: R_+(M) \to M$ of $\A$-modules.  In Singer's
original definition for $p=2$, the map is given by the formula
\begin{equation} \label{equ_epsilon_even_formula}
\epsilon(\Sigma x^{r-1} \otimes a) =
\begin{cases}
\Sq^r(a) & \text{for $r\ge0$,} \\
0 & \text{for $r<0$.}
\end{cases}
\end{equation}
For $p$ odd, the $\A$-submodule spanned by elements of the form
$\Sigma y^{(p-1)r} \otimes a$ or
$\Sigma x y^{(p-1)r-1} \otimes a$ is a direct summand in $R_+(M)$.
The homomorphism $\epsilon$ is given by first projecting onto this direct
summand and then composing with the map
\begin{equation} \label{equ_epsilon_odd_formula}
\begin{split}
\Sigma y^{(p-1)r} \otimes a &\mapsto -(-1)^r \beta \SP^r(a) \\
\Sigma x y^{(p-1)r-1} \otimes a &\mapsto (-1)^r \SP^r(a)
\end{split}
\end{equation}
for $r\ge0$, still mapping to $0$ for $r<0$.  See \cite{BMMS}*{p.~50}.
It is clear that $\epsilon$ is surjective.

We recall the key property of $\epsilon$.  Adams, Gunawardena and Miller
\cite{AGM1985} make the following definition.

\begin{dfn}
An $\A$-module homomorphism $L \to M$ is a \emph{$\Tor$-equivalence}
if the induced map
\begin{equation}
	\Tor^{\A}_{*,*}(\F_p, L) \to \Tor^{\A}_{*,*}(\F_p, M)
\end{equation}
is an isomorphism.
\end{dfn}

The relevance of this condition is:

\begin{prop}[\cite{AGM1985}*{1.2}] \label{prop_extiso}
If $L \to M$ is a $\Tor$-equivalence, then for any $\A$-module~$N$
that is bounded below and of finite type the induced map
\begin{equation}
    \Ext_\A^{*,*}(M, N)\to \Ext_{\A}^{*,*}(L, N)
\end{equation}
is an isomorphism.
\end{prop}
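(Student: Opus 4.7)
The plan is to dualize $\Ext$ into $\Tor$ using the bounded-below finite-type hypothesis on $N$, and then deduce the resulting $\Tor$-statement from the hypothesis by a filtered colimit argument over a filtration of $N^\vee$.

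Since $N$ is bounded below and of finite type, the canonical map $N \to (N^\vee)^\vee$ into the double $\F_p$-linear dual is an isomorphism of $\A$-modules, where $N^\vee = \Hom(N, \F_p)$ is a right $\A$-module, bounded above and of finite type.  The tensor-hom adjunction over $\A$ combined with this identification gives a natural isomorphism
\[
\Hom_\A(-, N) \cong \Hom(N^\vee \otimes_\A -, \F_p) \,.
\]
Since $\Hom(-, \F_p)$ is exact on graded $\F_p$-vector spaces, passing to derived functors yields
\[
\Ext^{s,*}_\A(-, N) \cong \Hom(\Tor^\A_{s,*}(N^\vee, -), \F_p)
\]
as bigraded groups (with a suitable sign convention on internal degree).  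It therefore suffices to prove that $\Tor^\A_*(V, L) \to \Tor^\A_*(V, M)$ is an isomorphism for every right $\A$-module $V$ that is bounded above and of finite type.

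To verify this, filter $V$ by the right $\A$-submodules $V^{\ge n} = \bigoplus_{k\ge n} V_k$; these are $\A$-stable because the right $\A$-action raises internal degree.  Since $V$ is bounded above, each $V^{\ge n}$ is finite-dimensional, and $V = \colim_{n\to-\infty} V^{\ge n}$ as a filtered colimit of right $\A$-modules.  Each subquotient $V^{\ge n}/V^{\ge n+1} \cong V_n$ inherits the trivial $\A$-action, since positive-degree Steenrod operations applied to a class in $V_n$ land in $V^{\ge n+1}$ and are killed in the quotient.  Hence $V^{\ge n}/V^{\ge n+1}$ is a direct sum of copies of $\Sigma^n \F_p$.

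Starting from the top piece $V^{\ge u} = V_u$, where $u$ is the upper bound of $V$, the isomorphism of $\Tor$-groups reduces via additivity and a degree shift to the hypothesis $\Tor^\A_*(\F_p, L) \cong \Tor^\A_*(\F_p, M)$.  The inductive step, using the short exact sequence $0 \to V^{\ge n+1} \to V^{\ge n} \to V_n \to 0$ together with the long exact sequence of $\Tor$ and the 5-lemma, propagates the isomorphism down through finitely many extensions to each $V^{\ge n}$.  Since $\Tor$ commutes with filtered colimits in the first variable, passing to the colimit gives $\Tor^\A_*(V, L) \cong \Tor^\A_*(V, M)$, and dualizing back as in the first step produces the desired $\Ext$-isomorphism.

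The main obstacle is the dualization in the first step: one must verify carefully that the tensor-hom adjunction and the double-duality $N \cong (N^\vee)^\vee$ interact correctly with the internal gradings on $\Ext$ and $\Tor$, and that the finite-type hypothesis on $N$ legitimately permits interchanging derived functors with $\F_p$-linear duality.  Once this is in place, the filtration and 5-lemma bookkeeping is routine.
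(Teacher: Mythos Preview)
The paper does not supply a proof of this proposition; it is simply quoted from \cite{AGM1985}*{1.2}, so there is no in-paper argument to compare against.  Your proposal is correct and is essentially the standard proof: dualize $\Ext_\A(-,N)$ to $\Hom(\Tor^\A(N^\vee,-),\F_p)$ using the bounded-below finite-type hypothesis on $N$, then reduce the resulting $\Tor$-statement for an arbitrary bounded-above finite-type right $\A$-module $V$ to the case $V=\F_p$ via the exhaustive filtration $V^{\ge n}$ with trivial-module subquotients, the long exact sequence, the 5-lemma, and the fact that $\Tor$ commutes with filtered colimits.  Each step is sound; the caveat you flag about the tensor--hom adjunction and double duality in the graded setting is a routine verification, not a genuine obstruction.
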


Here is their key result, proved in \cite{AGM1985}*{1.3}.

\begin{thm}[Gunawardena, Miller] \label{thm_extiso}
The Singer homomorphism $\epsilon \: R_+(M) \to M$ is a $\Tor$-equivalence.
\end{thm}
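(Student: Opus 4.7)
My plan is a dévissage reducing the theorem to the single case $M = \F_p$, where it becomes the celebrated theorem of Lin (for $p = 2$) and Gunawardena (for odd $p$) used in the proof of the Segal conjecture for $C_p$. The key preliminary observation is that Definition~\ref{dfn_rplus} exhibits $R_+(M)$ additively as a degree shift of $\Delta \otimes_{\F_p} M$, where $\Delta$ is the fixed graded $\F_p$-vector space $P(x, x^{-1})$ for $p = 2$ and $E(x) \otimes P(y, y^{-1})$ for $p$ odd. Since $\Delta$ is $\F_p$-flat and the formulas for the $\A$-action on $R_+(M)$ are $\F_p$-linear in $a \in M$, the functor $R_+$ is exact on $\A$-modules and commutes with arbitrary direct sums and filtered colimits; moreover $\epsilon$ is a natural transformation of such functors.

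\textbf{Dévissage.} Let $\mathcal{C}$ denote the class of $\A$-modules $M$ for which $\epsilon_M$ is a $\Tor$-equivalence. Given a short exact sequence $0 \to M' \to M \to M'' \to 0$, the exactness of $R_+$ yields $0 \to R_+(M') \to R_+(M) \to R_+(M'') \to 0$, and naturality of $\epsilon$ produces a map between the associated long exact sequences in $\Tor_\A^{*,*}(\F_p, -)$. The five lemma shows that $\mathcal{C}$ is closed under extensions, and since both $R_+$ and $\Tor_\A^{*,*}(\F_p, -)$ commute with filtered colimits and direct sums, $\mathcal{C}$ is closed under these as well. Now any $\A$-module is the filtered colimit of its finitely generated sub-$\A$-modules, each of which is bounded below and of finite type over $\F_p$. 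Such a submodule admits a (possibly transfinite) filtration whose successive quotients are suspensions of $\F_p$: iteratively split off a lowest-degree nonzero class via a map $\Sigma^n \F_p \hookrightarrow M$, which is automatically $\A$-linear because Steenrod operations raise degree. Hence $\mathcal{C}$ contains every $\A$-module provided it contains $\F_p$, using also that $R_+$, and therefore $\epsilon$, commutes with suspension.

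\textbf{The case $M = \F_p$ and the main obstacle.} For $M = \F_p$ the module $R_+(\F_p)$ is a shift of the Tate cohomology $\tH^{-*}(C_p; \F_p)$, namely the localization of $H^*(BC_p; \F_p)$ obtained by inverting the degree-$2$ polynomial generator; the map $\epsilon$ is precisely the map studied by Lin and Gunawardena in their proofs of the Segal conjecture for $C_p$. The assertion that $\epsilon$ is a $\Tor$-equivalence in this case is the algebraic heart of those proofs, established by explicit calculation via the $\Lambda$-algebra or by a minimal resolution of $\F_p$ over $\A$, as documented in \cite{LDMA1980} and the references compiled in \cite{AGM1985}. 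This $M = \F_p$ case is the main obstacle: the dévissage of the previous paragraph is formal once it is in hand, but the underlying computation genuinely requires work, and the plan is to import it as classical input rather than to redo it here.
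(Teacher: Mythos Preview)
The paper itself gives no proof here; it records the statement and cites \cite{AGM1985}*{1.3}. Your d\'evissage---reduce to $M=\F_p$ and invoke Lin/Gunawardena---is the right instinct, but the reduction as written has a genuine gap.

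The specific error is the claim that including a lowest-degree nonzero class gives an $\A$-linear map $\Sigma^n\F_p \hookrightarrow M$ ``because Steenrod operations raise degree.'' It does not: for $M=\A$ and the class $1\in\A_0$, the proposed map $\F_p\to\A$ would have to carry $\Sq^1\cdot 1=0$ (in $\F_p$) to $\Sq^1\cdot 1=\Sq^1\neq 0$ (in $\A$). What \emph{is} $\A$-linear, for exactly the reason you give, is the projection $M\twoheadrightarrow\Sigma^n\F_p$ onto a one-dimensional quotient of the bottom degree. But switching to surjections does not rescue the argument. For an increasing filtration of your type even to begin, $M$ must contain a nonzero element annihilated by every positive-degree Steenrod operation, and $\A$ contains none: for instance, if $0\neq a\in\A_n$ and $2^k>n$ then $\Sq^{2^k}a\neq 0$ (at $p=2$; similarly with $\SP^{p^k}$ for $p$ odd). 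So the finitely generated $\A$-module $\A$ already lies outside the reach of your filtration. The alternative, a decreasing filtration by the submodules $M^{\geq k}$ of elements in degree $\geq k$, fails for a different reason: passing to the limit would need $\Tor^{\A}_{s,t}(\F_p, R_+(M^{\geq k}))$ to vanish for fixed $(s,t)$ once $k$ is large, but $R_+$ destroys connectivity---the class $\Sigma x^r\otimes a$ has total degree $1+r+\deg(a)$, which can be made small by taking $r\ll 0$ regardless of how large $\deg(a)$ is---so $R_+(M^{\geq k})$ is unbounded below and the standard connectivity estimate is unavailable. In short, the closure properties you list do not by themselves propagate the case $M=\F_p$ to all $\A$-modules; the proof in \cite{AGM1985} brings in additional Hopf-algebraic structure of the Singer functor to make the reduction go through.
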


We will later encounter instances of $\A$-module homomorphisms $R_+(M)
\to M$ induced by maps of spectra.  It is often possible to determine
those homomorphisms by the following corollary.

\begin{cor} \label{cor_uniquehom}
Let $M$, $N$ be $\A$-modules such that $N$ is bounded below and of
finite type.  Then
$$
\epsilon^* \: \Hom_{\A}(M, N) \to \Hom_{\A}(R_+(M), N)
$$
is an isomorphism, so any $\A$-linear homomorphism $f \: R_+(M) \to N$
factors as $g \circ \epsilon$ for a unique $\A$-linear homomorphism $g
\:M \to N$:
$$
\xymatrix{
R_+(M) \ar[rr]^-\epsilon \ar[dr]_f && M \ar[dl]^g \\
& N
}
$$
\end{cor}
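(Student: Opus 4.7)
The plan is to deduce this directly from Theorem~\ref{thm_extiso} together with Proposition~\ref{prop_extiso}, by specializing to the bidegree $(s,t) = (0,*)$ component of Ext.

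First, I would observe that for any $\A$-modules $L$ and $N$ we have a natural identification
\[
\Ext_\A^{0,*}(L, N) \cong \Hom_\A(L, N)
\]
(taking a projective resolution $P_\bullet \to L$, the complex $\Hom_\A(P_\bullet, N)$ has $H^0 = \Hom_\A(L,N)$, and this is natural in $L$). Applying Proposition~\ref{prop_extiso} to the $\A$-module map $\epsilon \: R_+(M) \to M$, which is a $\Tor$-equivalence by Theorem~\ref{thm_extiso}, and to the $\A$-module $N$ (which is bounded below and of finite type by hypothesis), we obtain an isomorphism
\[
\epsilon^* \: \Ext_\A^{*,*}(M, N) \overset{\cong}{\longrightarrow} \Ext_\A^{*,*}(R_+(M), N)
\]
of bigraded $\F_p$-vector spaces. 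Restricting to the $s=0$ row gives the asserted isomorphism $\Hom_\A(M, N) \cong \Hom_\A(R_+(M), N)$, where the map sends $g$ to $g \circ \epsilon$.

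For the factorization statement, given any $\A$-linear homomorphism $f \: R_+(M) \to N$, surjectivity of $\epsilon^*$ produces an $\A$-linear $g \: M \to N$ with $g \circ \epsilon = f$, and injectivity of $\epsilon^*$ guarantees that $g$ is unique with this property. This yields the commutative triangle in the statement.

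No step here presents a genuine obstacle, since all the real work is packaged into Theorem~\ref{thm_extiso} and Proposition~\ref{prop_extiso}; the only minor point to be careful about is that $R_+(M)$ need not itself be bounded below, but Proposition~\ref{prop_extiso} only imposes the bounded below and finite type hypothesis on the second argument $N$, which is exactly what we have assumed.
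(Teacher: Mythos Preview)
Your proof is correct and takes essentially the same approach as the paper, which simply cites Theorem~\ref{thm_extiso} and Proposition~\ref{prop_extiso}; you have merely spelled out the identification $\Ext_\A^{0,*} \cong \Hom_\A$ and the unpacking of the isomorphism into existence and uniqueness of the factorization.
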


\begin{proof}
This is clear from Theorem~\ref{thm_extiso} and
Proposition~\ref{prop_extiso}.
\end{proof}

\begin{remark}
A special case of this occurs when $M = N$ is a cyclic $\A$-module.
Then$$
\F_p \cong \Hom_{\A}(M, M) \cong \Hom_{\A}(R_+(M), M) \,,
$$
so any $\A$-linear homomorphism $R_+(M) \to M$ is equal to a scalar
multiple of $\epsilon$.
\end{remark}

\subsection{The homological Singer construction} \label{subsec_homalgsinger}
Before we define the homological version of the Singer construction on
an $\A_*$-comodule $M_*$, we need to discuss a natural filtration on
the cohomological Singer construction.  For a bounded below
$\A$-module $M$ of finite type over $\F_p$, let
$$
F^nR_+(M) = \F_2\{ \Sigma x^r \otimes a \mid
	r \in \Z, \ \deg(a) = q, \ 1+r-q \ge n\}
$$
for $p=2$, and
$$
F^nR_+(M) = \F_p\{\Sigma x^i y^r \otimes a \mid
	i \in \{0,1\}, \ r \in \Z, \ \deg(a) = q, \ 1+i+2r-(p-1)q \ge n\}
$$
for $p$ odd.  In each case $a$ runs through an $\F_p$-basis for $M$.
Then
\begin{equation} \label{equ_rplusfiltration}
   \cdots \subset F^nR_+(M)\subset F^{n-1}R_+(M)\subset \cdots \subset R_+(M)
\end{equation}
is an exhaustive filtration of $R_+(M)$, which is clearly Hausdorff.
Because $M$ is bounded below and of finite type, each $F^nR_+(M)$ is
bounded below and of finite type, so $\Rlim_n F^nR_+(M)$ is trivial.
Hence the filtration is complete.

For reasons made clear in Corollary~\ref{cor_comparefiltrations}, we will
refer to this filtration as the \emph{Tate filtration}.  When $M$ is the
cohomology of a bounded below spectrum of finite type over~$\F_p$, we will
see how \eqref{equ_rplusfiltration} is induced from topology.  In this
case, it will be immediate that the filtration is one of $\A$-modules.
For a general $\A$-module $M$, this can be checked directly using the
explicit formulas in Definition~\ref{dfn_rplus}.

We are now in the situation discussed in the previous section, with
$A^n = F^nR_+(M)$ and $A^{-\infty} = R_+(M)$.  Letting $F^nR_+(M)_* =
\Hom(F^nR_+(M), \F_p) = A_n$ we get an inverse system
\begin{equation} \label{equ_singhomfilt}
  \cdots \to F^{n-1}R_+(M)_* \to F^nR_+(M)_* \to \cdots
\end{equation}
as in~\eqref{equ_homtower}, dual to the direct system
\eqref{equ_rplusfiltration}.  We are interested in the inverse limit
$A_{-\infty} = \lim_n A_n$, with the linear topology given by this
tower of surjections.  Recall Definition~\ref{dfn_completecomodule}
of a complete $\A_*$-comodule.

\begin{dfn} \label{dfn_homologysinger}
Let $M_*$ be a bounded below $\A_*$-comodule of finite type.
Its dual $M = \Hom(M_*, \F_p)$ is a bounded below
$\A$-module of finite type, and $M_* \cong \Hom(M, \F_p)$.  We define
the \emph{homological Singer construction} on $M_*$ to be the complete
$\A_*$-comodule given by
$$
R_+(M_*) = \Hom(R_+(M), \F_p) \,.
$$
It is isomorphic to the inverse limit $\lim_n F^nR_+(M)_*$.
\end{dfn}

A more explicit description can be given.  For $p=2$ the $\F_2$-linear
dual of
\[
\tH_{-*}(C_2; \F_2) \cong \Sigma H^*(C_2; \F_2)[x^{-1}] = \Sigma P(x, x^{-1})
\]
is isomorphic to the ring of Laurent polynomials $\tH^{-*}(C_2; \F_2)
= P(u, u^{-1})$, where $\deg(u)=-1$ and $u^{-r}$ is dual to $\Sigma
x^{r-1}$.  For $p$ odd, the $\F_p$-linear dual of
\[
\tH_{-*}(C_p; \F_p) \cong \Sigma H^*(C_p; \F_p)[y^{-1}]
	= \Sigma E(x) \otimes P(y, y^{-1})
\]
is isomorphic to $\tH^{-*}(C_p; \F_p) = E(u) \otimes P(t, t^{-1})$,
where $\deg(u)=-1$, $\deg(t)=-2$ and $u^{1-i} t^{-r}$ is dual to $\Sigma x^i
y^{r-1}$.  These notations are compatible with those from \cite{BM1994}.
We get the following identifications:
\[
F^nR_+(M)_* \cong \F_2\{ u^r \otimes \alpha \mid
	r \in \Z, \ \deg(\alpha) = q, \ r+q \le -n\}
\]
for $p=2$, and
$$
F^nR_+(M)_* \cong \F_p\{ u^i t^r\otimes \alpha \mid
	i \in \{0,1\}, \ r \in \Z, \ \deg(\alpha) = q, \ i+2r+(p-1)q \le -n\}
$$
for $p$ odd.
In each case $\alpha$ ranges over an $\F_p$-basis for $M_*$.
The maps of~\eqref{equ_singhomfilt} are given by the obvious projections.
Thus, $R_+(M_*)$ is isomorphic to the graded vector space of formal series
\[
\sum_{r=-\infty}^\infty u^r \otimes \alpha_r
\]
for $p=2$, and
\[
\sum_{r=-\infty}^\infty t^r \otimes \alpha_{0,r}
+ \sum_{r=-\infty}^\infty u t^r \otimes \alpha_{1,r}
\]
for $p$ odd.  In each of these sums $r$ is bounded below, but not above,
since $M_*$ is bounded below.

Using the linear topology on $R_+(M_*)$ given by the kernel filtration
coming from~\eqref{equ_singhomfilt}, we may reformulate this as follows:
Let
$$
\Lambda = \tH^{-*}(C_p; \F_p) = \begin{cases}
P(u, u^{-1}) & \text{for $p=2$,} \\
E(u) \otimes P(t,t^{-1}) & \text{for $p$ odd.}
\end{cases}
$$
Consider $\Lambda\otimes M_*\subset R_+(M_*)$.  For every $n$ the
composition $\Lambda\otimes M_* \subset R_+(M_*) \to F^nR_+(M)_*$ is
surjective, so the completed tensor product $\Lambda\ctensor M_*$ (for
the linear topology on $\Lambda$ derived from the grading) is canonically
isomorphic to $R_+(M_*)$.

\subsubsection{The homological $\epsilon_*$-map}
Let
$$
\epsilon_* \: M_* \to R_+(M_*)
$$
be the dual of $\epsilon\: R_+(M)\to M$.  Then $\epsilon_*$ is a
continuous homomorphism of complete $\A_*$-comodules.  Continuity is
trivially satisfied since the source of $\epsilon_*$ has the discrete
topology.

Dualizing~\eqref{equ_epsilon_even_formula} and
\eqref{equ_epsilon_odd_formula}, we see that $\epsilon_*$ is given by
the formulas
\begin{equation}
	\label{equ_dualSingerEvaluation_even}
\epsilon_*(\alpha) = \sum_{r=0}^\infty u^{-r} \otimes \Sq_*^r(\alpha)
\end{equation}
for $p=2$, and
\begin{equation}
	\label{equ_dualSingerEvaluation_odd}
\epsilon_*(\alpha) = \sum_{r=0}^\infty
	(-1)^r t^{-(p-1)r} \otimes \SP^r_*(\alpha)
- \sum_{r=0}^\infty
	(-1)^r u t^{-(p-1)r-1} \otimes (\beta\SP^r)_*(\alpha)
\end{equation}
for $p$ odd.
This expression may be compared with \cite{AGM1985}*{(3.6)}.
It is clear that $\epsilon_*$ is injective.

\begin{lemma} \label{lem_homiso}
Let $M$ and $N$ be bounded below $\A$-modules of finite type, and let
$M_*$ and $N_*$ be the dual $\A_*$-comodules.  Then
$$
\epsilon_*\: \Hom_{\A_*}(N_*, M_*) \to \Hom^c_{\A_*}(N_*, R_+(M_*))
$$
is an isomorphism, so any continuous $\A_*$-comodule homomorphism
$f_* \: N_* \to R_+(M_*)$ factors as $f_* = \epsilon_* \circ g_*$
for a unique $\A_*$-comodule homomorphism $g_* \: N_* \to M_*$.
\end{lemma}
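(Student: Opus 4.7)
The plan is to reduce the lemma to Corollary \ref{cor_uniquehom} by $\F_p$-linear duality, using that $R_+(M_*)$ was defined in Definition \ref{dfn_homologysinger} as $\Hom(R_+(M), \F_p)$, equipped with the linear topology dual to the Tate filtration \eqref{equ_rplusfiltration} of $R_+(M)$. The point is that, even though $R_+(M)$ need not be of finite type, its filtration by bounded below finite type $\A$-submodules $F^nR_+(M)$ makes strict duality available stage by stage.

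Concretely, I would establish two identifications. First, by ordinary $\F_p$-linear duality between bounded below graded vector spaces of finite type, combined with the standard correspondence between bounded below finite type $\A$-module structures and $\A_*$-comodule structures on the dual, one has
\[
\Hom_{\A_*}(N_*, M_*) \cong \Hom_{\A}(M, N) \,.
\]
Second, using that $R_+(M_*) \cong \lim_n F^nR_+(M)_*$ as a complete Hausdorff topological $\F_p$-vector space, and that each quotient $F^nR_+(M)_*$ is an ordinary $\A_*$-comodule dual to the bounded below finite type $\A$-module $F^nR_+(M)$, I would write
\[
\Hom^c_{\A_*}(N_*, R_+(M_*))
\cong \lim_n \Hom_{\A_*}(N_*, F^nR_+(M)_*)
\cong \lim_n \Hom_{\A}(F^nR_+(M), N)
\cong \Hom_{\A}(R_+(M), N) \,.
\]
Here the first isomorphism uses that $N_*$ is discrete, so that a continuous $\A_*$-comodule homomorphism into the completed limit target is precisely a compatible family of $\A_*$-comodule homomorphisms into the discrete quotients; the middle isomorphism is finite type duality applied stage by stage; and the last uses $\colim_n F^nR_+(M) = R_+(M)$.

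Under these identifications, postcomposition by $\epsilon_*$ transposes to precomposition by $\epsilon$, because $\epsilon_*$ was \emph{defined} as the $\F_p$-linear dual of $\epsilon$. Therefore the map $\epsilon_*$ of the lemma corresponds to $\epsilon^* \: \Hom_{\A}(M, N) \to \Hom_{\A}(R_+(M), N)$, which is an isomorphism by Corollary \ref{cor_uniquehom}. The main bookkeeping obstacle is the second identification: one must check that the dualization of the $\A$-submodule filtration \eqref{equ_rplusfiltration} yields a filtration by $\A_*$-comodule quotients of $R_+(M_*)$ whose inverse limit coaction agrees with the complete $\A_*$-coaction of \textsection \ref{subsec_limitsofcomodules}, so that an inverse limit of ordinary $\A_*$-comodule homomorphisms really is a continuous homomorphism of complete $\A_*$-comodules. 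Granted that—which is essentially the content of the preceding section—both isomorphisms follow from standard finite type duality, and the lemma falls out of Corollary \ref{cor_uniquehom}.
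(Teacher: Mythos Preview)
Your proposal is correct and follows the same overall strategy as the paper: reduce to Corollary~\ref{cor_uniquehom} by identifying $\Hom^c_{\A_*}(N_*, R_+(M_*))$ with $\Hom_{\A}(R_+(M), N)$ via duality, and likewise $\Hom_{\A_*}(N_*, M_*)$ with $\Hom_{\A}(M, N)$. The tactical difference lies in how the key identification is established. You pass through the Tate filtration, writing $\Hom^c_{\A_*}(N_*, R_+(M_*)) \cong \lim_n \Hom_{\A_*}(N_*, F^nR_+(M)_*)$ and applying finite type duality stage by stage before taking the limit. The paper instead argues directly: it dualizes the commutative square expressing $\A$-linearity of a map $f \: R_+(M) \to N$ to obtain the square expressing continuous $\A_*$-colinearity of $f_* \: N_* \to R_+(M_*)$, and then invokes Lemma~\ref{lem_contdual} (the continuous dual recovers the original) to see that this correspondence is a bijection. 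The paper's route sidesteps the bookkeeping obstacle you flag, since it never unpacks the filtration; your route makes the role of the filtration explicit but relies on the compatibility of the quotient coactions with the complete coaction, which---as you correctly note---is precisely what \S\ref{subsec_limitsofcomodules} provides.
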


\begin{proof}
Notice that $\Hom_{\A_*}(N_*, M_*) = \Hom^c_{\A_*}(N_*, M_*)$ and $\A_*
\otimes N_* = \A_* \ctensor N_*$, since $M_*$ and $N_*$ are discrete.
Applying $\Hom(-, \F_p)$ to a commutative square
\[
\xymatrix{
\A \otimes R_+(M) \ar[r]^-\lambda \ar[d]_{1 \otimes f} & R_+(M) \ar[d]^f \\
\A \otimes N \ar[r]^-\lambda & N
}
\]
we get a commutative square
\[
\xymatrix{
\A_* \ctensor R_+(M_*) & R_+(M_*) \ar[l]_-\nu \\
\A_* \otimes N_* \ar[u]^{1\ctensor f_*} & N_* \ar[l]_-\nu \ar[u]_{f_*}
}
\]
of continuous homomorphisms, where $R_+(M_*)$ and $\A_* \ctensor
R_+(M_*)$ have the limit topologies, while $N_*$ and $\A_* \otimes N_*$
are discrete.  Applying $\Hom^c(-, \F_p)$ to the latter square we recover
the first, by Lemma~\ref{lem_contdual}.  Hence the right hand vertical
map in the commutative square
\[
\xymatrix{
\Hom_{\A}(M, N) \ar[r]^-{\epsilon^*}_-\cong \ar[d]_\cong &
\Hom_{\A}(R_+(M), N) \ar[d]^\cong \\
\Hom_{\A_*}(N_*, M_*) \ar[r]^-{\epsilon_*}  &
\Hom_{\A_*}^c(N_*, R_+(M_*))
}
\]
is an isomorphism.  It is easy to see that the left hand vertical map
is an isomorphism, and the upper horizontal map is an isomorphism by
Corollary~\ref{cor_uniquehom}.
\end{proof}

\subsubsection{Various remarks on the homological Singer construction}
The following remarks are not necessary for our immediate applications,
but we include them to shed some light on the coaction $\nu \: R_+(M_*)
\to \A_* \ctensor R_+(M_*)$ and the dual Singer map $\epsilon_* \: M_*
\to R_+(M_*)$, and their relations to the completions introduced so far.

Dualizing~\eqref{equ_singerops}, we get that the dual Steenrod operations
on classes $u^r \otimes \alpha$ in $\Lambda \otimes M_* \subset R_+(M_*)$
are given by
\begin{equation} \label{equ_dualSinger}
\Sq_*^s(u^r \otimes \alpha) = \sum_j \binom{-r-s-1}{s-2j}
	u^{r+s-j} \otimes \Sq_*^j(\alpha)
\end{equation}
for $p=2$, and similarly for $p$ odd.
This sum is finite, since $M_*$ is assumed to be bounded below, so we
have the following commutative diagram:
\begin{equation} \label{diag_subtensor}
\xymatrix{
R_+(M_*) \ar[r]^-\nu & \A_* \ctensor R_+(M_*) \\
\Lambda \otimes M_* \ar@{ >->}[u] \ar[r] &
\A_* \ctensor (\Lambda \otimes M_*) \ar@{ >->}[u]
}
\end{equation}
Two remarks are in order.  First, $\Lambda \otimes M_*$ is not complete
with respect to the subspace topology from $R_+(M_*)$.  Hence $\Lambda
\otimes M_*$ is not a complete $\A_*$-comodule in the sense explained
above.  Second, there are elements $u^r \otimes \alpha$ in $\Lambda
\otimes M_*$ with the property that $\Sq^s_*(u^r \otimes \alpha)$ is
nonzero for infinitely many $s$, and similarly for $p$ odd.  For example,
$\Sq^s_*(u^{-1} \otimes \alpha)$ contains the term
\[
\binom{-s}{s} u^{s-1} \otimes \alpha
= \binom{2s-1}{s} u^{s-1} \otimes \alpha
\]
for $j=0$, according to~\eqref{equ_dualSinger}.  This equals $u^{s-1}
\otimes \alpha$ whenever $s = 2^e$ is a power of~$2$, so $\nu(u^{-1}
\otimes \alpha)$ is an infinite sum.  Hence $\Lambda \otimes M_*$ is
not an algebraic $\A_*$-comodule, either.

We will now identify the image of the homological version of the Singer
map
$$
  \epsilon_* \: M_* \to R_+(M_*)
$$
with the maximal algebraic $\A_*$-comodule contained in $R_+(M_*)$.

\begin{dfn}
Given a complete $\A_*$-comodule $N_*$, let $N_*^{\alg} \subseteq N_*$
be given by the pullback
\[
\xymatrix{
N_*^{\alg} \ar[rr]^-{\nu|N_*^{\alg}} \ar@{ >->}[d]
	&& \A_* \otimes N_* \ar@{ >->}[d] \\
N_* \ar[rr]^-{\nu} && \A_* \ctensor N_*
}
\]
in graded $\F_p$-vector spaces.
In other words, $N_*^{\alg}$ consists of the $\alpha \in N_*$ whose
coaction $\nu(\alpha) = \sum_I \Sq^I_* \otimes \alpha_I$ (in the notation
for $p=2$) is a finite sum, rather than a formal infinite sum.  Here $I$
runs over the admissible sequences, so that $\{\Sq^I_*\}_I$ is a basis
for $\A_*$, and $\alpha_I = \Sq^I_*(\alpha)$.
\end{dfn}

\begin{lemma}
The restricted coaction map $\nu|N_*^{\alg}$ factors (uniquely) through
the inclusion $\A_* \otimes N_*^{\alg} \subseteq \A_* \otimes N_*$,
hence defines a map
\[
\nu^{\alg} \: N_*^{\alg} \to \A_* \otimes N_*^{\alg}
\]
that makes $N_*^{\alg}$ an $\A_*$-comodule in the algebraic sense.
\end{lemma}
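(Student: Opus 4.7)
The plan is to verify two things in turn: first, that the map $\nu^{\alg}$ is well-defined as an honest factorization; second, that the resulting structure satisfies the algebraic comodule axioms. Uniqueness of the factorization is essentially free, since $\A_*$ is a free graded $\F_p$-vector space and so tensor product with it preserves injections, giving $\A_*\otimes N_*^{\alg} \into \A_*\otimes N_*$; any factorization through this inclusion is automatically unique.

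The substantive content is existence. Given $\alpha \in N_*^{\alg}$, write $\nu(\alpha) = \sum_{I\in S} \Sq^I_* \otimes \alpha_I$ for some finite set $S$ of admissible sequences, using the definition of $N_*^{\alg}$. I must show that each $\alpha_I$ lies again in $N_*^{\alg}$. The tool will be the continuous coassociativity square~\eqref{equ_contcomod} for $\nu$, evaluated at $\alpha$. On one side we find
\[
(1 \ctensor \nu)\,\nu(\alpha) = \sum_{I\in S} \Sq^I_* \otimes \nu(\alpha_I),
\]
a finite outer sum whose inner terms $\nu(\alpha_I)$ \emph{a priori} live only in the completion $\A_* \ctensor N_*$. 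On the other side we find $(\psi\ctensor 1)\,\nu(\alpha) = \sum_{I\in S} \psi(\Sq^I_*)\otimes \alpha_I$, which sits wholly in the uncompleted $(\A_*\otimes\A_*)\otimes N_*$, because each $\psi(\Sq^I_*)$ is a finite sum of decomposable tensors in $\A_*\otimes\A_*$. Under the isomorphism $\A_*\ctensor (\A_*\ctensor N_*) \cong (\A_*\otimes\A_*)\ctensor N_*$, using the admissible basis $\{\Sq^J_*\}_J$ of $\A_*$ to match outer coefficients, each $\nu(\alpha_I)$ is forced to equal a manifestly finite sum of terms of the form $\Sq^K_* \otimes (\text{element of }N_*)$. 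Hence $\nu(\alpha_I) \in \A_*\otimes N_*$, and $\alpha_I \in N_*^{\alg}$.

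The main obstacle I anticipate is making the step ``match outer $\A_*$-coefficients'' rigorous inside the completed tensor product, since a naive comparison of coefficients is only legitimate when the outer factor is a direct sum rather than a product. The clean way to handle this is to project both sides of the coassociativity identity onto the uncompleted quotient $(\A_*\otimes\A_*)\otimes (N_*/U_\alpha)$ for each open subspace $U_\alpha$ in the defining neighborhood system for $N_*$, carry out the linear algebra there (where all tensor factors are discrete and each graded piece is a finite direct sum, since $\A_*$ is bounded below and of finite type), and then take the inverse limit over $\alpha$.

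Once $\nu^{\alg}$ is in hand, the algebraic counit and coassociativity axioms for $(N_*^{\alg}, \nu^{\alg})$ follow by restricting the corresponding continuous diagrams for $(N_*, \nu)$ along the inclusion $N_*^{\alg}\subseteq N_*$. Both diagrams already commute at the level of complete $\A_*$-comodules by hypothesis, and since both legs now factor through the uncompleted tensor products over $N_*^{\alg}$, the completed and uncompleted versions of these diagrams agree on the restricted domain, yielding the algebraic comodule identities on $N_*^{\alg}$.
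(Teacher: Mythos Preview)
Your argument is correct and rests on the same idea as the paper's: complete coassociativity forces $(1\otimes\nu)\circ(\nu|N_*^{\alg})$ to land in the uncompleted $\A_*\otimes\A_*\otimes N_*$, since it equals $(\psi\otimes 1)\circ(\nu|N_*^{\alg})$, and this in turn forces each $\nu(\alpha_I)$ back into $\A_*\otimes N_*$.

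The paper packages this more cleanly and avoids your ``matching outer coefficients'' concern altogether. Since tensoring with the flat module $\A_*$ preserves the pullback defining $N_*^{\alg}$, the square
\[
\xymatrix{
\A_* \otimes N_*^{\alg} \ar[rr]^-{1 \otimes \nu|N_*^{\alg}} \ar@{ >->}[d] &&
	\A_* \otimes \A_* \otimes N_* \ar@{ >->}[d] \\
\A_* \otimes N_* \ar[rr]^-{1 \otimes \nu} && \A_* \otimes (\A_* \ctensor N_*)
}
\]
is again a pullback. The coassociativity observation above says precisely that the map $\nu|N_*^{\alg}\:N_*^{\alg}\to\A_*\otimes N_*$ followed by $1\otimes\nu$ factors through the upper right corner, so the universal property of the pullback hands you the factorization through $\A_*\otimes N_*^{\alg}$ directly. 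No basis, no coefficient extraction, no passage to quotients $N_*/U_\alpha$ is needed. Your element-level argument is a valid unpacking of this, but the diagrammatic version is shorter and makes the role of the pullback definition of $N_*^{\alg}$ transparent.
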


\begin{proof}
The composite
\[
\xymatrix{
N_*^{\alg} \ar[r]^-{\nu|N_*^{\alg}} &
\A_* \otimes N_* \ar[r]^-{1 \otimes \nu} &
\A_* \otimes (\A_* \ctensor N_*)
}
\]
factors as
\[
\xymatrix{
N_*^{\alg} \ar[r]^-{\nu|N_*^{\alg}} &
\A_* \otimes N_* \ar[r]^-{\psi \otimes 1} &
\A_* \otimes \A_* \otimes N_*
\subseteq
\A_* \otimes (\A_* \ctensor N_*)
}
\]
by coassociativity~\eqref{equ_contcomod} of the complete coaction.  Hence
$\nu|N_*^{\alg}$ factors through the pullback $\A_* \otimes N_*^{\alg}$ in
\[
\xymatrix{
\A_* \otimes N_*^{\alg} \ar[rr]^-{1 \otimes \nu|N_*^{\alg}} \ar@{ >->}[d] &&
	\A_* \otimes \A_* \otimes N_* \ar@{ >->}[d] \\
\A_* \otimes N_* \ar[rr]^-{1 \otimes \nu} && \A_* \otimes (\A_* \ctensor N_*)
	\rlap{\,.}
}
\]
Algebraic counitality and coassociativity of the lifted map
$\nu^{\alg}$ follow from the corresponding complete properties of $\nu$. 
\end{proof}

The following identification stems from a conversation with
M.~B{\"o}kstedt.

\begin{prop}
The image of the injective homomorphism $\epsilon_* \: M_* \to R_+(M_*)$
equals the maximal algebraic sub $\A_*$-comodule $R_+(M_*)^{\alg} \subset
R_+(M_*)$.
\end{prop}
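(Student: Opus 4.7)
The plan is to verify the two inclusions separately. For $\im(\epsilon_*) \subseteq R_+(M_*)^{\alg}$, I will exploit that $\epsilon_*$ is a continuous $\A_*$-comodule homomorphism whose source $M_*$ is discrete and equipped with the ordinary algebraic coaction. For $\alpha \in M_*$, the coaction $\nu_{M_*}(\alpha) = \sum_I \Sq^I_* \otimes \Sq^I_*(\alpha)$ is a finite sum in $\A_* \otimes M_*$. The commutative square of Definition~\ref{dfn_completecomodule} then gives
\[
\nu(\epsilon_*(\alpha)) = (1 \ctensor \epsilon_*)(\nu_{M_*}(\alpha)) = \sum_I \Sq^I_* \otimes \epsilon_*(\Sq^I_*(\alpha)),
\]
which is a finite sum in $\A_* \otimes R_+(M_*)$, witnessing $\epsilon_*(\alpha) \in R_+(M_*)^{\alg}$.

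For the harder inclusion $R_+(M_*)^{\alg} \subseteq \im(\epsilon_*)$, fix $\xi \in R_+(M_*)^{\alg}$. The preceding lemma equips $R_+(M_*)^{\alg}$ with a genuinely algebraic coaction $\nu^{\alg}$. I will next invoke the fundamental theorem of comodules to produce a finite-dimensional sub-$\A_*$-comodule $N_* \subseteq R_+(M_*)^{\alg}$ containing $\xi$: writing $\nu^{\alg}(\xi) = \sum_I \Sq^I_* \otimes \xi_I$ relative to an admissible-monomial basis of $\A_*$, coassociativity of $\nu^{\alg}$ forces the $\F_p$-span of the nonzero $\xi_I$ to be closed under $\nu^{\alg}$. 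The resulting $N_*$ is finite-dimensional, hence automatically bounded below and of finite type.

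I then regard the inclusion $j \: N_* \hookrightarrow R_+(M_*)$ as a morphism of complete $\A_*$-comodules, giving $N_*$ the discrete topology; continuity is automatic, and the comodule compatibility holds because $\nu^{\alg}|_{N_*}$ lifts the restriction of $\nu$ to $N_*$ through the inclusion $\A_* \otimes N_* \hookrightarrow \A_* \ctensor R_+(M_*)$. Applying Lemma~\ref{lem_homiso} to $j$ yields a unique factorization $j = \epsilon_* \circ g$ for some $\A_*$-comodule homomorphism $g \: N_* \to M_*$. Evaluating at $\xi$ gives $\xi = \epsilon_*(g(\xi)) \in \im(\epsilon_*)$, as desired.

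The one delicate point is that $R_+(M_*)^{\alg}$ is not known a priori to be bounded below or of finite type, so Lemma~\ref{lem_homiso} does not apply directly to the inclusion of the whole algebraic part. The fundamental theorem of comodules circumvents this obstacle by exhibiting $R_+(M_*)^{\alg}$ as a union of its finite-dimensional (hence bounded-below, finite-type) sub-$\A_*$-comodules, each of which does satisfy the hypotheses of the lemma.
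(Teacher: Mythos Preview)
Your proof is correct and follows essentially the same strategy as the paper: both use that $\epsilon_*$ respects the coaction for the easy inclusion, and for the reverse both invoke the fundamental theorem of comodules to produce a bounded-below finite-type sub-$\A_*$-comodule containing a given element, then apply Lemma~\ref{lem_homiso} to factor the inclusion through $\epsilon_*$. The only cosmetic difference is that the paper takes $N_* = \epsilon_*(M_*) + \langle \alpha \rangle$ whereas you take $N_*$ to be just the finite-dimensional comodule $\langle \xi \rangle$; your choice is slightly leaner and works for the same reason.
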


\begin{proof}
Let $L_*$ be any algebraic $\A_*$-comodule.  Given any $\alpha
\in L_*$, with coaction $\nu(\alpha) = \sum_I \Sq^I_* \otimes \alpha_I$,
let $\langle \alpha \rangle \subseteq L_*$ be the graded vector subspace
spanned by the $\alpha_I = \Sq^I_*(\alpha)$.  Here we are using the
notation appropriate for $p=2$; the case $p$ odd is completely similar.
Since $\nu(\alpha)$ is a finite sum, $\langle \alpha \rangle$ is a
finite dimensional subspace.  Furthermore, it is a sub $\A_*$-comodule,
since $\nu(\alpha_I) = \sum_J \Sq^J_* \otimes \Sq^J_*(\alpha_I)$ and
$\Sq^J_*(\alpha_I) = (\Sq^I \Sq^J)_*(\alpha)$ is a finite sum of terms
$\Sq^K_*(\alpha) = \alpha_K$.

Now consider the case $L_* = R_+(M_*)^{\alg}$.  It is clear that
$\epsilon_*(M_*) \subseteq R_+(M_*)^{\alg}$, since $M_*$ is an algebraic
$\A_*$-comodule and $\epsilon_*$ respects the coaction.  Let $\alpha
\in R_+(M_*)^{\alg}$ be any element, and consider the linear span
\[
N_* = \epsilon_*(M_*) + \langle \alpha \rangle \subseteq R_+(M_*)^{\alg} \,.
\]
It is bounded below and of finite type, so by Lemma~\ref{lem_homiso}
there is a unique lift $g_*$
\[
\xymatrix{
M_* \ar[rr]^-{\epsilon_*} & & R_+(M_*) \\
& N_* \ar[ul]^{g_*} \ar@{ >->}[ur]_{f_*}
}
\]
of the inclusion $f_* \: N_* \to R_+(M_*)$.
Hence $N_* \subseteq \epsilon_*(M_*)$, so in fact $\alpha \in
\epsilon_*(M_*)$.
\end{proof}

\section{The Tate construction} \label{sec_tate}
We recall the Tate construction of Greenlees, and its relation with
homotopy orbit and homotopy fixed point spectra.  We then show how it
can be expressed as the homotopy inverse limit of bounded below spectra,
in two equivalent ways.  This lets us make sense of the continuous
(co-)homology groups of the Tate construction.

We then describe the homological Tate spectral sequences.  There are two
types, one converging to the continuous homology of the Tate construction
and one converging to the continuous cohomology.  The terms of these spectral
sequences will be linearly dual to each other, but, as already noted in
\textsection \ref{sec_duality}, their target groups will only be dual
in a topologized sense.
The main properties of these spectral sequences are summarized
in Propositions~\ref{prop_cohomological}, \ref{prop_homological}
and~\ref{prop_homological2}.

\subsection{Equivariant spectra and various fixed point constructions}
We review some notions from stable equivariant homotopy theory, in the
framework of Lewis--May spectra \cite{LMS}.
Let $G$ be a compact Lie group, quite possibly finite, and let $\U$
be a complete $G$-universe.  We fix an identification $\U^G = \R^\infty$,
and write $i \: \R^\infty \to \U$ for the inclusion.

Let $G\S\U$ be the category of genuine $G$-spectra, and let
$G\S\R^\infty$ be the category of naive $G$-spectra.  Similarly, let
$\S\R^\infty$ be the category of (non-equivariant) spectra.
The restriction of universe functor $i^* \: G\S\U \to G\S\R^\infty$ has
a left adjoint, the extension of universe functor $i_* \: G\S\R^\infty
\to G\S\U$, see \cite{LMS}*{\textsection II.1}.

The functor $\S\R^\infty \to G\S\R^\infty$, giving a spectrum the
trivial $G$-action, has a left adjoint taking a naive $G$-spectrum $Y$
to the orbit spectrum $Y/G$, as well as a right adjoint taking $Y$ to the
fixed point spectrum $Y^G$.  For a genuine $G$-spectrum~$X$, the orbit
spectrum $X/G = (i^* X)/G$ and fixed point spectrum $X^G = (i^* X)^G$
are defined by first restricting to the underlying naive $G$-spectra.

Let $EG$ be a free, contractible $G$-CW complex.  Let $c \: EG_+ \to
S^0$ be the collapse map that sends $EG$ to the non-base point of $S^0$,
and let $\tEG$ be its mapping cone, so that we have a homotopy cofiber
sequence
\begin{equation} \label{equ_fundamentalsequence}
  EG_+ \overset{c}\to S^0 \to \tEG
\end{equation}
of based $G$-CW complexes.  The $n$-skeleton $\tEG{}^{(n)}$ of $\tEG$ is
then the mapping cone of the restricted collapse map $EG^{(n-1)}_+
\to S^0$, for each $n\ge0$.  We may and will assume that each skeleton
$EG^{(n-1)}$ is a finite $G$-CW complex.

\begin{dfn}
For each naive $G$-spectrum $Y$ let
$Y_{hG} = (EG_+ \wedge Y)/G$
be the \emph{homotopy orbit spectrum}, and let
$Y^{hG} = \map(EG_+, Y)^G$
be the \emph{homotopy fixed point} spectrum.
For each genuine $G$-spectrum $X$ let
$$
X_{hG} = (EG_+ \wedge i^* X)/G = (i^* X)_{hG}
$$
and
$$
X^{hG} = \map(EG_+, X)^G = (i^* X)^{hG}
$$
be defined by first restricting to the $G$-trivial universe.
Furthermore, let
$$
X^{tG} = [\tEG \wedge \map(EG_+, X)]^G
$$
be the \emph{Tate construction} on $X$.  This is the spectrum denoted
$\widehat{\mathbb{H}}(G, X)$ by B{\"o}kstedt and Madsen \cite{BM1994}
and $t_G(X)^G$ by Greenlees and May \cite{GM1995}.
\end{dfn}

The Segal conjecture is concerned with the map $\Gamma \: X^G
\to X^{hG}$ induced by $\map(c, 1) \: X \cong \map(S^0, X) \to
\map(EG_+, X)$ by passing to fixed points.  By smashing the cofiber
sequence~\eqref{equ_fundamentalsequence} with $\map(c, 1)$ and passing
to $G$-fixed points, we can embed this map in the following diagram,
consisting of two horizontal cofiber sequences:
$$
\xymatrix{
[EG_+ \wedge X]^G \ar[r] \ar[d]_\simeq &
X^G \ar[r] \ar[d]^{\Gamma} &
[\tEG \wedge X]^G \ar[d]^{\hat\Gamma} \\
[EG_+ \wedge \map(EG_+, X)]^G \ar[r] &
\map(EG_+, X)^G \ar[r] &
[\tEG \wedge \map(EG_+, X)]^G
}
$$
The adjunction counit $\epsilon \: i_* i^* X \to X$ and the map
$\map(c, 1)$ are both $G$-maps and non-equivariant equivalences.  By the
$G$-Whitehead theorem, both maps
$$
1\wedge\epsilon \: i_*(EG_+ \wedge i^*X) = EG_+ \wedge i_* i^* X
	\to EG_+ \wedge X
$$
and
$$
1\wedge\map(c,1) \: EG_+ \wedge X \to EG_+ \wedge \map(EG_+, X)
$$
are genuine $G$-equivalences.  Hence we have the equivalence indicated on
the left.  Furthermore, there is an Adams transfer equivalence
\begin{equation} \label{equ_adamstransfer}
\tilde\tau \: (\Sigma^{\ad G} EG_+ \wedge i^* X)/G
\overset{\simeq}\longto [i_*(EG_+ \wedge i^* X)]^G \,,
\end{equation}
where $\ad G$ denotes the adjoint representation of $G$.  See
\cite{LMS}*{\textsection II.2} and \cite{GM1995}*{Part~I} for further
details.

In the cases of interest to us, when $G$ is discrete or abelian,
the adjoint representation is trivial so that $\Sigma^{\ad
G} = \Sigma^{\dim G}$.  Hence we may rewrite the diagram above as the
following \emph{norm--restriction} diagram
\begin{equation} \label{equ_fund1}
\xymatrix{
\Sigma^{\dim G} X_{hG} \ar[rr]^-N \ar[d]_{=} &&
X^G \ar[rr]^-R \ar[d]^{\Gamma} &&
[\tEG \wedge X]^G \ar[d]^{\hat\Gamma} \\
\Sigma^{\dim G} X_{hG} \ar[rr]^-{N^h} &&
X^{hG} \ar[rr]^-{R^h} &&
X^{tG}
}
\end{equation}
for any genuine $G$-spectrum $X$.
We note that the adjunction counit $\epsilon \: i_* i^* X \to X$ induces
equivalences $(i_* i^* X)_{hG} \simeq X_{hG}$ and $(i_* i^* X)^{hG} \simeq
X^{hG}$, hence $(i_* i^* X)^{tG} \simeq X^{tG}$, so the Tate construction
on $X$ only depends on the naive $G$-spectrum underlying $X$.

The spectra in the lower row have been studied by means of spectral
sequences converging to their homotopy groups, e.g.~in \cite{BM1994},
\cite{HM1997}, \cite{R1999}, \cite{AR2002} and \cite{HM2003}.  These
spectral sequences arise in the case of the homotopy orbit and fixed
point spectra by choosing a filtration of $EG$, and by a filtration of
$\tEG$ introduced by Greenlees \cite{G1987} in the case of the Tate
spectrum $X^{tG}$.  We shall instead be concerned with the spectral
sequences that arise by applying homology in place of homotopy.

\subsection{Tate cohomology and the Greenlees filtration of $\tEG$}
We recall the definition of the Tate cohomology groups from
\cite{CE}*{XII.3}, and the associated Tate homology groups.  Let $G$
be a finite group, let $\F_pG = \F_p[G]$ be its group algebra, and
let $(P_*, d_*)$ be a \emph{complete resolution} of the trivial $\F_pG$-module
$\F_p$ by free $\F_pG$-modules.  This is a commutative diagram
$$
\xymatrix{
\cdots \ar[r] & P_1 \ar[r]^{d_1} & P_0 \ar[r]^{d_0} \ar@{->>}[d]
  & P_{-1} \ar[r]^{d_{-1}} & P_{-2} \ar[r] & \cdots \\
&& \F_p \ar@{ >->}[ur]
}
$$
of $\F_pG$-modules, where the $P_n$'s are free and the horizontal sequence
is exact.  The image of $d_0$ is identified with $\F_p$, as indicated.

\begin{dfn}
Given an $\F_pG$-module $M$ the \emph{Tate cohomology} and \emph{Tate
homology groups} are defined by
$$
\tH^n(G; M) = H^n(\Hom_{\F_pG}(P_*, M))
$$
and
$$
\tH_n(G; M) = H_n(P_* \otimes_{\F_pG} M) \,,
$$
respectively, where $(P_*, d_*)$ is a complete $\F_pG$-resolution.
(To form the balanced tensor product, we turn $P_*$ into a complex of
right $\F_pG$-modules by means of the group inverse.)  These groups
are independent of the chosen complete $\F_pG$-resolution, and there
are isomorphisms
$$
\tH^n(G; M) \cong \tH_{-n-1}(G; M)
$$
and
$$
\Hom(\tH_n(G; M), \F_p) \cong \tH^n(G; \Hom(M, \F_p))
$$
for all integers $n$.  Note that we do not follow the shifted grading
convention for Tate homology given in \cite{GM1995}*{11.2}.
\end{dfn}

The topological analogue of a complete resolution is a bi-infinite
filtration of $\tEG$, in the category of $G$-spectra, which was introduced
by Greenlees \cite{G1987}.  We recall the details of the construction.
For brevity we shall not distinguish notationally between a
based $G$-CW complex and its suspension $G$-CW spectrum.  For integers
$n\ge0$ we let $\tF{n} = \tEG{}^{(n)}$ be (the suspension spectrum of) the
$n$-skeleton of $\tEG$, while $\tF{-n} = D(\tF{n}) = \map(\tEG{}^{(n)}, S)$
is its functional dual.  These definitions agree for $n=0$, as $\tF{0}
= S$ is the sphere spectrum.  Splicing the skeleton filtration of $\tEG$
with its functional dual, we get the finite terms in the following diagram
\begin{equation} \label{equ_greenleesfiltration}
D(\tEG) \to \dots \to \tF{-1} \to \tF{0} = S
	\to \tF{1} \to \tF{2} \to \dots \to \tEG \,,
\end{equation}
which we call the \emph{Greenlees filtration}.
Both $\tEG \simeq \hocolim_n \tF{n}$ and $D(\tEG) \simeq \holim_n \tF{n}$
are non-equivariantly contractible.

Applying homology to this filtration gives a spectral sequence with
$E^1_{s,t}=H_{s+t}(\tF{s} / \tF{s-1})$ that converges to $H_*(\tEG,
D(\tEG)) = 0$.  It is concentrated on the horizontal axis, since
$\tF{n}/\tF{n-1}$ is a finite wedge sum of $G$-free $n$-sphere spectra
$G_+ \wedge S^n$ for each integer $n$.  Hence the spectral sequence
collapses at the $E^2$-term, and we get a long exact sequence
\begin{equation} \label{equ_geometricresolution}
\xymatrix{
\dots \ar[r] & H_2(\tF{2}/\tF{1}) \ar[r]^-{d^1_{2,0}} & H_1(\tF{1}/\tF{0})
\ar[r]^-{d^1_{1,0}} \ar@{->>}[d] & H_0(\tF{0}/\tF{-1}) \ar[r] & \dots \\
&& H_0(S) \ar@{ >->}[ru]
}
\end{equation}
of finitely generated free $\F_pG$-modules.  Letting
$$
P_n = H_{n+1}(\tF{n+1} / \tF{n})
$$
and $d_n = d^1_{n+1,0}$ for all integers $n$ yields a complete resolution
$(P_*, d_*)$ of $\F_p = H_0(S)$.

\subsection{Continuous homology of the Tate construction}
\label{sec_tatefiltration}
Let $G$ be a finite group and let $X$ be a genuine $G$-spectrum.  By means
of the Greenlees filtration, we may filter the Tate construction $X^{tG}$
by a tower of spectra.

\begin{dfn}
For each integer $n$ let $\tEG/\tF{n-1}$ be the homotopy cofiber of the
map $\tF{n-1} \to \tEG$, and define
\begin{align*}
X^{tG}[-\infty,n{-}1] &= [\tF{n-1} \wedge \map(EG_+, X)]^G \\
X^{tG}[n] = X^{tG}[n, \infty] &= [\tEG/\tF{n-1} \wedge \map(EG_+, X)]^G \,.
\end{align*}
\end{dfn}

Smashing the cofiber sequence $\tF{n-1} \to \tEG \to \tEG/\tF{n-1}$
with $\map(EG_+, X)$ and taking $G$-fixed points, we get a cofiber
sequence
$$
X^{tG}[-\infty,n{-}1] \to X^{tG} \to X^{tG}[n, \infty]
$$
for each integer $n$.  The maps $\tF{n-1} \to \tF{n}$ in the Greenlees
filtration~\eqref{equ_greenleesfiltration} induce maps between these
cofiber sequences, which combine to the ``finite $n$ parts'' of the
following horizontal tower of vertical cofiber sequences:
\begin{equation} \label{equ_tatetowers}
\xymatrix{
{*} \ar[r] \ar[d] &
\dots \ar[r] &
X^{tG}[-\infty,n{-}1] \ar[r] \ar[d] &
X^{tG}[-\infty,n] \ar[r] \ar[d] &
\dots \ar[r] &
X^{tG} \ar[d]^{=} \\
X^{tG} \ar[r]^{=} \ar[d]_{=} &
\dots \ar[r]^{=} &
X^{tG} \ar[r]^{=} \ar[d] &
X^{tG} \ar[r]^{=} \ar[d] &
\dots \ar[r]^{=} &
X^{tG} \ar[d] \\
X^{tG} \ar[r] &
\dots \ar[r] &
X^{tG}[n,\infty] \ar[r] &
X^{tG}[n{+}1,\infty] \ar[r] &
\dots \ar[r] &
{*}
}
\end{equation}

\begin{lemma} \label{lem_twotowers}
Let $X$ be a $G$-spectrum.  Then
$$
\holim_{n\to-\infty} X^{tG}[-\infty,n] \simeq *
\qquad\text{and}\qquad
\hocolim_{n\to\infty} X^{tG}[n,\infty] \simeq *
$$
so
$$
\holim_{n\to-\infty} X^{tG}[n,\infty] \simeq X^{tG}
\qquad\text{and}\qquad
\hocolim_{n\to\infty} X^{tG}[-\infty,n] \simeq X^{tG} \,.
$$
\end{lemma}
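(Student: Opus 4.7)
The plan is to prove the two vanishing statements directly; once established, the remaining two equivalences fall out by applying $\holim_n$ (respectively $\hocolim_n$) to the cofiber sequences $X^{tG}[-\infty,n{-}1] \to X^{tG} \to X^{tG}[n,\infty]$ of~\eqref{equ_tatetowers}, with middle term constant at $X^{tG}$.

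For $\hocolim_{n\to\infty} X^{tG}[n,\infty] \simeq *$, I would start from the defining cofiber sequence $\tF{n-1} \to \tEG \to \tEG/\tF{n-1}$ of the Greenlees filtration. Passing to $\hocolim_n$ and invoking $\tEG \simeq \hocolim_n \tF{n}$, the first map becomes an equivalence, forcing $\hocolim_n(\tEG/\tF{n-1}) \simeq *$. Smashing with the fixed $G$-spectrum $\map(EG_+,X)$ preserves sequential hocolim, and the genuine $G$-fixed-point functor preserves sequential hocolim of $G$-spectra (for finite $G$, e.g.~via the mapping telescope model, or by exactness on cofiber sequences), delivering the claim.

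For $\holim_{n\to-\infty} X^{tG}[-\infty,n] \simeq *$, the key is that for $n \le 0$, $\tF{n} = D(\tEG^{(-n)})$ is the functional dual of the \emph{finite} $G$-CW spectrum $\tEG^{(-n)}$. Applying the identity $D(A) \wedge Y \simeq \map(A,Y)$ for finite~$A$, together with the smash--function adjunction,
$$X^{tG}[-\infty,n] \simeq [\map(\tEG^{(-n)} \wedge EG_+,\,X)]^G.$$
Since $\map(-,X)$ turns hocolim into holim in its first variable and $(-)^G$ preserves limits,
$$
\holim_{n\to-\infty} X^{tG}[-\infty,n]
\simeq \bigl[\map\bigl(\hocolim_{n\to-\infty}(\tEG^{(-n)} \wedge EG_+),\,X\bigr)\bigr]^G
\simeq [\map(\tEG \wedge EG_+,\,X)]^G .
$$
It then suffices to observe $\tEG \wedge EG_+ \simeq *$: smashing $EG_+ \to S^0 \to \tEG$ with $EG_+$, the first map is a $G$-equivalence because the collapse $c$ is a non-equivariant equivalence and $EG_+ \wedge -$ carries non-equivariant equivalences of $G$-spectra to genuine $G$-equivalences ($EG_+$ being $G$-free; this is exactly the $G$-Whitehead argument already used in the discussion following~\eqref{equ_fundamentalsequence}).

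The main technical obstacle is the interchange $\holim_n [D(\tEG^{(-n)}) \wedge Y]^G \simeq [\map(\hocolim_n \tEG^{(-n)},Y)]^G$, which rests precisely on the finiteness of each skeleton $\tEG^{(-n)}$ for $n \le 0$; after that, everything reduces to the standard identity $EG_+\wedge\tEG\simeq *$. With both vanishings in hand, the two ``therefore'' equivalences follow by applying $\holim_n$ or $\hocolim_n$ to the three-term cofiber sequence, using that these operations preserve cofiber sequences in the stable category.
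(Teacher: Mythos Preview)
Your argument for $\holim_{n\to-\infty} X^{tG}[-\infty,n]\simeq *$ is essentially identical to the paper's: use dualizability of the finite skeleta $\tF{m}=\tEG{}^{(m)}$ to convert $D(\tF{m})\wedge \map(EG_+,X)$ into $\map(\tF{m}\wedge EG_+,X)$, pass the holim inside, and invoke $\tEG\wedge EG_+\simeq *$.

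For $\hocolim_{n\to\infty} X^{tG}[n,\infty]\simeq *$ you take a genuinely different route. You argue that $\hocolim_n(\tEG/\tF{n-1})\simeq *$, smash with $\map(EG_+,X)$, and then pull the sequential hocolim through $(-)^G$. The paper instead observes that each $\tEG/\tF{n}$ is $G$-free (for $n\ge0$ it is $\Sigma(EG/EG^{(n-1)})$), so by the $G$-Whitehead theorem one may replace $\map(EG_+,X)$ by $X$ and then by $i_*i^*X$, after which the Adams transfer equivalence~\eqref{equ_adamstransfer} converts fixed points to orbits; since orbits are a left adjoint they manifestly commute with hocolim. Your argument is shorter and avoids the transfer, at the cost of appealing to the fact that genuine $G$-fixed points preserve sequential homotopy colimits. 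That fact is true (for finite $G$ the fixed-point functor on spaces is a finite limit and hence commutes with filtered colimits, and the Lewis--May spectrum-level fixed points are computed spacewise after restriction to the trivial universe), but your parenthetical ``by exactness on cofiber sequences'' does not by itself justify it; the mapping-telescope justification is the right one. The paper's detour through the transfer makes the commutation with hocolim completely explicit and reuses machinery already set up in~\eqref{equ_adamstransfer}.
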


\begin{proof}
For negative $n$, $\tF{n} = D(\tF{m})$ for $m=-n$, and there
is a $G$-equivariant equivalence $\nu \: D(\tF{m}) \wedge Z
\overset{\simeq}\longto \map(\tF{m}, Z)$ for any $G$-spectrum $Z$, since
the finite $G$-CW spectrum $\tF{m}$ is dualizable \cite{LMS}*{III.2.8}.
Hence
\begin{multline*}
\holim_{n\to-\infty} X^{tG}[-\infty,n]
= \holim_{n\to-\infty} \, [\tF{n} \wedge \map(EG_+, X)]^G \\
= \holim_{m\to\infty} \, [D(\tF{m}) \wedge \map(EG_+, X)]^G
\simeq \holim_{m\to\infty} \map(\tF{m} \wedge EG_+, X)^G \\
\cong \map(\hocolim_{m\to\infty} \tF{m} \wedge EG_+, X)^G
\simeq \map(\tEG \wedge EG_+, X)^G \,,
\end{multline*}
which is contractible because $\tEG \wedge EG_+$ is $G$-equivariantly
contractible.

For the second claim we use that $\tEG/\tF{n}$ is a free
$G$-CW spectrum.  Indeed, for $n\ge0$, $\tEG/\tF{n} \simeq \Sigma
(EG/EG^{(n-1)})$.  Thus, by the $G$-Whitehead theorem and the Adams
transfer equivalence~\eqref{equ_adamstransfer} we have
\begin{multline*}
\hocolim_{n\to\infty} X^{tG}[n{+}1,\infty]
= \hocolim_{n\to\infty} \, [\tEG/\tF{n} \wedge \map(EG_+, X)]^G \\
\simeq \hocolim_{n\to\infty} \, [\tEG/\tF{n} \wedge X)]^G
\simeq \hocolim_{n\to\infty} \, [\tEG/\tF{n} \wedge i_* i^* X)]^G \\
\simeq \hocolim_{n\to\infty} \, (\tEG/\tF{n} \wedge i^* X)/G
\cong (\hocolim_{n\to\infty} \tEG/\tF{n} \wedge i^* X)/G \,,
\end{multline*}
which is contractible since $\hocolim_{n\to\infty} \, (\tEG/\tF{n})$
is $G$-equivariantly contractible.
The remaining claims follow, since the homotopy limit of a
fiber sequence is a fiber sequence, and the homotopy colimit
of a cofiber sequence is a cofiber sequence.
\end{proof}

Hereafter we abbreviate $X^{tG}[n, \infty]$ to $X^{tG}[n]$.  We will refer
to the lower horizontal tower $\{X^{tG}[n]\}_n$ in~\eqref{equ_tatetowers}
as the \emph{Tate tower}.  The following two lemmas should be compared
with the sequences~\eqref{equ_cohomtower} and~\eqref{equ_homtower}.

\begin{lemma} \label{lem_otherlimitsvanish}
Let $X$ be a $G$-spectrum.  Then
$$
\lim_{n\to\infty} H^*(X^{tG}[n]) =
\Rlim_{n\to\infty} H^*(X^{tG}[n]) =
\colim_{n\to\infty} H_*(X^{tG}[n]) = 0 \,.
$$
\end{lemma}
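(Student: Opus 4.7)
The plan is to deduce all three vanishing statements from Lemma \ref{lem_twotowers}, which asserts $\hocolim_{n\to\infty} X^{tG}[n,\infty] \simeq *$. Combined with the standard interaction between sequential homotopy colimits of spectra and ordinary (co-)homology, everything follows without any genuine difficulty.

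For the homology statement I would use that mod~$p$ homology is a homology theory in the Whitehead sense, so it commutes with arbitrary filtered homotopy colimits of spectra. Applying this to the sequential system $\{X^{tG}[n]\}_n$ with structure maps $X^{tG}[n] \to X^{tG}[n{+}1]$ from the bottom row of~\eqref{equ_tatetowers}, Lemma~\ref{lem_twotowers} gives
$$
\colim_{n\to\infty} H_*(X^{tG}[n])
\cong H_*\bigl(\hocolim_{n\to\infty} X^{tG}[n]\bigr)
\cong H_*(*) = 0 \,.
$$

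For the two cohomological statements I would argue simultaneously. For any sequential system of spectra $\{Y_n\}_n$ there is a Milnor short exact sequence
$$
0 \to \Rlim_n H^{s-1}(Y_n) \to H^s\bigl(\hocolim_n Y_n\bigr)
\to \lim_n H^s(Y_n) \to 0 \,,
$$
obtained by mapping the standard homotopy cofiber sequence $\bigvee_n Y_n \to \bigvee_n Y_n \to \hocolim_n Y_n$ into the Eilenberg--Mac\,Lane spectrum $H\F_p$.  Taking $Y_n = X^{tG}[n]$ and applying Lemma~\ref{lem_twotowers} again, the middle term vanishes in every degree~$s$, so both the inverse limit and its right derived functor must vanish.

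The main obstacle is really just a conceptual one: since the spectra $X^{tG}[n]$ are not a priori bounded below nor of finite type, one cannot appeal directly to the duality discussion of \textsection\ref{sec_duality} that identifies $\colim H_*$ with the dual of $\lim H^*$; instead one must invoke the Milnor sequence and the exactness of filtered colimits of $\F_p$-vector spaces separately. Once this point is observed, the argument is essentially immediate from Lemma~\ref{lem_twotowers}.
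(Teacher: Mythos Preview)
Your proof is correct and follows essentially the same approach as the paper: the paper also invokes Lemma~\ref{lem_twotowers} together with the Milnor $\lim$--$\Rlim$ short exact sequence to handle the cohomological statements, and the isomorphism $\colim_n H_*(X^{tG}[n]) \cong H_*(\hocolim_n X^{tG}[n])$ for the homological one. Your closing remark about why duality alone is insufficient is a nice clarification, but the argument itself matches the paper's.
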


\begin{proof}
In cohomology, we have a Milnor $\lim$-$\Rlim$ short exact
sequence
$$
0 \to \Rlim_n H^{*-1}(X^{tG}[n])
\to H^*(\hocolim_n X^{tG}[n]) \to \lim_n H^*(X^{tG}[n]) \to 0 \,.
$$
By Lemma~\ref{lem_twotowers} the middle term is zero, hence so are
the other two terms.  In homology, we have the isomorphism
$$
\colim_{n\to\infty} H_*(X^{tG}[n])
\cong H_*(\hocolim_{n\to\infty} X^{tG}[n]) \,.
$$
By the same lemma the right hand side is zero.
\end{proof}

\begin{lemma}
Suppose that $X$ is bounded below and of finite type over $\F_p$.
Then each spectrum $X^{tG}[n]$ is bounded below and of finite type
over $\F_p$.  Hence
$$
\Rlim_{n\to-\infty} H_*(X^{tG}[n]) = 0 \,.
$$
\end{lemma}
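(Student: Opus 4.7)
The plan is to exhibit $X^{tG}[n]$ as a filtered homotopy colimit of finite spectra whose $G$-fixed points are built from suspensions of $X$. Since $\tEG \simeq \hocolim_{m\to\infty}\tF{m}$, we have $\tEG/\tF{n-1} \simeq \hocolim_{m}\tF{m}/\tF{n-1}$. Smashing with $\map(EG_+, X)$ and taking $G$-fixed points commutes with such filtered homotopy colimits along cofibrations, so
\[
X^{tG}[n] \simeq \hocolim_{m\to\infty} [\tF{m}/\tF{n-1} \wedge \map(EG_+, X)]^G.
\]

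For each $m \geq n-1$, the skeletal filtration in~\eqref{equ_geometricresolution} writes $\tF{m}/\tF{n-1}$ as an iterated extension whose successive subquotients $\tF{k}/\tF{k-1}$ are finite wedges of free $G$-cells $G_+\wedge S^k$ for $n\leq k\leq m$. I would compute the $G$-fixed points of $G_+ \wedge S^k \wedge \map(EG_+, X)$ by first using the change of coordinates $G_+\wedge W \cong G_+\wedge i^*W$ and then the Adams transfer~\eqref{equ_adamstransfer} (with $\ad G = 0$ since $G$ is finite), obtaining an equivalence with $\Sigma^k \, i^*\map(EG_+,X) \simeq \Sigma^k X$; the last step uses that $EG$ is non-equivariantly contractible. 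Since $X$ is bounded below and of finite type, so is each $\Sigma^k X$, and a finite induction on $m-n$ using the resulting cofiber sequences shows that $[\tF{m}/\tF{n-1} \wedge \map(EG_+, X)]^G$ is bounded below and of finite type for every finite $m$.

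Finally, the cofiber of the map $[\tF{m}/\tF{n-1}\wedge\map(EG_+,X)]^G \to [\tF{m+1}/\tF{n-1}\wedge\map(EG_+,X)]^G$ is equivalent to a finite wedge of copies of $\Sigma^{m+1}X$, which becomes arbitrarily highly connected as $m\to\infty$. Hence the tower stabilizes in each homological degree, and $X^{tG}[n]$ inherits from its finite stages the property of being bounded below and of finite type over $\F_p$. The vanishing of $\Rlim_{n\to-\infty}H_*(X^{tG}[n])$ then follows at once: in each total degree~$k$, the inverse system $\{H_k(X^{tG}[n])\}_n$ consists of finite-dimensional $\F_p$-vector spaces, which trivially satisfies Mittag--Leffler. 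The main technical point to check carefully will be the identification $[G_+\wedge W]^G\simeq i^*W$ for a genuine $G$-spectrum $W$, together with the commutation of the $G$-fixed-point functor with the filtered homotopy colimit and with the cofiber sequences of the Greenlees filtration; both are standard for $G$-CW spectra.
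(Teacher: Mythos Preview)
Your proposal is correct and follows essentially the same route as the paper: introduce the finite stages $X^{tG}[n,m] = [\tF{m}/\tF{n-1}\wedge\map(EG_+,X)]^G$, identify the cofiber of each inclusion as a finite wedge of copies of $\Sigma^m X$ (via the freeness of $\tF{m}/\tF{m-1}$), conclude by induction and increasing connectivity, and then invoke Mittag--Leffler for towers of finite groups. The paper's version is terser and simply asserts the cofiber identification, whereas you spell out the mechanism via $[G_+\wedge W]^G\simeq i^*W$ and the transfer; this extra detail is harmless and correct.
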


\begin{proof}
Let $X^{tG}[n,m] = [\tF{m}/\tF{n-1} \wedge \map(EG_+, X)]^G$.  For $m
\ge n$ there is a cofiber sequence
$$
X^{tG}[n,m{-}1] \to X^{tG}[n,m] \to \bigvee \Sigma^m X \,,
$$
with one copy of $\Sigma^m X$ in the wedge sum for each of the finitely
many $G$-free $m$-cells in $\tEG$.  Since the connectivity of $\Sigma^m
X$ grows to infinity with $m$, the first claim of the lemma follows by
induction on $m$.  The derived limit of any tower of finite groups is
zero, which gives the second conclusion.
\end{proof}

We use the lower horizontal tower $\{X^{tG}[n]\}_n$
in~\eqref{equ_tatetowers} to define the continuous (co-)homology
of $X^{tG}$, as in Definition~\ref{dfn_continuous} and
Corollary~\ref{cor_HcY}.

\begin{dfn}
Let $G$ be a finite group and $X$ a $G$-spectrum whose underlying
non-equivariant spectrum is bounded below and of finite type over $\F_p$.
By the \emph{continuous cohomology} of $X^{tG}$ we mean the $\A$-module
$$
H_c^*(X^{tG}) = \colim_{n\to-\infty} H^*(X^{tG}[n]) \,.
$$
By the \emph{continuous homology} of $X^{tG}$ we mean the complete
$\A_*$-comodule
$$
H^c_*(X^{tG}) = \lim_{n\to-\infty} H_*(X^{tG}[n]) \,.
$$
There is a natural isomorphism $\Hom(H_c^*(X^{tG}), \F_p) \cong
H^c_*(X^{tG})$ of complete $\A_*$-comodules, as well as a natural
isomorphism $\Hom^c(H^c_*(X^{tG}), \F_p) \cong H_c^*(X^{tG})$ of
$\A$-modules.
\end{dfn}

\begin{remark}
Different $G$-CW structures on $EG$ will give rise to different
Greenlees filtrations and Tate towers, but by cellular approximation
any two choices give pro-isomorphic towers in homology.  The continuous
homology, as a complete $\A_*$-comodule, is therefore independent of
the choice.  Likewise for continuous cohomology.
\end{remark}

We end this subsection by giving a reformulation of the Tate construction,
known as Warwick duality \cite{G1994}*{\textsection 4}, which will be
used in \textsection \ref{subsec_topsinger} when making a topological
model for the Singer construction.

\begin{prop}[\cite{GM1995}*{2.6}] \label{prop_tate2}
There is a natural chain of equivalences
$$
\Sigma \map(\tEG, EG_+ \wedge X)^G \simeq [\tEG \wedge \map(EG_+, X)]^G
	= X^{tG} \,.
$$
\end{prop}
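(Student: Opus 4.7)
The plan is to follow the approach of Greenlees and May~\cite{GM1995}*{2.6}: realize both sides through the Greenlees filtration and exploit Spanier--Whitehead duality on each finite subquotient, then match up the two resulting towers.

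First, each finite skeleton $\tF{n}$ is a finite $G$-CW spectrum, hence dualizable \cite{LMS}*{III.2.8}, with dual $D(\tF{n}) = \tF{-n}$ built into the Greenlees filtration~\eqref{equ_greenleesfiltration}. For any $G$-spectrum $Z$, this gives natural equivalences $\map(\tF{n}, Z) \simeq \tF{-n} \wedge Z$ on each finite level. Writing $\tEG \simeq \hocolim_n \tF{n}$ and passing to homotopy limits,
\[
\map(\tEG, EG_+ \wedge X)^G \simeq \holim_n \, [\tF{-n} \wedge EG_+ \wedge X]^G.
\]
Since $\tF{-n} \wedge EG_+ \wedge X$ is $G$-free (via the $EG_+$ factor), the Adams transfer~\eqref{equ_adamstransfer} identifies each term with the homotopy orbit spectrum $(\tF{-n} \wedge X)_{hG}$.

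Second, I would invoke the $G$-equivalence $EG_+ \wedge X \simeq EG_+ \wedge \map(EG_+, X)$ to replace $X$ by $\map(EG_+, X)$ inside these homotopy orbits. This equivalence holds because the cofiber of the unit $X \to \map(EG_+, X)$ is $\Sigma \map(\tEG, X)$, and smashing with $EG_+$ annihilates this: for every subgroup $H \le G$, the Adams transfer identifies $[EG_+ \wedge \map(\tEG, X)]^H$ with $\map(\tEG, X)_{hH}$, which vanishes non-equivariantly since $\tEG$ is non-equivariantly contractible. Hence all $H$-fixed-point homotopy groups vanish, so the cofiber is $G$-trivial.

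Third, on the other side, Lemma~\ref{lem_twotowers} presents
\[
X^{tG} = [\tEG \wedge \map(EG_+, X)]^G \simeq \holim_m \, [(\tEG/\tF{m-1}) \wedge \map(EG_+, X)]^G.
\]
The two towers are related via the cofiber sequences $\tF{m-1} \to \tEG \to \tEG/\tF{m-1}$ smashed with $\map(EG_+, X)$ and then $G$-fixed, where on the sub-skeletal side the Adams transfer again gives $[\tF{m-1} \wedge EG_+ \wedge \map(EG_+, X)]^G \simeq (\tF{m-1} \wedge \map(EG_+, X))_{hG}$. Comparing the two towers shows them to be cofinally equivalent up to a single suspension, this suspension arising from the splice between the positive-index (quotient $\tEG/\tF{m-1}$) and negative-index (dual $\tF{-n}$) halves of the Greenlees filtration, where the convention $D(\tF{n}) = \tF{-n}$ enforces a one-degree shift.

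The main obstacle is this last compatibility step: verifying that the Spanier--Whitehead duality equivalences $\map(\tF{n}, Z) \simeq \tF{-n} \wedge Z$ assemble coherently with the connecting maps in the Greenlees tower, and that the cumulative effect of the splice is exactly one suspension. Once that is pinned down, taking homotopy limits yields the desired equivalence $\Sigma \map(\tEG, EG_+ \wedge X)^G \simeq X^{tG}$.
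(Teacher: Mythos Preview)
Your approach through the Greenlees filtration and Spanier--Whitehead duality is substantially more elaborate than what the paper does, and the place you flag as ``the main obstacle'' is a genuine gap, not just a detail to be filled in.

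The paper's argument is a single commutative diagram of $G$-spectra connecting the map
\[
\map(c,1)\colon EG_+ \wedge X \longrightarrow \map(EG_+, EG_+ \wedge X)
\]
to the map
\[
c \wedge 1\colon EG_+ \wedge \map(EG_+, X) \longrightarrow \map(EG_+, X)
\]
through a zig-zag of $G$-equivalences (all of the form $EG_+ \wedge (-) \simeq EG_+ \wedge (-)$, verified by the $G$-Whitehead theorem). Taking horizontal cofibers and then $G$-fixed points gives the result directly. The suspension appears because the cofiber of $\map(c,1)$ is $\Sigma\,\map(\tEG, EG_+ \wedge X)$ --- here $\map(\tEG,-)$ sits as the \emph{fiber} of $\map(c,1)$, so passing to the cofiber costs one suspension --- whereas the cofiber of $c \wedge 1$ is $\tEG \wedge \map(EG_+, X)$ with no shift.

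Your explanation of the suspension, that ``the convention $D(\tF{n}) = \tF{-n}$ enforces a one-degree shift,'' is not correct: that identification is a definition and carries no intrinsic shift. What you are really attempting is to prove the tower-level refinement (the paper's Lemma~\ref{lem_tatecomparison}) and then pass to homotopy limits, but even there the paper does not extract the suspension from duality bookkeeping. It builds a second explicit diagram (analogous to the one above, with $EG^{(m-1)}_+$ in place of $EG_+$ on the target side) and again takes horizontal cofibers. Your steps~1--3 are fine, but step~4 as written does not produce the suspension; you would still need a diagram of the paper's type to identify the two cofibers, at which point the filtration detour has bought nothing for this proposition.
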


\begin{proof}
We have a commutative diagram of $G$-spectra
\begin{equation} \label{equ_abs}
\xymatrix{
EG_+ \wedge X \ar[r]^-{\map(c,1\wedge1)} & \map(EG_+, EG_+\wedge X) \\
EG_+ \wedge EG_+ \wedge X \ar[u]^{c\wedge1\wedge1}_\simeq \ar[r]
	\ar[d]_{1\wedge\map(c,c\wedge1)}^\simeq
& \map(EG_+, EG_+\wedge X) \ar[u]_{=} \ar[d]^{\map(1,c\wedge1)}_\simeq \\
EG_+ \wedge \map(EG_+, X) \ar[r]^-{c\wedge1} & \map(EG_+,X) \,.
}
\end{equation}
The maps labeled $\simeq$ are $G$-equivalences.  The proposition
follows by taking horizontal (homotopy) cofibers and fixed points.
\end{proof}

We now strengthen this to a statement about towers.

\begin{lemma} \label{lem_tatecomparison}
For each integer $m$ there is a natural chain of equivalences
$$
\Sigma \map(\tF{m}, EG_+ \wedge X)^G \simeq [\tEG/\tF{-m}
	\wedge \map(EG_+, X)]^G = X^{tG}[1{-}m]
$$
connecting the tower
$$
\Sigma \map(\tEG, EG_+ \wedge X)^G \to \dots \to
\Sigma \map(\tF{m+1}, EG_+ \wedge X)^G \to
\Sigma \map(\tF{m}, EG_+ \wedge X)^G \to \dots
$$
to the Tate tower
$$
X^{tG} \to \dots \to X^{tG}[-m] \to X^{tG}[1{-}m] \to \dots \,.
$$
\end{lemma}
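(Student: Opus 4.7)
My plan is to prove the lemma by adapting the Warwick duality argument of Proposition~\ref{prop_tate2} to the filtered setting, so that it respects the Greenlees filtration.

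The Tate spectrum $X^{tG}$ fits into two naturally occurring cofiber sequences of spectra. The first, obtained by smashing the $G$-cofiber sequence $\tF{-m} \to \tEG \to \tEG/\tF{-m}$ with $\map(EG_+, X)$ and taking $G$-fixed points, reads
\[
[\tF{-m}\wedge\map(EG_+, X)]^G \to X^{tG} \to X^{tG}[1-m].
\]
The second is produced by applying the contravariant functor $\map(-, EG_+\wedge X)^G$ to the $G$-cofiber sequence $\tF{m} \to \tEG \to \tEG/\tF{m}$, suspending once to convert the resulting fiber sequence into a cofiber sequence, and identifying the middle term with $X^{tG}$ via Proposition~\ref{prop_tate2}:
\[
\Sigma\map(\tEG/\tF{m}, EG_+\wedge X)^G \to X^{tG} \to \Sigma\map(\tF{m}, EG_+\wedge X)^G.
\]

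Next I would establish a natural equivalence between the left-hand terms of these two cofiber sequences, compatible with the maps into~$X^{tG}$:
\[
[\tF{-m} \wedge \map(EG_+, X)]^G \simeq \Sigma\map(\tEG/\tF{m}, EG_+\wedge X)^G.
\]
Together with the identity on $X^{tG}$, this forces an equivalence of the right-hand cofibers $X^{tG}[1-m] \simeq \Sigma\map(\tF{m}, EG_+\wedge X)^G$, which is exactly the statement of the lemma. The equivalence of the fiber terms is itself a Warwick-type duality for the complementary filtration pieces, which I would prove by rerunning the zigzag of diagram~\eqref{equ_abs} with the cofiber sequence $EG_+ \to S^0 \to \tEG$ replaced throughout by the pair $\tF{m} \to \tEG \to \tEG/\tF{m}$. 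The vertical $G$-equivalences in the filtered diagram persist because they rely only on the non-equivariant contractibility of $EG_+ \wedge \tEG$, a property that transfers to the filtered pieces through the cofiber sequences $\tF{\pm m} \to \tEG \to \tEG/\tF{\pm m}$.

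The main obstacle I expect is checking that the resulting equivalence is natural in~$m$, assembling into a level-wise equivalence of the two towers rather than merely a pointwise equivalence of spectra. Since every construction (smashing, $\map$, $G$-fixed points, and both cofiber sequences) is functorial in $m$ and the filtered Warwick argument is itself functorial in the choice of cofiber sequence, this naturality follows essentially automatically, though some care is required to track the degree shift and to handle positive and negative values of~$m$ uniformly; here the Spanier--Whitehead duality $D(\tF{m}) = \tF{-m}$, valid for all integers~$m$, ensures that the construction applies in both regimes without distinction.
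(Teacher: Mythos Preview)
Your overall strategy is sound and close in spirit to the paper's: set up the two cofiber sequences with $X^{tG}$ in the middle, identify the fibers compatibly, and deduce the equivalence of cofibers.  The paper instead identifies the \emph{cofibers} directly, but either organization works once the key equivalence is established.

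The gap is in the step you describe only loosely: establishing
\[
[\tF{-m}\wedge\map(EG_+,X)]^G \ \simeq\ \Sigma\map(\tEG/\tF{m}, EG_+\wedge X)^G
\]
(or equivalently, the cofiber version that the paper proves).  Your proposal to ``rerun diagram~\eqref{equ_abs} with $EG_+\to S^0\to\tEG$ replaced by $\tF{m}\to\tEG\to\tEG/\tF{m}$'' does not parse: in~\eqref{equ_abs} the map $c\:EG_+\to S^0$ plays two distinct roles (providing the horizontal cofiber \emph{and} supplying the free $G$-space that makes the vertical maps equivalences), and there is no direct substitution that yields the filtered statement.  More seriously, your stated reason that the vertical equivalences persist --- ``they rely only on the non-equivariant contractibility of $EG_+\wedge\tEG$'' --- is not what drives either~\eqref{equ_abs} or its filtered refinement.

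What the paper actually does is build a new diagram~\eqref{equ_notsougly} whose right-hand column involves the \emph{finite} skeleton $EG^{(m-1)}_+$.  Two specific facts make the vertical maps there $G$-equivalences: (i) the diagonal $\delta_m\:EG^{(m-1)}_+\to EG^{(m-1)}_+\wedge EG_+$ is a $G$-equivalence because $EG^{(m-1)}$ is $G$-free and the map is a non-equivariant equivalence; and (ii) the natural map $\nu\:D(EG^{(m-1)}_+)\wedge Z\to\map(EG^{(m-1)}_+,Z)$ is a $G$-equivalence because $EG^{(m-1)}_+$ is a \emph{finite} $G$-CW complex, hence equivariantly dualizable.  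Ingredient~(ii) is exactly what converts the $\tF{-m}=D(\tF{m})$ smash factor on one side into a mapping spectrum on the other.  You allude to Spanier--Whitehead duality $D(\tF{m})=\tF{-m}$ at the end, but only as a bookkeeping device to treat $m$ of both signs uniformly; you never invoke the genuine content, namely the equivalence $D(A)\wedge Z\simeq\map(A,Z)$ for dualizable $A$, which is the heart of the argument.  Without it, neither your fiber equivalence nor the paper's cofiber equivalence goes through.  Note also that $\tEG/\tF{m}$ is a free but \emph{infinite} $G$-CW spectrum, so one cannot dualize it directly; the argument must pass through the finite piece.
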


\begin{proof}
We give the proof for the more interesting case $m\ge0$, leaving the
case $m<0$ to the reader.
Let $i_m \: EG^{(m-1)}_+ \to EG_+$ be the inclusion, let $c_m \: EG^{(m-1)}_+
\to S^0$ be the restricted collapse map, and let $\delta_m \: EG^{(m-1)}_+
\to EG^{(m-1)}_+ \wedge EG_+$ be the diagonal.
We have a commutative diagram of $G$-spectra
\begin{equation} \label{equ_notsougly}
\xymatrix{
\map(EG_+, EG_+ \wedge X) \ar[rr]^-{\map(i_m,1\wedge1)}
&& \map(EG^{(m-1)}_+, EG_+ \wedge X) \\
\map(EG_+, EG_+ \wedge X) \ar[u]^{=}
	\ar[rr]^-{\map(c_m\wedge1,1\wedge1)}
	\ar[d]_{\map(1,c\wedge1)}^\simeq
&& \map(EG^{(m-1)}_+ \wedge EG_+, EG_+ \wedge X)
	\ar[u]_{\map(\delta_m,1\wedge1)}^\simeq
	\ar[d]^{\map(1\wedge1,c\wedge1)}_\simeq \\
\map(EG_+, X) \ar[rr]^-{\map(c_m,1)}
&& \map(EG^{(m-1)}_+, \map(EG_+, X)) \\
\map(EG_+, X) \ar[rr]^-{Dc_m\wedge1} \ar[u]^{=}
&& D(EG^{(m-1)}_+) \wedge \map(EG_+, X) \ar[u]_{\nu}^\simeq
	\,.
}
\end{equation}
On the right hand side, the middle vertical map makes use of the
identification $\map(EG^{(m-1)}_+ \wedge EG_+, X) \cong \map(EG^{(m-1)}_+,
\map(EG_+, X))$, while the lower vertical map $\nu$ is an equivalence
because $EG^{(m-1)}_+$ is dualizable.

The left hand side of~\eqref{equ_notsougly} matches the right hand side
of~\eqref{equ_abs}.  Combining these two diagrams, and taking horizontal
(homotopy) cofibers, we get a chain of $G$-equivalences connecting the
cofiber of
$$
\map(c_m,1\wedge1) \: EG_+ \wedge X \to \map(EG^{(m-1)}_+, EG_+ \wedge X)
$$
to the cofiber of
$$
(Dc_m \circ c)\wedge1 \: EG_+ \wedge \map(EG_+, X) \to D(EG^{(m-1)}_+)
	\wedge \map(EG_+, X) \,.
$$
The cofiber in the upper row is $G$-equivalent to $\Sigma \map(\tF{m},
EG_+ \wedge X)$, since $\tF{m}$ is the mapping cone of $c_m$.
The cofiber in the lower row is $G$-equivalent to $\tEG/\tF{-m}
\wedge \map(EG_+, X)$, because of the following commutative diagram with
horizontal and vertical cofiber sequences:
$$
\xymatrix{
&& D(\tF{m}) \ar[r]^{=} \ar[d] & \tF{-m} \ar[d] \\
EG_+ \ar[rr]^-c \ar[d]_{=} && S \ar[r] \ar[d]^{Dc_m} & \tEG \ar[d] \\
EG_+ \ar[rr]^-{Dc_m \circ c} && D(EG^{(m-1)}_+) \ar[r] & \tEG/\tF{-m} \\
}
$$
The lemma follows by passage to $G$-fixed points.  It is clear that
these equivalences are compatible for varying $m\ge0$.
\end{proof}

\begin{cor}
The continuous (co-)homology of $X^{tG}$ may be computed from the tower
$$
X^{tG} \to \dots \to
\Sigma \map(\tF{m+1}, EG_+ \wedge X)^G \to
\Sigma \map(\tF{m}, EG_+ \wedge X)^G \to \dots
$$
as
$$
H_c^*(X^{tG}) \cong \colim_{m\to\infty}
	\Sigma H^*(\map(\tF{m}, EG_+ \wedge X)^G)
$$
and
$$
H^c_*(X^{tG}) \cong \lim_{m\to\infty}
	\Sigma H_*(\map(\tF{m}, EG_+ \wedge X)^G) \,.
$$
\end{cor}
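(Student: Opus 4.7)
The plan is to deduce the corollary directly from Lemma~\ref{lem_tatecomparison} by applying (co-)homology term-by-term and re-indexing. First, I would recall that for every integer $m$, Lemma~\ref{lem_tatecomparison} supplies a natural zig-zag of $G$-equivalences
$$
\Sigma \map(\tF{m}, EG_+ \wedge X)^G \simeq X^{tG}[1{-}m]
$$
and that these zig-zags are compatible as $m$ varies, identifying the two towers appearing in the corollary statement. Since mod~$p$ (co-)homology sends $G$-equivalences of naive $G$-spectra, in particular the underlying non-equivariant equivalences, to isomorphisms, we obtain isomorphisms of the corresponding towers of $\A$-modules (in cohomology) and of $\A_*$-comodules (in homology).

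Next, I would pass to the colimit (resp. limit) of these isomorphic towers. Setting $n = 1-m$, the tower indexed by $m \to \infty$ is identified with the Tate tower indexed by $n \to -\infty$, so
$$
\colim_{m\to\infty} \Sigma H^*(\map(\tF{m}, EG_+ \wedge X)^G)
\cong \colim_{n\to-\infty} H^*(X^{tG}[n]) = H_c^*(X^{tG})
$$
by the definition of continuous cohomology given just before Proposition~\ref{prop_tate2}, and similarly
$$
\lim_{m\to\infty} \Sigma H_*(\map(\tF{m}, EG_+ \wedge X)^G)
\cong \lim_{n\to-\infty} H_*(X^{tG}[n]) = H^c_*(X^{tG})
$$
for continuous homology.

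There is essentially no obstacle here: the content of the corollary is precisely that the two towers have the same continuous (co-)homology, and this is already guaranteed by the compatible chain of equivalences produced in Lemma~\ref{lem_tatecomparison}. The only thing to check is that the suspension and the shift $n \mapsto 1-m$ have been handled consistently, which they have, since the suspension is applied uniformly on the left-hand tower and the equivalences of Lemma~\ref{lem_tatecomparison} already absorb this suspension. Thus the corollary follows formally.
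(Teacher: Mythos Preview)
Your proposal is correct and matches the paper's approach: the corollary is stated without proof in the paper, as it follows immediately from Lemma~\ref{lem_tatecomparison} by applying (co-)homology termwise and reindexing $n=1-m$. One small wording issue: the equivalences in Lemma~\ref{lem_tatecomparison} are already equivalences of non-equivariant spectra (the $G$-fixed points have been taken), so there is no need to invoke ``$G$-equivalences of naive $G$-spectra''; ordinary weak equivalences suffice.
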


\subsection{The (co-)homological Tate spectral sequences}
Let $G$ be a finite group and $X$ a $G$-spectrum.  The cofiber sequence
$\tF{s}/\tF{s-1} \to \tEG/\tF{s-1} \to \tEG/\tF{s}$ induces a cofiber
sequence
$$
[\tF{s}/\tF{s-1} \wedge \map(EG_+, X)]^G \to
	X^{tG}[s] \overset{i}\longto X^{tG}[s{+}1]
$$
for every integer~$s$.  The left hand term is equivalent to
$$
[\tF{s}/\tF{s-1} \wedge X]^G \simeq (\tF{s}/\tF{s-1} \wedge i^* X)/G
$$
since $\tF{s}/\tF{s-1}$ is $G$-free.

Applying cohomology, we get an exact couple of $\A$-modules
\begin{equation} \label{equ_cohomtatecouple}
\xymatrix{
A^{s+1,*} \ar[r]^-i & A^{s,*} \ar[d] \\
& \hatE_1^{s,*} \ar@{-->}[ul]
}
\end{equation}
with
$$
A^{s,t} = H^{s+t}(X^{tG}[s])
\qquad\text{and}\qquad
\hatE_1^{s,t} = H^{s+t}((\tF{s}/\tF{s-1} \wedge i^* X)/G) \,.
$$
By Lemma~\ref{lem_otherlimitsvanish}, $\lim_s A^s = \Rlim_s A^s = 0$, so
this spectral sequence converges conditionally to the colimit
$H_c^*(X^{tG})$, in the first sense of \cite{B1999}*{5.10}.
Applying homology instead, we get an exact couple of algebraic
$\A_*$-comodules
\begin{equation} \label{equ_homtatecouple}
\xymatrix{
A_{s,*} \ar[r]^-i & A_{s+1,*} \ar@{-->}[dl] \\
\hatE^1_{s,*} \ar[u]
}
\end{equation}
with
$$
A_{s,t} = H_{s+t}(X^{tG}[s])
\qquad\text{and}\qquad
\hatE^1_{s,t} = H_{s+t}((\tF{s}/\tF{s-1} \wedge i^* X)/G) \,.
$$
By Lemma~\ref{lem_otherlimitsvanish}, $\colim_s A_s = 0$, so
this spectral sequence converges conditionally to the limit
$H^c_*(X^{tG})$, in the second sense of \cite{B1999}*{5.10}.

We can rewrite the $\hatE^1$-term as
$$
\hatE^1_{s,t} \cong H_s(\tF{s}/\tF{s-1}) \otimes_{\F_pG} H_t(X)
= P_{s-1} \otimes_{\F_pG} H_t(X) \,,
$$
and the $d^1$-differential is induced by the differential in
the complete resolution $(P_*, d_*)$, so
$$
\hatE^2_{s,t} \cong \tH_{s-1}(G; H_t(X)) \cong 
	\tH^{-s}(G; H_t(X)) \,.
$$
Dually, the $\hatE_1$-term is
$$
\hatE_1^{s,t} \cong \Hom(P_{s-1} \otimes_{\F_pG} H_t(X), \F_p)
\cong \Hom_{\F_pG}(P_{s-1}, H^t(X))
$$
and
$$
\hatE_2^{s,t} \cong \tH^{s-1}(G; H^t(X)) \cong \tH_{-s}(G; H^t(X)) \,.
$$

\begin{dfn}
Let $G$ be a finite group and $X$ a $G$-spectrum.  The \emph{cohomological
Tate spectral sequence} of $X$ is the conditionally convergent spectral
sequence
$$
\hatE_2^{s,t} = \tH_{-s}(G; H^t(X)) \Longrightarrow H_c^{s+t}(X)
$$
associated with the exact couple of
$\A$-modules~\eqref{equ_cohomtatecouple}.  Dually, the \emph{homological
Tate spectral sequence} of $X$ is the conditionally convergent spectral
sequence
$$
\hatE^2_{s,t} = \tH^{-s}(G; H_t(X)) \Longrightarrow H^c_{s+t}(X)
$$
associated with the exact couple of
$\A_*$-comodules~\eqref{equ_homtatecouple}.
\end{dfn}

\begin{remark}
There is a natural isomorphism $\hatE_r^{s,t} \cong \Hom(\hatE^r_{s,t},
\F_p)$ for all finite $r$, $s$ and $t$, so that the $d_r$-differential
$d_r^{s,t} \: \hatE_r^{s,t} \to \hatE_r^{s+r,t-r+1}$ is the linear dual
of the $d^r$-differential $d^r_{s+r,t-r+1} \: \hatE^r_{s+r,t-r+1} \to
\hatE^r_{s,t}$.  In this sense the cohomological Tate spectral sequence
is dual to the homological Tate spectral sequence.
\end{remark}

To get strong convergence, we need $X$ to be bounded below in the
cohomological case, and that $X$ is bounded below and of finite type
over $\F_p$ in the homological case.

\begin{prop} \label{prop_cohomological}
Let $G$ be a finite group and $X$ a $G$-spectrum.  Assume that $X$ is
bounded below.  Then $X^{tG}$ is the homotopy inverse limit of a tower
$\{X^{tG}[s]\}_s$ of bounded below spectra, and the cohomological Tate
spectral sequence
$$
\hatE_2^{s,t}(X) = \tH_{-s}(G; H^t(X)) \Longrightarrow H^{s+t}_c(X^{tG})
$$
converges strongly to the continuous cohomology of $X^{tG}$ as an
$\A$-module.
\end{prop}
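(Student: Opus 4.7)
The plan is to verify three ingredients: (a) the tower representation of $X^{tG}$ with bounded-below terms, (b) conditional convergence of the spectral sequence, and (c) vanishing of $\Rlim_r \hatE_r^{s,t}$ at every bidegree.

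For (a), the equivalence $X^{tG} \simeq \holim_{n\to-\infty} X^{tG}[n]$ is Lemma~\ref{lem_twotowers}, whose proof uses no finite-type assumption.  To see that each $X^{tG}[n]$ is bounded below, I would adapt the cellular induction from the preceding finite-type lemma, keeping only the connectivity part: the cofiber sequences $X^{tG}[n,k-1] \to X^{tG}[n,k] \to \bigvee \Sigma^k X$ show by induction on $k$ that each $X^{tG}[n,k]$ is at least $(c+n-1)$-connected when $X$ is $(c-1)$-connected, and this lower bound is preserved by $X^{tG}[n] \simeq \hocolim_k X^{tG}[n,k]$.

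For (b), conditional convergence in the sense of \cite{B1999}*{5.10} of the exact couple~\eqref{equ_cohomtatecouple} is immediate from Lemma~\ref{lem_otherlimitsvanish}: with $A^{s,t} = H^{s+t}(X^{tG}[s])$, both $\lim_s A^s$ and $\Rlim_s A^s$ vanish.

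The main work is (c).  I would invoke Boardman's \cite{B1999}*{7.1} to reduce strong convergence to the vanishing of $RE^{s,t}_\infty = \Rlim_r \hatE_r^{s,t}$ at every bidegree.  Boundedness of $X$ supplies a horizontal vanishing line: if $X$ is $(c-1)$-connected, then $H^t(X) = 0$ for $t < c$, so $\hatE_2^{s,t} = \tH_{-s}(G; H^t(X)) = 0$ for $t < c$, and this vanishing persists on every page.  Fix a bidegree $(s,t)$; the outgoing differential $d_r \: \hatE_r^{s,t} \to \hatE_r^{s+r,t-r+1}$ must vanish as soon as $r > t - c + 1$, because its target lies in the vanishing region.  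From that $r$ onward the transition $\hatE_r^{s,t} \to \hatE_{r+1}^{s,t}$ is the quotient by the image of the incoming $d_r$, hence surjective, so the tower $\{\hatE_r^{s,t}\}_r$ is eventually surjective and its $\Rlim$ vanishes.  All maps and differentials being $\A$-linear, the convergence is one of $\A$-modules.

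The step I expect will require the most care is the bookkeeping needed to fit our set-up precisely into Boardman's half-plane framework—verifying that the filtration on the colimit target $H^*_c(X^{tG})$ given by images from the tower is exactly the one whose associated graded is computed by $\hatE_\infty$.  Once that alignment is established, the surjectivity argument above disposes of the $\Rlim$-obstruction, and strong convergence follows.
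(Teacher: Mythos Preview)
Your proof is correct, but it takes a slightly longer route than the paper's.  The paper simply observes that, because $H^*(X)$ is bounded below, the cohomological Tate spectral sequence is a half-plane spectral sequence with \emph{exiting} differentials in Boardman's sense, and then cites \cite{B1999}*{6.1}, which gives strong convergence outright---no separate verification of $RE_\infty^{s,t}=0$ is required.  Your approach instead invokes the general criterion \cite{B1999}*{7.1} (conditional convergence plus vanishing of $\Rlim_r \hatE_r^{s,t}$) and then checks that vanishing by hand via the horizontal vanishing line; this is precisely the mechanism underlying Boardman's~6.1 in the exiting case, so you are effectively reproving that step.  Both arguments are sound; the paper's is shorter because it recognizes the exiting-differential pattern and delegates the eventual-surjectivity reasoning to Boardman.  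The bookkeeping you flag at the end---matching the image filtration on $H_c^*(X^{tG})$ with the $\hatE_\infty$-term---is exactly what \cite{B1999}*{6.1} handles, so once you cite that result no further alignment is needed.
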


\begin{proof}
When $H^*(X)$ is bounded below, the cohomological Tate spectral sequence
has exiting differentials in the sense of Boardman, so the spectral
sequence is automatically strongly convergent by \cite{B1999}*{6.1}.
In other words, the filtration of $H^{s+t}_c(X^{tG})$ by
the sub $\A$-modules
$$
F^s H^*_c(X^{tG}) = \im ( H^*(X^{tG}[s]) \to H^*_c(X^{tG}) )
$$
is exhaustive, complete and Hausdorff, and there are $\A$-module
isomorphisms
$$
F^s H^*_c(X^{tG}) / F^{s+1} H^*_c(X^{tG}) \cong \hatE_\infty^{s,*} \,.
$$
\end{proof}

\begin{prop} \label{prop_homological}
Let $G$ be a finite group and $X$ a $G$-spectrum.  Assume that $X$
is bounded below and of finite type over~$\F_p$.  Then $X^{tG}$ is the
homotopy inverse limit of a tower $\{X^{tG}[s]\}_s$ of bounded below
spectra of finite type over~$\F_p$, and the homological Tate spectral
sequence
$$
\hatE^2_{s,t}(X) = \tH^{-s}(G; H_t(X)) \Longrightarrow H_{s+t}^c(X^{tG})
$$
converges strongly to the continuous homology of $X^{tG}$ as a
complete $\A_*$-comodule.
\end{prop}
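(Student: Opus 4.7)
The plan is to parallel the proof of Proposition \ref{prop_cohomological}, with the crucial twist that the homological Tate spectral sequence has entering rather than exiting differentials, so Boardman's automatic strong-convergence criterion for exiting half-plane spectral sequences does not apply directly. First, I would invoke the preceding finite-type lemma to conclude that each $X^{tG}[s]$ is bounded below and of finite type over~$\F_p$, and Lemma \ref{lem_twotowers} to identify $X^{tG} \simeq \holim_{s\to-\infty} X^{tG}[s]$. Together with Corollary \ref{cor_HcY}, this gives the tower statement and makes $H^c_*(X^{tG}) = \lim_s H_*(X^{tG}[s])$ a complete $\A_*$-comodule.

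For conditional convergence in the second sense of \cite{B1999}*{5.10} I use that $\colim_s H_*(X^{tG}[s]) = 0$, which is part of Lemma \ref{lem_otherlimitsvanish}. The heart of the argument is then strong convergence, for which I must verify the vanishing of the derived-limit term $\Rlim_r \hatE^r_{s,t}$ in each bidegree, in the sense of Boardman \cite{B1999}*{7.1}. The key observation is that for each integer $s$ the quotient $\tF{s}/\tF{s-1}$ is a finite wedge of $G$-free sphere spectra $G_+ \wedge S^s$, so $\hatE^1_{s,t} \cong P_{s-1} \otimes_{\F_pG} H_t(X)$ is finite-dimensional in each bidegree under the finite-type hypothesis on $X$. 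Every later $\hatE^r_{s,t}$ is then a subquotient of a finite-dimensional vector space, so the decreasing sequence $\{\hatE^r_{s,t}\}_r$ stabilizes after finitely many steps; hence the relevant $\Rlim$ vanishes and strong convergence follows.

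The main potential obstacle is keeping track of the complete $\A_*$-comodule structure on the target. Once strong convergence is in hand, the filtration
$$
F_s H^c_*(X^{tG}) = \ker\bigl(H^c_*(X^{tG}) \to H_*(X^{tG}[s])\bigr)
$$
is exhaustive, Hausdorff and complete, with associated graded isomorphic to $\hatE^\infty_{*,*}$. That the filtration is by complete sub $\A_*$-comodules follows because all maps in the Tate tower are maps of spectra, inducing $\A_*$-comodule maps in homology, and the continuity of the induced coaction on $H^c_*(X^{tG})$ is precisely the content of the machinery of \textsection \ref{subsec_limitsofcomodules}; so passing from vector-space strong convergence to strong convergence in the category of complete $\A_*$-comodules is essentially formal.
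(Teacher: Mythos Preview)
Your proposal is correct and follows essentially the same route as the paper: identify the spectral sequence as a half-plane spectral sequence with entering differentials (from the bounded-below hypothesis), observe that each $\hatE^r_{s,t}$ is a finite $\F_p$-vector space (from the finite-type hypothesis) so that $\Rlim_r \hatE^r_{s,t} = 0$, and then invoke Boardman's strong convergence criterion. The paper's proof is terser---it cites \cite{B1999}*{7.4} rather than 7.1 and omits the conditional-convergence step you spell out, since that was already recorded before the proposition---but the substance is identical.
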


\begin{proof}
When $H_*(X)$ is bounded below, the homological Tate spectral sequence
has entering differentials in the sense of Boardman.  The derived limit
$R\hatE^\infty_{*,*} = \Rlim_r \hatE^r_{*,*}$ vanishes since $H_*(X)$,
and thus $\hatE^2_{*,*}$, is finite in each (bi-)degree.  Hence the
spectral sequence is strongly convergent by \cite{B1999}*{7.4}.
In other words, the filtration of $H^c_{s+t}(X^{tG})$ by the
complete sub $\A_*$-comodules
$$
F_s H^c_*(X^{tG}) = \ker ( H^c_*(X^{tG}) \to H^c_*(X^{tG}[s]) )
$$
is exhaustive, complete and Hausdorff, and there are
algebraic $\A_*$-comodule isomorphisms
$$
F_{s+1} H^c_*(X^{tG}) / F_s H^c_*(X^{tG}) \cong \hatE^\infty_{s,*} \,.
$$
\end{proof}

\subsubsection{Homotopy vs.~Homology}
We used the lower tower in~\eqref{equ_tatetowers}:
\begin{equation} \label{equ_inversetower}
\xymatrix{
X^{tG} \ar[r] & \dots \ar[r] & X^{tG}[n] \ar[r] & X^{tG}[n{+}1] \ar[r]
	& \cdots \ar[r] & {*}
}
\end{equation}
to define our homological Tate spectral sequence, by applying homology
with $\F_p$-coefficients.  When studying the \emph{homotopy groups}
of the Tate construction, it has been customary to apply $\pi_*(-)$
to the upper tower in~\eqref{equ_tatetowers}:
\begin{equation} \label{equ_directtower}
\xymatrix{
{*} \ar[r] & \dots \ar[r] & X^{tG}[-\infty, n{-}1] \ar[r] &
X^{tG}[-\infty, n] \ar[r] & \dots \ar[r] & X^{tG}
}
\end{equation}
Applying a homological functor to these two towers of spectra gives
two different exact couples with isomorphic spectral sequences.  If we are
working with homotopy, we get two spectral sequences converging to the
same groups.  Using~\eqref{equ_directtower} yields a spectral sequence
converging to the colimit
$$
\colim_n \pi_*(X^{tG}[-\infty,n]) \cong \pi_*(X^{tG}) \,,
$$
while using~\eqref{equ_inversetower} yields an isomorphic spectral
sequence converging to the inverse limit
\begin{equation} \label{equ_nc9}
\lim_n \pi_*(X^{tG}[n]) \cong \pi_*(X^{tG}) \,.
\end{equation}
The latter isomorphism assumes that $X$ is bounded below and of suitably
finite type, so that the right derived limit $\Rlim_n \pi_*(X^{tG}[n])
= 0$.  For example, it suffices if $\pi_*(X)$ is of finite type over $\Z$
or $\widehat\Z_p$.

When working with homology with $\F_p$-coefficients, instead of homotopy
groups, the failure of the isomorphism we made use of in~\eqref{equ_nc9}
makes the situation more interesting.  Applying $H_*(-)$ to the tower
\eqref{equ_directtower} will produce a sequence of homology groups whose
inverse limit is not trivial in general.  This means that the associated
spectral sequence will not be conditionally convergent to the direct limit
$$
\colim_nH_*(X^{tG}[-\infty,n]) \cong H_*(X^{tG})\,.
$$
In fact, we have seen that the (isomorphic) homological Tate spectral
sequence, arising from~\eqref{equ_inversetower}, converges strongly to
$$
\lim_nH_*(X^{tG}[n]) = H_*^c(X^{tG})\,,
$$
which is only rarely isomorphic to $H_*(X^{tG})$, since inverse limits
and homology do not commute in general.

We end this discussion by noticing that the continuous homology groups
of the Tate construction on $X$ can be thought of as the homotopy
groups of the Tate construction on $H \wedge X$, where $H = H\F_p$ is the
Eilenberg--Mac\,Lane spectrum of~$\F_p$.  In other words, continuous
homology of $X^{tG}$ is a special case of homotopy.

\begin{prop} \label{prop_homotopyoftate}
For any bounded below $G$-spectrum $X$ of finite type over~$\F_p$
there is a natural isomorphism
$$
\pi_*(H\wedge X)^{tG} \cong H_*^c(X^{tG}) \,,
$$
where $H$ has the trivial $G$-action.
\end{prop}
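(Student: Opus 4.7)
The plan is to produce a levelwise equivalence between the towers $\{(H\wedge X)^{tG}[n]\}_n$ and $\{H\wedge X^{tG}[n]\}_n$, cofinally as $n\to-\infty$, and then pass to the homotopy inverse limit. First I would apply Warwick duality (Lemma~\ref{lem_tatecomparison}) to both $X$ and $H\wedge X$: for each $m\ge 0$ we obtain natural chains of equivalences
\[
X^{tG}[1{-}m]\simeq\Sigma\map(\tF{m},EG_+\wedge X)^G,\qquad
(H\wedge X)^{tG}[1{-}m]\simeq\Sigma\map(\tF{m},EG_+\wedge H\wedge X)^G,
\]
compatibly with varying $m$.

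Since $\tF{m}$ is a finite $G$-CW spectrum for $m\ge 0$, it is strongly dualizable, so $\map(\tF{m},Y)\simeq D\tF{m}\wedge Y$ as $G$-spectra for every $Y$. The $G$-free factor $EG_+$ makes both $D\tF{m}\wedge EG_+\wedge X$ and $D\tF{m}\wedge EG_+\wedge H\wedge X$ into $G$-free $G$-spectra, so the Adams transfer~\eqref{equ_adamstransfer} (with $\ad G=0$ since $G$ is finite) identifies their $G$-fixed points with their $G$-orbits. Because $H$ carries the trivial $G$-action, forming orbits commutes with smashing by $H$, and we obtain
\[
(H\wedge X)^{tG}[1{-}m]\simeq\Sigma(D\tF{m}\wedge EG_+\wedge H\wedge X)/G\simeq H\wedge\Sigma(D\tF{m}\wedge EG_+\wedge X)/G\simeq H\wedge X^{tG}[1{-}m].
\]
Taking homotopy groups yields a natural isomorphism $\pi_*((H\wedge X)^{tG}[n])\cong H_*(X^{tG}[n])$ for all $n\le 1$, compatibly with the tower maps.

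To conclude, note that $H\wedge X$ is a bounded below $G$-spectrum of finite type over $\F_p$, so Proposition~\ref{prop_homological} applied to $H\wedge X$ identifies $(H\wedge X)^{tG}$ with $\holim_n(H\wedge X)^{tG}[n]$. Applied to $X$, the same proposition shows that each $X^{tG}[n]$ is of finite type over $\F_p$, so $\pi_*((H\wedge X)^{tG}[n])\cong H_*(X^{tG}[n])$ is finite in each degree; consequently $\Rlim^1$ vanishes and the Milnor sequence collapses to
\[
\pi_*((H\wedge X)^{tG})\cong\lim_{n\to-\infty}\pi_*((H\wedge X)^{tG}[n])\cong\lim_{n\to-\infty}H_*(X^{tG}[n])=H_*^c(X^{tG}).
\]

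The main obstacle is the levelwise identification $(H\wedge X)^{tG}[n]\simeq H\wedge X^{tG}[n]$. Attempted directly, neither the function spectrum $\map(EG_+,-)$ on the infinite complex $EG_+$ nor the $G$-fixed point functor obviously commutes with smashing by $H$. Warwick duality cures both defects simultaneously by rewriting the truncated Tate construction in terms of a finite dualizable spectrum $D\tF{m}$ smashed with the $G$-free space $EG_+$, after which the identification reduces to the tautology that orbits commute with smashing against the $G$-trivial spectrum $H$.
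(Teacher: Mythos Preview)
Your proof is correct and follows essentially the same route as the paper's: both arguments invoke Lemma~\ref{lem_tatecomparison} to pass to the Warwick-dual form $\Sigma\map(\tF{m},EG_+\wedge-)^G$, use dualizability of $\tF{m}$ to extract the trivial smash factor $H$, and then appeal to the finite-type hypothesis to kill the $\Rlim$ in the Milnor sequence. The paper's treatment of the middle equivalence is terse (``follows from the fact that $\tF{m}$ is $G$-equivariantly dualizable''), whereas you spell out the passage through orbits via the Adams transfer; this is the same mechanism made explicit. One small correction: the identification $(H\wedge X)^{tG}\simeq\holim_n(H\wedge X)^{tG}[n]$ is Lemma~\ref{lem_twotowers}, not Proposition~\ref{prop_homological}.
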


\begin{proof}
For all integers~$m$ we have
\begin{align*}
(H\wedge X)^{tG}[1{-}m] &\simeq \Sigma\map(\tF{m}, EG_+\wedge H\wedge X)^G \\
	&\simeq H \wedge \Sigma\map(\tF{m}, EG_+\wedge X)^G
	\simeq H \wedge X^{tG}[1{-}m] \,.
\end{align*}
The first and last equivalences follow from
Lemma~\ref{lem_tatecomparison}, while the middle equivalence follows from
the fact that $\tF{m}$ is $G$-equivariantly dualizable.  Thus, we have
that $\pi_*(H\wedge X)^{tG}[n]\cong H_*(X^{tG}[n])$ for all integers~$n$.
For a general $G$-spectrum $X$, we then have the following surjective
maps:
\begin{align} \label{equ_sfinx}
\pi_* (H\wedge X)^{tG} &\to \lim_{n\to-\infty} \pi_* (H\wedge X)^{tG}[n]\\
&\cong \lim_{n\to-\infty} H_*(X^{tG}[n]) = H_*^c(X^{tG}) \,. \notag
\end{align}
Since $X$ was assumed to be bounded below and of finite type over~$\F_p$,
the groups in the first inverse limit system are all of finite type
over $\F_p$, so their $\Rlim$ vanishes and the map in~\eqref{equ_sfinx} is
an isomorphism.
\end{proof}

The previous proposition and discussion tells us that the continuous
homology of $X^{tG}$ can be considered both as the direct limit
$\colim_n\pi_*(H\wedge X)^{tG}[-\infty, n]$ or as the inverse limit
$\lim_n\pi_*(H\wedge X)^{tG}[n,\infty]$.  In both cases, the filtration
of the two groups given by their defining towers are the same.

\subsection{Multiplicative structure} \label{sec_multstructure}
We now discuss how the treatment in \cite{HM2003}*{\textsection 4.3}
of multiplicative structure in the homotopical Tate spectral sequence
carries over to the homological Tate spectral sequence.

Let $X$ be a bounded below $G$-equivariant ring spectrum of finite
type over~$\F_p$.  We assume that the unit map $\eta \: S \to X$ and
the multiplication map $\mu\: X \wedge X \to X$ are equivariant with
respect to the trivial $G$-action on $S$ and the diagonal $G$-action on
$X \wedge X$.  By \cite{GM1995}*{3.5}, the homotopy cartesian square
\begin{equation} \label{equ_ringsquare}
\xymatrix{
X^G \ar[r]^-R \ar[d]_{\Gamma} & [\tEG \wedge X]^G \ar[d]^{\hat\Gamma} \\
X^{hG} \ar[r]^-{R^h} & X^{tG}
}
\end{equation}
in~\eqref{equ_fund1} is a diagram of ring spectra and ring spectrum maps.
Up to homotopy there is a unique $G$-equivalence $f \: \tEG \wedge
\tEG \overset{\simeq}\to \tEG$, compatible with the inclusion $S^0 \to
\tEG$ and the homeomorphism $S^0 \wedge S^0 \cong S^0$.  Using $f$,
the multiplication map on $X^{tG}$ is given by the composition
$$
\xymatrix{
[\tEG \wedge \map(EG_+, X)]^G \wedge [\tEG\wedge \map(EG_+, X)]^G \ar[d] \\
[\tEG \wedge \tEG \wedge \map(EG_+ \wedge EG_+, X \wedge X)]^G
	\ar[d]^{f \wedge \map(\Delta, \mu)} \\
[\tEG \wedge \map(EG_+, X)]^G \,.
}
$$
The other multiplication maps arise by replacing $\tEG$, $EG_+$ or both
by $S^0$.  The unit map to $X^G$ is adjoint to the $G$-map $S \to X$, since
$S$ has the trivial $G$-action.  The other unit maps arise by composition
with the maps in~\eqref{equ_ringsquare}.

The non-equivariant ring spectrum structure on the Eilenberg--Mac\,Lane
spectrum $H$ makes $H \wedge X$ a naively $G$-equivariant ring spectrum,
so $(H \wedge X)^{tG}$ is a ring spectrum.  The induced graded
ring structure on $\pi_* (H \wedge X)^{tG}$ then gives a graded ring
structure on the continuous homology $H^c_*(X^{tG})$, by the isomorphism
of Proposition~\ref{prop_homotopyoftate}.

\begin{prop} \label{prop_homological2}
Let $G$ be a finite group and $X$ a $G$-equivariant ring spectrum.
Assume that $X$ is bounded below and of finite type over~$\F_p$.
Then the homological Tate spectral sequence
$$
\hatE^2_{s,t} = \tH^{-s}(G; H_t(X))
\Longrightarrow H^c_{s+t}(X^{tG})
$$
is a strongly convergent $\A_*$-comodule algebra spectral sequence,
whose product at the $\hatE^2$-term is given by the cup product in
Tate cohomology and the Pontryagin product on $H_*(X)$.
\end{prop}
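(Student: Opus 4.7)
The plan is as follows. Strong convergence has already been established in Proposition~\ref{prop_homological}, so the content of the proposition is: (i)~to promote the $G$-equivariant ring structure on $X$ to a multiplicative structure on the Tate tower; (ii)~to obtain, upon applying $H_*(-;\F_p)$, a multiplicative structure on the spectral sequence; (iii)~to identify the $\hatE^2$-product with the expected cup product times Pontryagin product; and (iv)~to verify $\A_*$-colinearity of the product and all differentials.

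First I would construct compatible pairings on the Greenlees filtration. Up to $G$-homotopy there is a unique multiplication $f \: \tEG \wedge \tEG \to \tEG$ compatible with the unit $S^0 \to \tEG$, and by equivariant cellular approximation it can be chosen so as to send $\tF{m} \wedge \tF{n}$ into $\tF{m+n}$ for $m,n\ge 0$; dualizing and splicing as in the construction of the Greenlees filtration extends this to all integers $m$ and $n$, with $\tF{0} = S$ as the strict unit. Combining with the diagonal $\Delta \: EG_+ \to EG_+ \wedge EG_+$, the internal pairing $\map(EG_+,X) \wedge \map(EG_+,X) \to \map(EG_+, X \wedge X)$, and the equivariant multiplication $\mu \: X \wedge X \to X$, and then passing to $G$-fixed points, one obtains a system of pairings
$$
X^{tG}[m] \wedge X^{tG}[n] \longto X^{tG}[m{+}n]
$$
respecting the Tate tower, exactly in the manner of \cite{HM2003}*{\textsection 4.3}. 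Smashing throughout with the commutative ring spectrum $H = H\F_p$ (with trivial $G$-action) gives pairings on $(H \wedge X)^{tG}[*]$, and passage to homotopy inverse limits yields the ring spectrum structure on $(H \wedge X)^{tG}$ whose homotopy is the continuous homology, as explained in \textsection\ref{sec_multstructure} and Proposition~\ref{prop_homotopyoftate}.

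Next, applying $\pi_*(-)$ (equivalently, $H_*(-;\F_p)$) to these filtered pairings of cofiber sequences gives a pairing of the exact couple~\eqref{equ_homtatecouple}, hence a pairing of the associated spectral sequence in which each $d^r$ is a derivation. At the $\hatE^1$-level the pairing is induced by the pairing of subquotients $\tF{s}/\tF{s-1} \wedge \tF{s'}/\tF{s'-1} \to \tF{s+s'}/\tF{s+s'-1}$ together with $\mu_* \: H_*(X) \otimes H_*(X) \to H_*(X)$; under the identification $\hatE^1_{s,*} \cong P_{s-1} \otimes_{\F_pG} H_*(X)$, the first factor is precisely a $G$-equivariant chain-level diagonal approximation on the complete resolution $(P_*, d_*)$, compatible with the differential $d_*$. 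By \cite{CE}*{XII.4}, any such diagonal approximation induces the cup product on $\tH^*(G;-)$ after passage to (co)homology, so at the $\hatE^2$-term the pairing is the stated cup product in Tate cohomology with coefficients in $H_*(X)$ multiplied by the Pontryagin product.

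For the $\A_*$-comodule algebra structure, one observes that $H_*(-;\F_p)$ takes values in $\A_*$-comodules and takes smash products to $\A_*$-comodule tensor products, so every map in the filtered pairing above induces an $\A_*$-comodule map; coassociativity of $\A_*$ (i.e.\ the Cartan formula) then says that the multiplication on each $\hatE^r$ is $\A_*$-colinear, and the $d^r$-differentials are $\A_*$-colinear because they arise from maps of spectra. The main technical point is the cellular compatibility of $f$ with the skeleton filtration of $\tEG$ and its dualization to negative indices; this is standard equivariant obstruction theory, and we carry it out exactly as in \cite{HM2003}*{\textsection 4.3}, to which we refer the reader for the detailed diagrams.
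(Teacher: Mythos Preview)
Your proposal is correct and follows essentially the same route as the paper, though with considerably more detail. The paper's own proof is very terse: it immediately identifies the homological Tate spectral sequence for $X$ with the \emph{homotopical} Tate spectral sequence for $H \wedge X$ (via Proposition~\ref{prop_homotopyoftate}), and then simply cites \cite{HM2003}*{4.3.5} for the algebra spectral sequence structure, rather than sketching the construction of the filtered pairings and the identification of the $\hatE^2$-product as you do. Your explicit treatment of the $\A_*$-colinearity and of the cup-product identification via a chain-level diagonal on the complete resolution is not spelled out in the paper at all; it is implicit in the statement but absorbed into the citation.
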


\begin{proof}
Using the Greenlees filtration, we may filter $(H\wedge X)^{tG}$
by the tower~\eqref{equ_directtower}.  This produces a homotopical Tate
spectral sequence with $\hatE^2$-term
$$
  \hatE^2_{s,t} = \tH^{-s}(G; \pi_{t}(H\wedge X)) = \tH^{-s}(G; H_t(X)) \,,
$$
converging strongly to the homotopy $\pi_{s+t}(H\wedge X)^{tG} \cong
H^c_{s+t}(X^{tG})$.  By the proof of Proposition~\ref{prop_homotopyoftate}
it is additively isomorphic to the homological Tate spectral sequence
of Proposition~\ref{prop_homological}.
By \cite{HM2003}*{4.3.5}, it is also an algebra
spectral sequence, with differentials being derivations with respect to
the product.  The $\hatE^\infty$-term is the associated graded of
the multiplicative filtration of $\pi_{s+t}(H\wedge X)^{tG}$ given by
the images
$$
\im(\pi_* (H\wedge X)^{tG}[-\infty,s] \to \pi_* (H\wedge X)^{tG})
\,.
$$
\end{proof}

\section{The topological Singer construction} \label{sec_singer}

\subsection{Realizing the Singer construction as continuous cohomology}
As observed by Miller, and presented by Bruner, May, McClure and
Steinberger in \cite{BMMS}*{\textsection II.5}, there is a particular
inverse system of spectra whose continuous cohomology realizes the Singer
construction in the version related to the symmetric group $\Sigma_p$.
We go through the adjustments needed to realize the version of the
Singer construction related to the subgroup $C_p$ generated by the cyclic
permutation $(1~2~\cdots~p)$.

Let $B$ be a non-equivariant spectrum that is bounded below and of finite
type over~$\F_p$.  For each subgroup $G \subseteq \Sigma_p$
there is an extended power construction \cite{BMMS}*{\textsection I.5}
$$
D_G(B) = EG \ltimes_G B^{(p)} \,,
$$
well defined in the stable homotopy category.  Here $B^{(p)}$ denotes
the external $p$-th smash power of $B$, and $G$ permutes the $p$ copies
of $B$.  It follows from \cite{BMMS}*{I.2.4} that $D_G(B)$ is bounded
below and of finite type over~$\F_p$.  More precisely, we have the
following calculation.

\begin{lemma}[\cite{BMMS}*{I.2.3}] \label{lem_grouphomology}
There is a natural isomorphism
$$
H_*(D_G(B))\cong H_*(G; H_*(B)^{\otimes p}) \,,
$$
where $G$ permutes the $p$ copies of $H_*(B)$.
\end{lemma}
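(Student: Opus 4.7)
The plan is to exhibit the natural isomorphism via the spectral sequence associated to the skeleton filtration of $EG$. First I would rewrite $D_G(B) = EG\ltimes_G B^{(p)}$ as the Borel construction $(EG_+ \wedge B^{(p)})/G$, and choose a free $G$-CW structure on $EG$. The resulting skeleta filter $D_G(B)$, with subquotients
\[
(EG^{(n)}/EG^{(n-1)})_+\wedge_G B^{(p)},
\]
each a wedge of $n$-fold suspensions of $B^{(p)}$ indexed by the free $G$-cells of $EG$ in dimension~$n$.

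Applying $H_*(-)$ to the exact couple of this filtration gives a spectral sequence with
\[
E^1_{s,t} \cong C^{\mathrm{cell}}_s(EG;\F_p)\otimes_{\F_pG} H_t(B^{(p)}),
\]
where $C^{\mathrm{cell}}_*(EG;\F_p)$ is the cellular chain complex of $EG$, regarded as a free $\F_pG$-resolution of the trivial module $\F_p$. The $d^1$-differential is induced by the cellular boundary. Invoking the K\"unneth formula, which applies because $H_*(B)$ is bounded below and of finite type over~$\F_p$, we have a $G$-equivariant isomorphism $H_*(B^{(p)})\cong H_*(B)^{\otimes p}$ with $G$ acting by permutation of tensor factors. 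Hence
\[
E^2_{s,t} \cong H_s\bigl(G;\, H_*(B)^{\otimes p}\bigr)_{t}
\]
and the spectral sequence converges to $H_{s+t}(D_G(B))$.

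The main obstacle is to produce an actual isomorphism of (bi-)graded $\F_p$-vector spaces, which requires the spectral sequence to collapse at $E^2$ with no additive extensions. This is precisely the content of \cite{BMMS}*{I.2.3}, and the standard way to see it is to reduce to a case where the answer can be read off at the chain level. Since $H_*(B)$ is $\F_p$-free of finite type, one filters $B$ by a cell structure whose cells are $\F_p$-Moore cells and proceeds by induction on the number of cells, using naturality and the five-lemma; alternatively, since $H\wedge B$ splits as a wedge of suspensions of the Eilenberg--Mac\,Lane spectrum $H=H\F_p$, one observes that the operation $D_G(-)$ smashed with $H$ is compatible with such splittings, and for a single factor $\Sigma^n H$ the computation of $H_*(D_G(\Sigma^n H))$ is an explicit bar-complex calculation giving $H_*(G;\F_p^{\otimes p}\{n\})$ with no ambiguity. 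Naturality of the resulting identification in $B$, together with the compatibility with the filtration above, then yields the desired natural isomorphism of graded $\F_p$-vector spaces.
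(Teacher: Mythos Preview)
The paper does not actually prove this lemma; it is stated with a citation to \cite{BMMS}*{I.2.3} and used as a black box. Your spectral-sequence approach via the skeletal filtration of $EG$ is the standard route and is essentially what underlies the cited result, so there is nothing to compare against in the paper itself.

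One point in your collapse argument deserves correction. Your approach~(b) does not work as written: the extended power $D_G(-)$ is not an additive functor (it involves a $p$-th smash power), so a wedge decomposition of $H\wedge B$ does not induce one of $H\wedge D_G(B)$, and there is no direct reduction to the case of a single suspension of $H$ this way. The clean argument bypasses the spectral sequence and works at the chain level: over the field $\F_p$ the chain complex $C_*(B;\F_p)$ admits a splitting $s\colon H_*(B)\to C_*(B;\F_p)$ that is a quasi-isomorphism. Its $p$-th tensor power $s^{\otimes p}\colon H_*(B)^{\otimes p}\to C_*(B;\F_p)^{\otimes p}$ is a $G$-equivariant quasi-isomorphism (equivariance holds because all tensor factors are the same map). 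Since $C^{\mathrm{cell}}_*(EG;\F_p)$ is a bounded-below complex of free $\F_pG$-modules, applying $C^{\mathrm{cell}}_*(EG)\otimes_{\F_pG}(-)$ preserves quasi-isomorphisms, giving
\[
C^{\mathrm{cell}}_*(EG)\otimes_{\F_pG}C_*(B;\F_p)^{\otimes p}\ \simeq\ C^{\mathrm{cell}}_*(EG)\otimes_{\F_pG}H_*(B)^{\otimes p}\,,
\]
whose homology is $H_*(G;H_*(B)^{\otimes p})$ by definition. This yields the natural isomorphism directly and shows in particular that your spectral sequence collapses at $E^2$ with no extensions. Your approach~(a), induction on a cell structure of $B$, can also be made to work but is more laborious and requires care with how $D_G$ interacts with cofiber sequences.
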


The $p$-fold diagonal map $S^1 \to S^1 \wedge \dots \wedge S^1 \cong
S^p$ induces maps $\Sigma D_G(B) \to D_G(\Sigma B)$ as
in \cite{BMMS}*{\textsection II.3}.  Applied to desuspensions of $B$,
these assemble to an inverse system
\begin{equation} \label{equ_inverseextendedpowers}
\dots \longto \Sigma^{n+1} D_G(\Sigma^{-n-1} B)
\overset{\Sigma^n\Delta}\longto \Sigma^n D_G(\Sigma^{-n} B) \longto
\dots \overset{\Delta}\longto D_G(B)
\end{equation}
in the stable homotopy category.  This is a tower of bounded below
spectra of finite type over~$\F_p$, so it makes sense to talk about its
associated continuous cohomology.

We now follow \cite{BMMS}*{\textsection II.5}, but focus on the case $G =
C_p$ instead of $G = \Sigma_p$.  There is an additive isomorphism
$$
H_*(D_{C_p}(B)) \cong
  \F_p\{ e_0 \otimes \alpha_1 \otimes \dots \otimes \alpha_p \}
  \oplus \F_p\{ e_j \otimes \alpha^{\otimes p} \}
$$
where the $\alpha_i$ and $\alpha$ range over a basis for $H_*(B)$, the
$\alpha_i$ are not all equal, and only one representative is taken from
each $C_p$-orbit of the tensors $\alpha_1 \otimes \dots \otimes \alpha_p$.
The grading is determined by $\deg(e_j) = j$.
Dually, there is an isomorphism
$$
H^*(D_{C_p}(B)) \cong
  \F_p\{ w_0 \otimes a_1 \otimes \dots \otimes a_p\}
  \oplus \F_p\{ w_j \otimes a^{\otimes p} \}
$$
where the $a_i$ and $a$ range over the dual basis for $H^*(B)$, and
$w_j \otimes a^{\otimes p}$ is dual to $e_j \otimes \alpha^{\otimes p}$
when $a$ is dual to $\alpha$.  It follows that
\begin{equation} \label{equ_cohomsuspextpower}
H^*(\Sigma^n D_{C_p}(\Sigma^{-n} B)) \cong \F_p\{ \Sigma^n w_0
	\otimes \Sigma^{-n} a_1 \otimes \dots \otimes \Sigma^{-n} a_p\}
\oplus \F_p\{ \Sigma^n w_j \otimes (\Sigma^{-n} a)^{\otimes p} \} \,.
\end{equation}
By \cite{BMMS}*{II.5.6}, the map $\Delta$
in~\eqref{equ_inverseextendedpowers} is given in cohomology as
$$
\Delta^*( w_j \otimes a^{\otimes p} ) = (-1)^{j+1} \alpha(q) \cdot
	\Sigma w_{j+p-1} \otimes (\Sigma^{-1} a)^{\otimes p}
$$
where $\deg(a) = q$, $m = (p-1)/2$ and $\alpha(q) = -(-1)^{mq} m!$.
For $p=2$ the numerical coefficient should be read as $1$.
The other classes $w_0 \otimes a_1 \otimes \dots \otimes a_p$ map to zero.
It follows that
\begin{equation} \label{equ_cohosigmandelta}
(\Sigma^n\Delta)^*(\Sigma^n w_j \otimes (\Sigma^{-n} a)^{\otimes p})
= (-1)^{j+1} \alpha(q{-}n) \cdot \Sigma^{n+1} w_{j+p-1} \otimes
(\Sigma^{-n-1} a)^{\otimes p}  \,.
\end{equation}
The action of the Steenrod algebra $\A$ on $H^*(D_{C_p}(B))$ is given
by the Nishida relations.  Together with the explicit formula above
for the maps $(\Sigma^n\Delta)^*$, this determines the direct limit of
cohomology groups as an $\A$-module.

Miller observed that this direct limit can be described in closed
form by the Singer construction on the $\A$-module $H^*(B)$, up to a
single degree shift.  We now give the $C_p$-equivariant extension of
the $\Sigma_p$-equivariant case discussed in \cite{BMMS}*{II.5.1}.

\begin{thm} \label{thm_TopSinger}
For each spectrum $B$ that is bounded below and of finite type
over~$\F_p$, there is a natural isomorphism of $\A$-modules
$$
\omega \: \colim_{n\to\infty} H^*(\Sigma^n D_{C_p}(\Sigma^{-n} B))
\overset{\cong}\longto \Sigma^{-1} R_+(H^*(B)) \,.
$$
\end{thm}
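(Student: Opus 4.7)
The plan is to construct $\omega$ explicitly on a basis, verify compatibility with the direct system, and then check $\A$-linearity via the Nishida relations.

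First I would unpack the direct system in cohomology. From \eqref{equ_cohomsuspextpower} each $H^*(\Sigma^n D_{C_p}(\Sigma^{-n}B))$ decomposes as a ``non-diagonal'' summand spanned by classes $\Sigma^n w_0 \otimes \Sigma^{-n}a_1 \otimes \cdots \otimes \Sigma^{-n}a_p$ (with the $a_i$'s not all equal), and a ``diagonal'' summand spanned by $\Sigma^n w_j \otimes (\Sigma^{-n}a)^{\otimes p}$. The formula \eqref{equ_cohosigmandelta} together with the accompanying statement that $(\Sigma^n \Delta)^*$ kills the non-diagonal generators shows that these summands contribute nothing to the colimit; the colimit is computed from the diagonal summands alone.

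Second, I would define $\omega$ levelwise on the diagonal classes. For $p$ odd, writing $j = 2k+i$ with $i\in\{0,1\}$ and $q = \deg(a)$, set
$$
\omega_n\bigl(\Sigma^n w_j \otimes (\Sigma^{-n}a)^{\otimes p}\bigr)
= \varepsilon(j,n,q)\cdot x^i y^{k + m(q-n)}\otimes a,
$$
and $\omega_n = 0$ on the non-diagonal summand (the $p=2$ analogue is $\omega_n(\Sigma^n w_j\otimes (\Sigma^{-n}a)^{\otimes 2}) = x^{j+q-n}\otimes a$, with no signs to keep track of). A degree check gives $j + pq - n(p-1) = i + 2(k+m(q-n)) + q$, so the target sits in $\Sigma^{-1}R_+(H^*(B))$ in the correct degree. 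Imposing $\omega_{n+1}\circ (\Sigma^n\Delta)^* = \omega_n$ and using \eqref{equ_cohosigmandelta} forces the recursion
$$
\varepsilon(j,n,q) = (-1)^{j+1}\alpha(q-n)\,\varepsilon(j+p-1,n+1,q),
$$
which admits nonzero solutions in $\F_p^\times$ because $\alpha(q-n) = -(-1)^{m(q-n)}m!$ is a unit modulo $p$. Concretely one may set $\varepsilon(j,n,q) = 1$ for the ``initial'' pairs with $0 \le j < p-1$ and propagate using the recursion. Injectivity of the assembled $\omega$ on the colimit is then built into the definition, and surjectivity follows because for any $i\in\{0,1\}$, $r\in\Z$, and $a$ of degree $q$, the class $x^i y^r \otimes a$ is hit by $\omega_n$ as soon as $n$ is large enough that $r - m(q-n) \ge 0$.

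The main obstacle is showing that $\omega$ is $\A$-linear. On the topological side, the Steenrod operations act on the diagonal classes by the Nishida relations, which express $\Sq^s(\Sigma^n w_j \otimes (\Sigma^{-n}a)^{\otimes p})$ (and the analogous $\SP^s$, $\beta$ for $p$ odd) as sums of terms of the form $\binom{?}{?}\Sigma^n w_{j'}\otimes (\Sigma^{-n}\Sq^{j''}a)^{\otimes p}$. On the algebraic side, the action is given by the formulas of Definition~\ref{dfn_rplus}. After substituting $j = 2k+i$ and $r = k + m(q-n)$, one must verify that the binomial coefficients produced by the Nishida relations coincide, term by term, with the coefficients $\binom{r-j}{s-2j}$ in \eqref{equ_singerops} and its odd primary analogues — which is precisely how Singer's formulas were discovered. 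The $\Sigma_p$-equivariant version of this verification appears in \cite{BMMS}*{II.5}; the $C_p$-version proceeds along the same lines, the only substantive difference being that every monomial $w_j$ (not just those in the $\Sigma_p$-invariant subalgebra) contributes to the colimit, accounting for the factor-of-$(p-1)$ enlargement relative to the $\Sigma_p$-Singer construction. Carrying the signs $\varepsilon(j,n,q)$ and the $(-1)^r$ signs already present in Definition~\ref{dfn_rplus} through the computation is the most delicate piece of bookkeeping.
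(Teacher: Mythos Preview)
Your outline is the paper's own strategy: kill the non-diagonal summand in the colimit, define $\omega$ levelwise on the diagonal classes $\Sigma^n w_j\otimes(\Sigma^{-n}a)^{\otimes p}$, check compatibility with the bonding maps, and then verify $\A$-linearity by matching the Nishida relations against Definition~\ref{dfn_rplus}. For $p=2$ your formula $\omega_n(\Sigma^n w_j\otimes(\Sigma^{-n}a)^{\otimes 2})=x^{j+q-n}\otimes a$ is exactly the paper's.

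The one real gap is your odd-$p$ normalization. The recursion
\[
\varepsilon(j,n,q)=(-1)^{j+1}\alpha(q-n)\,\varepsilon(j+p-1,n+1,q)
\]
has an independent one-parameter family of solutions for each orbit of $(j,n)\mapsto(j+p-1,n+1)$, and the Steenrod action mixes these orbits: already $\beta$ links odd-$j$ orbits to even-$j$ orbits, and $\SP^s$ moves between orbits with different values of the invariant $k+m(q-n)$. So an arbitrary solution of the recursion need not give an $\A$-linear $\omega$, and your convention $\varepsilon(j,n,q)=1$ for $0\le j<p-1$ has no particular reason to. The paper commits instead to the explicit signs
\[
\varepsilon(2k,n,q)=(-1)^{q-n}\nu(q-n)^{-1},\qquad
\varepsilon(2k+1,n,q)=(-1)^{q}\nu(q-n)^{-1},
\]
with $\nu(2j+\epsilon)=(-1)^j(m!)^\epsilon$, verifies the recursion via the identity $\alpha(q)\nu(q-1)^{-1}=\nu(q)^{-1}$, and reduces the $\A$-linearity check (against \cite{BMMS}*{II.5.5}) to the further identity $\alpha(q)(-1)^{q+1}\nu(q+1)^{-1}=(-1)^q\nu(q)^{-1}$. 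Your sketch becomes a proof once you either adopt these signs, or leave $\varepsilon$ genuinely undetermined and let the $\A$-linearity constraints force it; fixing the normalization first and hoping the bookkeeping works out is the step that does not go through as written.
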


\begin{proof}
For $p=2$ the isomorphism is given by
$$
\omega(\Sigma^n w_{j+n} \otimes (\Sigma^{-n} a)^{\otimes 2})
	= x^{j+q} \otimes a
$$
where $\deg(a) = q$.  For $p$ odd, the isomorphism 
is given by
$$
\omega(\Sigma^n w_{2(r+mn)} \otimes (\Sigma^{-n} a)^{\otimes p})
	= (-1)^{q-n} \nu(q{-}n)^{-1} \cdot y^{r+mq} \otimes a
$$
and
$$
\omega(\Sigma^n w_{2(r+mn)-1} \otimes (\Sigma^{-n} a)^{\otimes p})
	= (-1)^{q} \nu(q{-}n)^{-1} \cdot x y^{r+mq-1} \otimes a \,,
$$
where $\deg(a) = q$, $m = (p-1)/2$ and $\nu(2j+\epsilon) = (-1)^j
(m!)^\epsilon$ for $\epsilon \in \{0, 1\}$.

It follows from~\eqref{equ_cohosigmandelta} and the relation $\alpha(q)
\nu(q-1)^{-1} = \nu(q)^{-1}$ that these homomorphisms are compatible under
$(\Sigma^n\Delta)^*$.  Then it is clear from~\eqref{equ_cohomsuspextpower}
that $\omega$ is an additive isomorphism.  It also commutes
with the Steenrod operations on the extended powers, described in
\cite{BMMS}*{II.5.5}, and the explicitly defined Steenrod operations
on the Singer construction, given in Definition~\ref{dfn_rplus}.
After some computation, the thing to check is that $\alpha(q) (-1)^{q+1}
\nu(q+1)^{-1} = (-1)^q \nu(q)^{-1}$, which follows from the relation
above and $\nu(q+1) = -\nu(q-1)$.
\end{proof}

\subsection{The relationship between the Tate and Singer constructions}
\label{subsec_topsinger}
We now show that the inverse system~\eqref{equ_inverseextendedpowers}
for $G = C_p$ and $B$ a bounded below spectrum can be realized, up to
a single suspension, as the Tate tower in~\eqref{equ_tatetowers} for a
$C_p$-spectrum $X = B^{\wedge p}$.

As a naive $C_p$-spectrum, $B^{\wedge p}$ is equivalent to the $p$-fold
smash product $B \wedge \dots \wedge B$, with the $C_p$-action given by
cyclic permutation of the factors.  The genuinely equivariant definition
of $B^{\wedge p}$ is obtained by specialization from B{\"o}kstedt's
definition in \cite{B1}, \cite{BHM1993} of the topological Hochschild
homology $\THH(B)$ of a symmetric ring spectrum $B$, in the sense
of~\cite{HSS2000}.  Namely, $B^{\wedge p} = \sd_p \THH(B)_0 =
\THH(B)_{p-1}$ equals the $0$-simplices of the $p$-fold edgewise
subdivision of $\THH(B)$, which in turn equals the $(p-1)$-simplices
of $\THH(B)$.

The ring structure on $B$ is only relevant for the simplicial structure on
$\THH(B)$, and is not needed for the formation of its $(p-1)$-simplices.
However, it is necessary to assume that the spectrum $B$ is realized
as a symmetric spectrum.  We now make a review of definitions, in order to
compare the B{\"o}kstedt-style smash powers of symmetric spectra with
the external powers of Lewis--May spectra.

From now on, let $\U$ be the complete $C_p$-universe
$$
\U = \R^\infty \oplus \dots \oplus \R^\infty = (\R^\infty)^p
$$
with $C_p$-action given by cyclic permutation of summands.  The inclusion
$i \: \R^\infty \to \U$ is the diagonal embedding $\Delta \: \R^\infty
\to (\R^\infty)^p$.  Let $\fA = \{\R^n \mid n\ge0\}$ be the sequential
indexing set in $\R^\infty$, and let $\fA^p = \{\R^n \oplus \dots \oplus
\R^n = (\R^n)^p \mid n\ge0\}$ be the associated diagonal indexing set
\cite{LMS}*{\textsection VI.5} in $\U$.  Recall that a prespectrum $D$
is \emph{$\Sigma$-cofibrant} in the sense of \cite{LMS}*{I.8.7}, hence
good in the sense of Hesselholt and Madsen~\cite{HM1997}*{Def.~A.1},
if each structure map $\Sigma^{W-V} D(V) \to D(W)$ is a cofibration for
$V \subseteq W$ in the indexing set.  There is a functorial thickening
$D^\tau$ of prespectra, which produces $\Sigma$-cofibrant, hence good,
prespectra, and there is a natural spacewise equivalence $D^\tau
\to D$, see \cite{HM1997}*{Lem.~A.1}.  All of this works just as well
equivariantly.

Let $B$ be a symmetric spectrum of topological spaces, with $n$-th space
$B_n$ for each $n\ge0$.  Recall that $B$ is $S$-cofibrant in the sense of
\cite{HSS2000}*{5.3.6} if the natural map $\nu_n \: L_n B \to B$ is a
cofibration for each $n$, where the latching space $L_n B$ is the $n$-th
space in the spectrum $B \wedge \bar S$.  Here $\bar S$ is the symmetric
spectrum with $0$-th space $*$ and $n$-th space $S^n$, for $n>0$.
We prefer to follow the terminology in \cite{Schwede}*{III.1.2}
and refer to the $S$-cofibrant symmetric spectra as being \emph{flat}.
Every symmetric spectrum is level equivalent, hence stably
equivalent, to a flat symmetric spectrum, so any spectrum may be modeled
by a flat symmetric spectrum.
Each symmetric spectrum~$B$ has an underlying sequential prespectrum
indexed on $\fA$, with $B(\R^n) = B_n$ equal to the $n$-th space
of $B$.  The structure map $\sigma \: B(\R^{n-1}) \wedge S^1 \to
B(\R^n)$ factors as $B_{n-1} \wedge S^1 \overset{\iota_n}\longto L_n B
\overset{\nu_n}\longto B_n$, where $\iota_n$ is always a cofibration.
Hence the underlying prespectrum of a flat symmetric spectrum is
$\Sigma$-cofibrant.

\begin{dfn} \label{dfn_bsmashb}
Let $B$ be a symmetric spectrum, with $n$-th space $B_n$ for each
$n\ge0$, and let $I$ be B{\"o}kstedt's category of finite sets
$\bfn = \{1, 2, \dots, n\}$ for $n\ge0$ and injective functions.
Let $B^{\wedge p}_{\pre}$ be the $C_p$-equivariant prespectrum
with $V$-th space
$$
B^{\wedge p}_{\pre}(V) = \hocolim_{(\bfn_1, \dots, \bfn_p) \in I^p}
\Map(S^{n_1} \wedge \dots \wedge S^{n_p},
	B_{n_1} \wedge \dots \wedge B_{n_p} \wedge S^V)
$$
for each finite dimensional $V \subset \U$.  Here $C_p$ acts by cyclically
permuting the $\bfn_i$, the $S^{n_i}$ and the $B_{n_i}$, as well as
acting on $S^V$.  Let
$$
B^{\wedge p} = L((B^{\wedge p}_{\pre})^\tau)
$$
be the genuine $C_p$-spectrum in $C_p\S\U$ obtained by spectrification
from the functorial good thickening $(B^{\wedge p}_{\pre})^\tau$ of
this prespectrum.  The natural maps
$$
B^{\wedge p}_{\pre}(V)
\overset{\simeq}\longleftarrow
(B^{\wedge p}_{\pre})^{\tau}(V)
\overset{\simeq}\longrightarrow
B^{\wedge p}(V)
$$
are $C_p$-equivariant equivalences by the proof of
\cite{HM1997}*{Prop.~2.4}.
\end{dfn}


\begin{dfn}
Let $B$ be a prespectrum indexed on $\fA$.  The $p$-fold external
smash product $B^{(p)}$ is the spectrification in $C_p\S\U$ of the
$C_p$-equivariant prespectrum
$$
B^{(p)}_{\pre}((\R^n)^p) = B(\R^n) \wedge \dots \wedge B(\R^n)
= B(\R^n)^{\wedge p}
$$
indexed on $\fA^p$.  When $B$ is $\Sigma$-cofibrant, so is
$B^{(p)}_{\pre}$, hence the $V$-th space of the spectrification is given
by the colimit
$$
B^{(p)}(V) = \colim_{V \subseteq (\R^n)^p}
	\Map(S^{(\R^n)^p - V}, B(\R^n)^{\wedge p}) \,.
$$
Here the colimit runs over the $n \in \N_0$ such that $V \subseteq
(\R^n)^p$, and $(\R^n)^p - V$ denotes the orthogonal complement of $V$
in $(\R^n)^p$.  Suspension by $V$ induces an isomorphism
$$
B^{(p)}(V) \cong \colim_{n \in \N_0}
\Map((S^n)^{\wedge p}, B(\R^n)^{\wedge p} \wedge S^V) \,,
$$
with inverse given by suspension by $(\R^m)^p - V$, followed by $p$
instances of the stabilization $B(\R^n) \wedge S^m \to B(\R^{n+m})$,
for $m$ sufficiently large.
\end{dfn}

We say that a symmetric spectrum $B$ is \emph{convergent} if there are
integers $\lambda(n)$ that grow to infinity with $n$, such that $B_n$
is $((n/2) + \lambda(n))$-connected and the structure map $\Sigma B_n \to
B_{n+1}$ is $(n+\lambda(n))$-connected, for all sufficiently large~$n$.
These hypotheses suffice for the use of B{\"o}kstedt's approximation lemma
in the following proof.  Every symmetric spectrum is stably equivalent
to a convergent one.

\begin{lemma} \label{lem_comparepthpowers}
Let $B$ be a flat, convergent symmetric spectrum.  There is
a natural chain of weak equivalences of naive $C_p$-spectra
$$
i^* B^{(p)} \simeq i^* B^{\wedge p} \,.
$$
\end{lemma}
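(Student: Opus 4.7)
The plan is to compare the two prespectra underlying $B^{(p)}$ and $B^{\wedge p}$ on the diagonal indexing set $\fA^p \subset \U$, using the diagonal subcategory of $I^p$ together with B{\"o}kstedt's approximation lemma. The two spectra are spectrifications of $C_p$-equivariant prespectra, so it suffices to produce a $C_p$-equivariant spacewise weak equivalence of prespectra indexed on a cofinal family in $\fA^p$ and then invoke spectrification.

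First I would construct the comparison map. The diagonal functor $\Delta \: I \to I^p$, $\bfn \mapsto (\bfn, \dots, \bfn)$, is fixed by the cyclic $C_p$-action on $I^p$. For $V = (\R^n)^p$ in $\fA^p$, the adjunction unit $B_n^{\wedge p} \to \Map((S^n)^{\wedge p}, B_n^{\wedge p} \wedge S^V)$ composed with the canonical map from the $(\bfn,\dots,\bfn)$-vertex into the homotopy colimit yields a natural $C_p$-equivariant map
\[
\varphi_V \: B^{(p)}_{\pre}(V) = B_n^{\wedge p} \longrightarrow B^{\wedge p}_{\pre}(V).
\]
Flatness of $B$ makes the underlying prespectrum $\Sigma$-cofibrant, so $B^{(p)}_{\pre}$ is $\Sigma$-cofibrant as well, and the maps $\varphi_V$ respect the structure maps, defining a $C_p$-equivariant map of $\Sigma$-cofibrant prespectra on $\fA^p$.

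The main step is to show that $\varphi$ induces a weak equivalence on spectrifications. This is the content of B{\"o}kstedt's approximation lemma applied to the functor $(\bfn_1,\dots,\bfn_p) \mapsto \Map(S^{n_1}\wedge\dots\wedge S^{n_p}, B_{n_1}\wedge\dots\wedge B_{n_p}\wedge S^V)$ on $I^p$: the convergence hypothesis on $B$ supplies connectivity estimates on the structure maps that force the diagonal subcategory to be asymptotically cofinal in $I^p$, with the connectivity of the induced map $\colim_n \Map((S^n)^{\wedge p}, B_n^{\wedge p} \wedge S^V) \to B^{\wedge p}_{\pre}(V)$ growing with $\dim V$. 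Since the diagonal is pointwise $C_p$-fixed, this equivalence is automatically $C_p$-equivariant. Passing to spectrifications (and through the thickening $(B^{\wedge p}_{\pre})^\tau$ of Definition~\ref{dfn_bsmashb}, whose structure maps to both $B^{\wedge p}_{\pre}$ and $B^{\wedge p}$ are $C_p$-equivalences), and restricting along $i \: \R^\infty \to \U$ to the diagonal indexing set, we obtain the desired chain of equivalences of naive $C_p$-spectra.

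The main obstacle is verifying B{\"o}kstedt's approximation lemma in this equivariant setting with sufficient uniformity in $V$. Convergence guarantees that the connectivity of the map from the diagonal colimit into the homotopy colimit over $I^p$ is controlled by a function that grows with $V$, so the spacewise statement stabilizes to the required equivalence of naive $C_p$-spectra. This is essentially the same mechanism used in \cite{HM1997} to compare cyclic-bar and B{\"o}kstedt models of $\THH$, specialized from the cyclic to the cyclic-group level.
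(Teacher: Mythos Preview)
Your approach is essentially the paper's. The paper makes the chain explicit by factoring, for each fixed $V \subset \R^\infty$, as
\[
\colim_{n \in \N_0} \ \overset{\simeq}{\longleftarrow}\ \hocolim_{n \in \N_0}\ \overset{\simeq}{\longrightarrow}\ \hocolim_{\N_0^p}\ \overset{\simeq}{\longrightarrow}\ \hocolim_{I^p}
\]
of the mapping spaces $\Map((S^n)^{\wedge p}, B_n^{\wedge p} \wedge S^V)$, using flatness (hence $\Sigma$-cofibrancy) for the first arrow, cofinality of the diagonal $\N_0 \to \N_0^p$ for the second, and B{\"o}kstedt's approximation lemma together with convergence for the third. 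One point worth sharpening in your write-up: each of these maps is a \emph{spacewise} weak equivalence for every $V$, not merely a map whose connectivity grows with $\dim V$; your weaker statement would suffice after spectrification, but the stronger one is what actually holds and is what the paper records.
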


\begin{proof}
For every finite dimensional $V \subset \R^\infty$ we have a natural
chain of $C_p$-equivariant maps
$$
\xymatrix{
\displaystyle{\colim_{n \in \N_0}}
	\Map((S^n)^{\wedge p}, B(\R^n)^{\wedge p} \wedge S^V) \\
\displaystyle{\hocolim_{n \in \N_0}}
	\Map((S^n)^{\wedge p}, B(\R^n)^{\wedge p} \wedge S^V)
\ar[u]_\simeq \ar[d]^\simeq \\
\displaystyle{\hocolim_{(n_1, \dots, n_p) \in \N_0^p}}
	\Map(S^{n_1} \wedge \dots \wedge S^{n_p},
	B_{n_1} \wedge \dots \wedge B_{n_p} \wedge S^V)
\ar[d]^\simeq \\
\displaystyle{\hocolim_{(\bfn_1, \dots, \bfn_p) \in I^p}}
	\Map(S^{n_1} \wedge \dots \wedge S^{n_p},
	B_{n_1} \wedge \dots \wedge B_{n_p} \wedge S^V)
}
$$
connecting $B^{(p)}(V)$ to $B^{\wedge p}_{\pre}(V)$.  The upper map
is a weak equivalence because $B$ is flat, hence $\Sigma$-cofibrant.
The middle map is a weak homotopy equivalence because the diagonal
$\N_0 \to \N_0^p$ is (co-)final.  The lower map is a weak equivalence
by convergence, the fact that $\N_0^p$ is filtering, and B{\"o}kstedt's
approximation lemma \cite{B1}*{1.5}, see \cite{B2000}*{2.5.1}
for a published proof.  Applying the thickening construction and
spectrifying, we get the desired chain of naively $C_p$-equivariant weak
equivalences.
\end{proof}

\begin{dfn}
For $p=2$, let $\R(1)$ be the sign representation of $C_2$.  For $p$ odd,
let $\C(1)$ be the standard rank~$1$ representation of $C_p \subset S^1$,
and let $\C(i)$ be its $i$-th tensor power.  For all primes $p$, let
$W
\subset \R^p$ be the orthogonal complement of the diagonal copy of $\R$.
Then $W \cong \R(1)$ for $p=2$, and $W \cong \C(1) \oplus \dots \oplus
\C(m)$ for $p$ odd, where $m = (p-1)/2$.  Let $EC_p = S(\infty W)$,
$\tEC_p = S^{\infty W}$, and give $EC_p$ a $C_p$-CW structure so that
$$
\tEC_p^{((p-1)n)} = S^{nW}
$$
for all $n\ge0$.  Then $\tF{(p-1)n} = S^{nW}$ for all integers~$n$.
\end{dfn}

\begin{prop} \label{prop_taterewrite}
Let $B$ be a flat and convergent symmetric spectrum, and give
$EC_p$ a free $C_p$-CW structure as in the definition above.
There is a natural weak equivalence
$$
(B^{\wedge p})_{hC_p} = (EC_{p+} \wedge i^* B^{\wedge p})/C_p
\simeq EC_p \ltimes_{C_p} B^{(p)} = D_{C_p}(B) \,.
$$
More generally, there are weak equivalences
$$
(B^{\wedge p})^{tC_p}[1{-}(p{-}1)n] \simeq \Sigma^{1+n} D_{C_p}(\Sigma^{-n} B)
$$
for all $n\ge0$, which are compatible with the $(p-1)$-fold
composites of maps in the Tate tower~\eqref{equ_tatetowers} for $X =
B^{\wedge p}$
$$
\dots \to (B^{\wedge p})^{tC_p}[1{-}(p{-}1)(n{+}1)] \to
(B^{\wedge p})^{tC_p}[1{-}(p{-}1)n] \to \dots \,,
$$
and the suspension of the inverse system~\eqref{equ_inverseextendedpowers}
for $G = C_p$
$$
\dots \to \Sigma^{1+n+1} D_{C_p}(\Sigma^{-n-1} B) 
\to \Sigma^{1+n} D_{C_p}(\Sigma^{-n} B) \to \dots \,.
$$
\end{prop}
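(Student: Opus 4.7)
The plan is to reduce everything to the Warwick-duality form of the Tate tower provided by Lemma~\ref{lem_tatecomparison}, and then compare representation spheres to diagonal smash powers. First I would handle the $n=0$ statement about homotopy orbits: by Lemma~\ref{lem_comparepthpowers}, the underlying naive $C_p$-spectra $i^*B^{\wedge p}$ and $i^*B^{(p)}$ are weakly equivalent, so smashing with the free $C_p$-CW complex $EC_{p+}$ and taking orbits gives
\[
(B^{\wedge p})_{hC_p} = (EC_{p+} \wedge i^*B^{\wedge p})/C_p
	\simeq (EC_{p+} \wedge i^*B^{(p)})/C_p = D_{C_p}(B) \,.
\]

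For the main equivalence, I would take $m = (p-1)n$ in Lemma~\ref{lem_tatecomparison}. Together with the identification $\tF{(p-1)n} = S^{nW}$ from the chosen $C_p$-CW structure on $\tEC_p$, this gives
\[
(B^{\wedge p})^{tC_p}[1-(p-1)n] \simeq \Sigma \map(S^{nW}, EC_{p+} \wedge B^{\wedge p})^{C_p} \,.
\]
Since $S^{nW}$ is a finite (hence dualizable) $C_p$-CW spectrum with dual $S^{-nW}$, the mapping spectrum rewrites as $(S^{-nW} \wedge EC_{p+} \wedge B^{\wedge p})^{C_p}$. The group $C_p$ is discrete, so $\ad C_p = 0$, and the Adams transfer equivalence~\eqref{equ_adamstransfer} applied to $Y = S^{-nW} \wedge B^{\wedge p}$ (together with the $G$-Whitehead theorem, using that $EC_{p+}$ is free) identifies this with the orbit spectrum $(EC_{p+} \wedge S^{-nW} \wedge B^{\wedge p})/C_p$.

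Now I would use the orthogonal decomposition $(\R^n)^p = \R^n \oplus nW$ as $C_p$-representations, which gives $S^{nW} \wedge S^n \cong S^{(\R^n)^p}$ with $S^n$ carrying the trivial action, hence $S^{-nW} \simeq \Sigma^n \Sigma^{-(\R^n)^p}$. The shifting identity $\Sigma^{-(\R^n)^p}(B^{(p)}) \simeq (\Sigma^{-n}B)^{(p)}$ then follows directly from the prespectrum definition of the external smash product indexed on $\fA^p$: desuspending by the $p$-fold diagonal representation sphere amounts to desuspending each external factor. Combining with Lemma~\ref{lem_comparepthpowers} once more,
\[
(B^{\wedge p})^{tC_p}[1-(p-1)n] \simeq \Sigma^{1+n}(EC_{p+} \wedge (\Sigma^{-n}B)^{(p)})/C_p = \Sigma^{1+n}D_{C_p}(\Sigma^{-n}B) \,.
\]

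The remaining point is compatibility with the structure maps. Passing from $n$ to $n+1$ on the left replaces $\tF{(p-1)n} = S^{nW}$ by $\tF{(p-1)(n+1)} = S^{(n+1)W}$, and the induced map is the $(p-1)$-fold composite in the Tate tower. On the right, the map $\Sigma\Delta$ is induced by the $p$-fold diagonal $S^1 \to S^1 \wedge \dots \wedge S^1 = S^p$. Under the decomposition $S^{\R^p} \cong S^1 \wedge S^W$ (where the first factor has trivial action), the $C_p$-inclusion $S^{nW} \hookrightarrow S^{(n+1)W}$, after one suspension by the trivial line, corresponds precisely to smashing with the $p$-fold diagonal; this is the step I would expect to require the most care, since it involves pinning down equivariant representatives of the $\Delta$-maps of \cite{BMMS}*{\textsection II.3} and checking naturality in $n$. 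With that identification the chain of equivalences constructed above is natural in~$n$, which establishes the desired compatibility of towers.
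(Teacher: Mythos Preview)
Your proof is correct and follows essentially the same path as the paper's: both use Lemma~\ref{lem_comparepthpowers}, the Adams transfer, Lemma~\ref{lem_tatecomparison}, and the identification $S^n \wedge S^{nW} \cong (S^n)^{\wedge p}$. The only difference is organizational: the paper packages the representation-sphere manipulation as a single citation of \cite{LMS}*{VI.1.5} (carried out at the level of the twisted half-smash product $EC_p \ltimes (-)$ before passing to orbits, with the compatibility of $\Delta$ and $S^0 \to S^W$ handled by the same reference), and it explicitly invokes the untwisting theorem \cite{LMS}*{VI.1.17} for the identification $(EC_{p+} \wedge i^* B^{(p)})/C_p \simeq EC_p \ltimes_{C_p} B^{(p)} = D_{C_p}(B)$, which you use without naming.
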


\begin{proof}
By the untwisting theorem \cite{LMS}*{VI.1.17} and
Lemma~\ref{lem_comparepthpowers} there are
$C_p$-equivariant equivalences
$$
EC_p \ltimes B^{(p)} \simeq EC_{p+} \wedge i^* B^{(p)} \simeq
	EC_{p+} \wedge i^* B^{\wedge p} \,,
$$
since $EC_p$ is $C_p$-free.  The first claim follows by passage
to $C_p$-orbit spectra.

More generally, there are $C_p$-equivariant equivalences
$$
\Sigma^{1+n} EC_p \ltimes (\Sigma^{-n} B)^{(p)} \simeq
\Sigma \map(S^{nW}, EC_p \ltimes B^{(p)}) \simeq
\Sigma \map(S^{nW}, EC_{p+} \wedge i^* B^{\wedge p})
$$
by \cite{LMS}*{VI.1.5}, since $S^n \wedge S^{nW} \cong
(S^n)^{\wedge p}$.  Passing to $C_p$-orbits, and using the Adams transfer
equivalence~\eqref{equ_adamstransfer}, we get the equivalences
$$
\Sigma^{1+n} D_{C_p}(\Sigma^{-n} B) \simeq
\Sigma \map(S^{nW}, EC_{p+} \wedge B^{\wedge p})^{C_p} \,.
$$
The right hand side is a model for $(B^{\wedge p})^{tC_p}[1{-}(p{-}1)n]$,
by Lemma~\ref{lem_tatecomparison}, since $\tF{(p-1)n} = S^{nW}$.  The
stabilization of the left hand side given by $\Delta \: S^1 \to S^p$
is compatible under all of these equivalences with the stabilization
of the right hand side given by the inclusion $S^0 \to S^W$, again by
\cite{LMS}*{VI.1.5}.
\end{proof}

\begin{dfn}
For each symmetric spectrum $B$ let the \emph{topological Singer
construction} on $B$ be the spectrum
$$
R_+(B) = (B^{\wedge p})^{tC_p} \,.
$$
\end{dfn}

The topological Singer construction realizes the algebraic Singer
constructions, in the following sense.

\begin{thm} \label{thm_topmodelsalg}
Let $B$ be a symmetric spectrum that is bounded below and of finite
type over $\F_p$.  There are natural isomorphisms
$$
\omega \: H^*_c(R_+(B)) \overset{\cong}\longto R_+(H^*(B))
$$
and
$$
\omega_* \: R_+(H_*(B)) \overset{\cong}\longto H_*^c(R_+(B))
$$
of $\A$-modules and complete $\A_*$-comodules, respectively.
\end{thm}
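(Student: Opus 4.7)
The plan is to deduce both isomorphisms by combining the topological identification of the Tate tower from Proposition \ref{prop_taterewrite} with Miller's cohomological calculation from Theorem \ref{thm_TopSinger}, and then passing to the $\F_p$-linear dual for the homological statement.

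First I would treat the cohomological case. By definition $H^*_c(R_+(B)) = \colim_{n\to-\infty} H^*((B^{\wedge p})^{tC_p}[n])$, and since the integers $\{1-(p-1)n\}_{n\ge 0}$ form a cofinal subsequence going to $-\infty$, this colimit may be computed over the corresponding cofinal subtower. By Proposition \ref{prop_taterewrite} this subtower is equivalent, compatibly with the structure maps, to the suspended inverse system $\{\Sigma^{1+n} D_{C_p}(\Sigma^{-n} B)\}_{n\ge 0}$. Applying $H^*(-)$ and passing to the colimit, Theorem \ref{thm_TopSinger} yields
$$
H^*_c(R_+(B)) \cong \Sigma \colim_{n\to\infty} H^*(\Sigma^n D_{C_p}(\Sigma^{-n} B)) \cong \Sigma\cdot \Sigma^{-1} R_+(H^*(B)) = R_+(H^*(B)).
$$
Since the isomorphism of Theorem \ref{thm_TopSinger} is already $\A$-linear and all intermediate identifications are maps of $\A$-modules, this defines the natural $\A$-module isomorphism $\omega$.

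For the homological statement I would dualize. By the finite-type assumption on $B$, each $H^*((B^{\wedge p})^{tC_p}[n])$ is degreewise finite, so the universal coefficient theorem identifies it with the $\F_p$-linear dual of $H_*((B^{\wedge p})^{tC_p}[n])$. By Corollary \ref{cor_HcY}, $H^c_*(R_+(B))$ is a complete $\A_*$-comodule that is the continuous dual of $H^*_c(R_+(B))$. Definition \ref{dfn_homologysinger} describes $R_+(H_*(B)) = \Hom(R_+(H^*(B)), \F_p)$ as a complete $\A_*$-comodule, realized as the inverse limit over the Tate filtration of $R_+(H^*(B))$. Applying $\Hom(-,\F_p)$ to $\omega$ then yields $\omega_*$, provided we know that $\omega$ identifies the topological filtration (coming from the Tate tower) with the algebraic Tate filtration on $R_+(H^*(B))$ from \eqref{equ_rplusfiltration}; this compatibility is what guarantees that $\omega_*$ and its inverse are continuous, hence that $\omega_*$ is an isomorphism of complete $\A_*$-comodules.

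The main obstacle is precisely this filtration-matching step, which is the content of the promised Corollary \ref{cor_comparefiltrations}. On the topological side, the filtration of $H^*_c(R_+(B))$ arises as the images of $H^*((B^{\wedge p})^{tC_p}[1-(p-1)n])$, and by Proposition \ref{prop_taterewrite} these are the images of $H^*(\Sigma^{1+n} D_{C_p}(\Sigma^{-n} B))$. On the algebraic side, $F^m R_+(H^*(B))$ is spanned by elements $\Sigma x^i y^r \otimes a$ with $1 + i + 2r - (p-1)\deg(a) \ge m$ (and analogously for $p=2$). Comparing the explicit description of $H^*(\Sigma^n D_{C_p}(\Sigma^{-n} B))$ in \eqref{equ_cohomsuspextpower} with the explicit formula for $\omega$ given in the proof of Theorem \ref{thm_TopSinger}, one reads off that the image at stage $n$ corresponds exactly to the subspace spanned by basis elements satisfying $1 + i + 2r - (p-1)q \ge 1-(p-1)n$, matching $F^{1-(p-1)n} R_+(H^*(B))$. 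Once this bookkeeping is carried out, both $\omega$ and $\omega_*$ are simultaneously established as the asserted isomorphisms of $\A$-modules and complete $\A_*$-comodules.
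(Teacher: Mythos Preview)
Your argument is correct and follows essentially the same route as the paper: identify the cofinal subtower $\{(B^{\wedge p})^{tC_p}[1-(p-1)n]\}_n$ with the suspended extended-power tower via Proposition~\ref{prop_taterewrite}, apply Theorem~\ref{thm_TopSinger} to obtain $\omega$ after passing to colimits, and then dualize for $\omega_*$. The paper phrases the dualization step slightly more tersely, dualizing the level-wise $\A$-module isomorphisms to $\A_*$-comodule isomorphisms and then passing to limits; you instead dualize the colimit isomorphism $\omega$ directly and therefore have to verify that $\omega$ matches the Tate filtration on $R_+(H^*(B))$ with the image filtration coming from the tower. Your sketch of that verification is correct and is exactly what the paper later records as Corollary~\ref{cor_comparefiltrations}, so there is no circularity. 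One small technical point you omit: Proposition~\ref{prop_taterewrite} requires $B$ to be flat and convergent, so you should begin (as the paper does) by replacing $B$ by a stably equivalent flat, convergent model, noting that this does not change the (co)homology of $B^{\wedge p}$ or $R_+(B)$.
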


\begin{proof}
We may replace $B$ by a stably equivalent flat and convergent symmetric
spectrum, without changing the (co-)homology of $B^{\wedge p}$ and
$R_+(B)$.  By Proposition~\ref{prop_taterewrite} there are $\A$-module
isomorphisms
$$
H^*((B^{\wedge p})^{tC_p}[1{-}(p{-}1)n]) \cong
\Sigma^{1+n} H^*(D_{C_p}(\Sigma^{-n} B))
$$
for each $n$, which by Theorem~\ref{thm_TopSinger}
induce $\A$-module isomorphisms
$$
H^*_c((B^{\wedge p})^{tC_p}) \cong R_+(H^*(B))
$$
after passage to colimits.  The dual $\A_*$-comodule isomorphisms
then induce complete $\A_*$-comodule isomorphisms
$$
H^c_*((B^{\wedge p})^{tC_p}) \cong
	\Hom(R_+(H^*(B)), \F_p) = R_+(H_*(B))
$$
after passage to limits.
\end{proof}

\subsection{The topological Singer $\epsilon$-map}
	\label{subsec_topepsilon}
We now turn to the construction of a stable map $\epsilon_B \: B \to
R_+(B)$ that realizes the Singer homomorphism $\epsilon$ on passage to
cohomology.  The homotopy fixed point property for the $C_p$-equivariant
spectrum $B^{\wedge p}$ will then follow easily.

Let $B$ be a symmetric spectrum that is bounded below and of finite
type over~$\F_p$.  The $C_p$-spectrum $X = B^{\wedge p}$ introduced in
Definition~\ref{dfn_bsmashb} is then also bounded below and of finite
type over~$\F_p$.  We shall make use of parts of the Hesselholt--Madsen
proof that $\THH(B)$ is a \emph{cyclotomic spectrum}.  By the first half
of the proof of \cite{HM1997}*{Prop.~2.1} there is a natural equivalence
$$
\bar s_{C_p} \: [\tEC_p \wedge B^{\wedge p}]^{C_p} \overset{\simeq}\longto
	\Phi^{C_p}(B^{\wedge p}) \,,
$$
where $\Phi^{C_p}(X)$ denotes the \emph{geometric fixed point spectrum}
of $X$.  Furthermore, by the simplicial degree~$k=p-1$ part of the
proof of \cite{HM1997}*{Prop.~2.5} there is a natural equivalence
$$
r'_{C_p} \: \Phi^{C_p}(B^{\wedge p}) \overset{\simeq}\longto B
\,.
$$
If $B$ is a ring spectrum, both of these equivalences are
ring spectrum maps.

\begin{dfn}
Let $\epsilon_B \: B \to R_+(B)$ be the natural stable map given by
the composite
$$
B \overset{\simeq}\longleftarrow \Phi^{C_p}(B^{\wedge p})
  \overset{\simeq}\longleftarrow [\tEC_p \wedge B^{\wedge p}]^{C_p}
  \overset{\hat\Gamma}\longto (B^{\wedge p})^{tC_p} = R_+(B) \,.
$$
If $B$ is a ring spectrum then $\epsilon_B$ is a ring spectrum map.
\end{dfn}

With this notation we can rewrite the homotopy cartesian square
in~\eqref{equ_fund1} for $X = B^{\wedge p}$ as follows:
\begin{equation} \label{equ_fundbsmashb}
\xymatrix{
(B^{\wedge p})^{C_p} \ar[r]^-R \ar[d]_{\Gamma} &
B \ar[d]^{\epsilon_B} \\
(B^{\wedge p})^{hC_p} \ar[r]^-{R^h} & R_+(B)
}
\end{equation}

We thank M.~B{\"o}kstedt for a helpful discussion on the following
two results.

\begin{lemma} \label{lem_suspensionepsilon}
The stable map $\epsilon_B \: B \to R_+(B)$ commutes with suspension,
in the sense that $\epsilon_{\Sigma B} = \Sigma \epsilon_B$.
\end{lemma}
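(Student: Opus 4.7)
The plan is to verify that each of the three maps in the composite defining $\epsilon_B$ is natural with respect to suspension, and to assemble these naturalities into the desired equality. Recall that $\epsilon_B$ factors as
\[
B \xleftarrow{r'_{C_p}} \Phi^{C_p}(B^{\wedge p}) \xleftarrow{\bar s_{C_p}} [\tEC_p \wedge B^{\wedge p}]^{C_p} \xrightarrow{\hat\Gamma} (B^{\wedge p})^{tC_p} = R_+(B),
\]
so it suffices to produce natural identifications of the corresponding spectra for $\Sigma B$ with the suspensions of those for $B$, and to check that each of the three structure maps is carried to its own suspension.

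First I would observe that $(\Sigma B)^{\wedge p} \simeq S^\rho \wedge B^{\wedge p}$ as genuine $C_p$-spectra, where $\rho$ is the regular representation of $C_p$. This stems from $(S^1)^{\wedge p} \cong S^\rho$ with $C_p$ permuting the smash factors, a statement that at the level of B{\"o}kstedt's symmetric spectrum model follows directly from the definitions; decomposing $\rho = \mathbf{1} \oplus W$ yields $S^\rho \simeq S^1 \wedge S^W$. Applying $\Phi^{C_p}$ and using $\Phi^{C_p}(S^V \wedge Y) \simeq S^{V^{C_p}} \wedge \Phi^{C_p}(Y)$ together with $\rho^{C_p} = \mathbf{1}$ produces $\Phi^{C_p}((\Sigma B)^{\wedge p}) \simeq \Sigma \Phi^{C_p}(B^{\wedge p})$; naturality of the Hesselholt--Madsen equivalence $r'_{C_p}$ then identifies $r'_{C_p}$ for $\Sigma B$ with $\Sigma r'_{C_p}$.

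For the Tate-theoretic pieces, the key input is the $C_p$-equivariant shift equivalence $\tEC_p \wedge S^W \simeq \tEC_p$, which holds because $\tEC_p = S^{\infty W}$ and smashing with $S^W$ merely reindexes the colimit. Combined with the fact that the $S^1$-factor in $S^\rho$ has trivial $C_p$-action and so passes through fixed points, this yields
\[
[\tEC_p \wedge (\Sigma B)^{\wedge p}]^{C_p} \simeq \Sigma [\tEC_p \wedge B^{\wedge p}]^{C_p}.
\]
Exploiting dualizability of $S^\rho$ to move it through $\map(EC_{p+}, -)$, together with the same shift equivalence, gives $R_+(\Sigma B) \simeq \Sigma R_+(B)$. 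Naturality of $\bar s_{C_p}$ and $\hat\Gamma$ then forces the two remaining naturality squares to commute.

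The main obstacle will be the coherent bookkeeping of these equivalences, in particular verifying that the shift equivalence $\tEC_p \wedge S^W \simeq \tEC_p$ is compatible with $\bar s_{C_p}$, which is defined through an explicit model from~\cite{HM1997}. Once this compatibility is in place, the three naturality squares for $r'_{C_p}$, $\bar s_{C_p}$, and $\hat\Gamma$ assemble into a commutative ladder whose upper composite is $\epsilon_{\Sigma B}$ and whose lower composite is $\Sigma \epsilon_B$, yielding the lemma.
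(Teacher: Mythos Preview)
Your approach is correct and closely related to the paper's, but the packaging differs in a way worth noting. The paper does not separate the three constituent maps; it combines $r'_{C_p}$ and $\bar s_{C_p}$ into a single leftward equivalence and writes down a two-row diagram
\[
\xymatrix{
\Sigma B \ar[d]_{=} & \Sigma [\tEC_p \wedge B^{\wedge p}]^{C_p}
  \ar[l]_-{\simeq} \ar[r]^-{\Sigma\hat\Gamma} \ar[d]^{\Delta} &
  \Sigma R_+(B) \ar[d]^{\Delta} \\
\Sigma B & [\tEC_p \wedge (\Sigma B)^{\wedge p}]^{C_p}
  \ar[l]_-{\simeq} \ar[r]^-{\hat\Gamma} & R_+(\Sigma B)
}
\]
whose vertical maps are induced by the diagonal inclusion $\Delta\:S^1\to S^p$. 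The point is then only that these $\Delta$-maps are weak equivalences, and the paper obtains this by back-reference to Proposition~\ref{prop_taterewrite}, where exactly this diagonal was used to identify the Tate tower with the tower of suspended extended powers. Your argument unpacks why $\Delta$ becomes an equivalence: writing $S^p=S^\rho\cong S^1\wedge S^W$, the map $\Delta$ is the inclusion of $C_p$-fixed points, and it becomes invertible after smashing with $\tEC_p$ precisely because of the shift equivalence $\tEC_p\wedge S^W\simeq\tEC_p$ (and similarly on the Tate side). So your route is a more explicit, representation-theoretic unfolding of the same mechanism; the paper's route is shorter because it has already recorded, in Proposition~\ref{prop_taterewrite}, that $\Delta$ induces the relevant equivalences, and so can simply cite that.
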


\begin{proof}
Consider the commutative diagram:
$$
\xymatrix{
\Sigma B \ar[d]_{=} & \Sigma [\tEC_p \wedge B^{\wedge p}]^{C_p}
  \ar[l]_-{\simeq} \ar[r]^-{\Sigma\hat\Gamma} \ar[d]^{\Delta} &
  \Sigma R_+(B) \ar[d]^{\Delta} \\
\Sigma B & [\tEC_p \wedge (\Sigma B)^{\wedge p}]^{C_p}
  \ar[l]_-{\simeq} \ar[r]^-{\hat\Gamma} & R_+(\Sigma B)
}
$$
The vertical maps labeled $\Delta$ are induced by the diagonal inclusion
$S^1 \to S^p$, which on the right hand side is the same map as was used
in the interpretation (Proposition~\ref{prop_taterewrite}) of $R_+(B)$
as the inverse system of suspended extended power constructions.
Hence these maps are weak equivalences.
\end{proof}

\begin{prop} \label{prop_epsilonB}
Let $B$ be a bounded below spectrum of finite type over~$\F_p$.
Then the homomorphism
$$
(\epsilon_B)^* \: H_c^*(R_+(B)) \to H^*(B)
$$
induced on continuous cohomology by the spectrum map $\epsilon_B \:
B \to R_+(B)$ is equal to Singer's homomorphism
$$
\epsilon_{H^*(B)} \: R_+(H^*(B)) \to H^*(B)
$$
associated to the $\A$-module $H^*(B)$.
\end{prop}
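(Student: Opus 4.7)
The plan is to combine the rigidity of Corollary~\ref{cor_uniquehom} with a Yoneda-type naturality argument. Composing $(\epsilon_B)^*$ with the inverse of the isomorphism $\omega$ from Theorem~\ref{thm_topmodelsalg}, we obtain a natural $\A$-linear homomorphism $R_+(H^*(B)) \to H^*(B)$. Since $H^*(B)$ is bounded below and of finite type, Corollary~\ref{cor_uniquehom} provides a unique $\A$-linear homomorphism $g_B \: H^*(B) \to H^*(B)$ such that $(\epsilon_B)^* \circ \omega^{-1} = g_B \circ \epsilon_{H^*(B)}$; the task reduces to showing $g_B = \mathrm{id}$ for every admissible $B$.

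Next I would verify that $g_B$ is natural in $B$. This holds because $\epsilon_B$, $\omega$ and the algebraic $\epsilon$ are all natural in their respective senses. Applying the Yoneda lemma to the representable functor $H^n(-) = [-,\Sigma^n H\F_p]$ for each $n$, degree-preserving natural self-transformations correspond to $[\Sigma^n H\F_p, \Sigma^n H\F_p]_0 \cong \F_p$, so $g_B$ acts on $H^n(B)$ by multiplication by some scalar $c_n \in \F_p$. The $\A$-linearity of $g_B$, applied on the universal example $B = H\F_p$ where nonzero Steenrod operations bridge every pair of nonnegative degrees on the fundamental class, then forces all $c_n$ to coincide to a single scalar $c \in \F_p$ independent of $B$.

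To pin down $c = 1$, I specialize to $B = S$, which is a ring spectrum; by construction $\epsilon_S$ is a ring spectrum map, so $(\epsilon_S)^*$ sends the unit of $H^0_c(R_+(S))$ to $1 \in H^0(S) = \F_p$. The degree-zero piece of $R_+(\F_p)$ is one-dimensional, spanned by $\Sigma x^{-1} \otimes 1$ for $p=2$ and by $\Sigma x y^{-1} \otimes 1$ for $p$ odd, and by the formulas~\eqref{equ_epsilon_even_formula} and~\eqref{equ_epsilon_odd_formula} Singer's $\epsilon_{\F_p}$ carries this class to $\Sq^0(1)=1$ (respectively $\SP^0(1)=1$). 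Once we confirm that $\omega$ identifies the topological unit with exactly this generator, we conclude $c \cdot 1 = 1$, hence $c = 1$ and $g_B = \mathrm{id}$.

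The hard part, I expect, is this final normalization: one must trace the canonical ring-spectrum unit of $R_+(S)=(S^{\wedge p})^{tC_p}$ through the chain of equivalences in Proposition~\ref{prop_taterewrite} and the explicit formulas defining $\omega$ in Theorem~\ref{thm_TopSinger} at the bottom of the filtration, and verify that the scalar relating it to $\Sigma x^{-1}\otimes 1$ (or $\Sigma x y^{-1}\otimes 1$) is exactly $1$ rather than some other element of $\F_p^\times$. This is a careful bookkeeping exercise with signs, suspension isomorphisms and the Hesselholt--Madsen maps $\bar s_{C_p}$ and $r'_{C_p}$, but it is the only content beyond formalities.
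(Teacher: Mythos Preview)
Your approach is essentially the paper's: factor $(\epsilon_B)^* \circ \omega^{-1}$ through a unique $g_B$ via Corollary~\ref{cor_uniquehom}, use naturality in $B$ to reduce to a universal example, and pin down the remaining scalar using a ring-spectrum unit. The paper normalizes on $B = H\F_p$ rather than $B = S$ (either works), then invokes Lemma~\ref{lem_suspensionepsilon} to pass to $\Sigma^n H$ before running the explicit Yoneda argument with maps $f \: B \to \Sigma^n H$; your abstract Yoneda formulation is the same argument. The one point worth flagging is that the normalization you call the ``hard part'' is actually a one-line check: since $\epsilon_H$ (or $\epsilon_S$) is a ring spectrum map and the homological Tate spectral sequence is multiplicative (Proposition~\ref{prop_homological2}), the homology unit is automatically represented by $1 \otimes 1^{\otimes p}$ in the $\hatE^2$-term, and the explicit formulas for $\omega$ from Theorem~\ref{thm_TopSinger} (dually, Proposition~5.15) identify this with $\Sigma x y^{-1} \otimes 1$ (or $\Sigma x^{-1} \otimes 1$ for $p=2$) on the nose---there is no need to unwind the equivalences $\bar s_{C_p}$ and $r'_{C_p}$.
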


\begin{proof}
By Corollary~\ref{cor_uniquehom} there is a unique $\A$-module
homomorphism $g_B \: H^*(B) \to H^*(B)$ that makes the square
$$
\xymatrix{
R_+(H^*(B)) \ar[d]_{\epsilon_{H^*(B)}}
  & H_c^*(R_+(B)) \ar[d]^{(\epsilon_B)^*} \ar[l]_-{\omega}^-{\cong} \\
H^*(B) \ar[r]^-{g_B} & H^*(B)
}
$$
commute.  We must show that $g_B$ equals the identity.

First consider the case $B = H$.  The homological Tate spectral sequence
$$
\hatE^2_{*,*}(H) = \tH^{-*}(C_p; H_*(H)^{\otimes p})
\Longrightarrow H^c_*(R_+(H))
$$
is an algebra spectral sequence (Proposition~\ref{prop_homological2}), and
$\epsilon_H \: H \to R_+(H)$ is a ring spectrum map, so the image of $1
\in H_0(H)$ under $(\epsilon_H)_*$ is represented by $1 \otimes 1^{\otimes
p}$ in $\tH^0(C_p; H_0(H)^{\otimes p})$.  Hence $(\epsilon_H)^*$ maps
the dual class represented by $1 \otimes 1^{\otimes p}$ in $\tH_0(C_p;
H^0(H)^{\otimes p})$ to $1 \in H^0(H)$.  Now $\omega(1 \otimes 1^{\otimes
p}) = \Sigma xy^{-1} \otimes 1 \in R_+(\A)$ and $\epsilon_{\A}(\Sigma
xy^{-1} \otimes 1) = 1 \in \A$, so $g_H(1) = 1$.  (We replace $xy^{-1}$
by $x^{-1}$ for $p=2$.)  Since $g_H$ is an $\A$-module homomorphism,
it must be equal to the identity $\A = H^*(H)$.

The case $B = \Sigma^n H$ then follows by Lemma~\ref{lem_suspensionepsilon}.

In the general case any element in $H^n(B)$ is represented by a map $f
\: B \to \Sigma^n H$, which induces an $\A$-module homomorphism $f^* \:
\Sigma^n \A \to H^*(B)$.  By naturality of the isomorphism $\omega$,
the Singer homomorphism $\epsilon$, and the spectrum map $\epsilon$,
we get a diagram
$$
\xymatrix{
R_+(\Sigma^n \A) \ar[ddd]_{\epsilon_{\Sigma^n \A}}
  \ar[dr]_-{R_+(f^*)}
  &&& H_c^*(R_+(\Sigma^n H)) \ar[ddd]^{(\epsilon_{\Sigma^n H})^*}
  \ar[lll]_-{\omega}^-{\cong} \ar[dl]^-{R_+(f)^*} \\
& R_+(H^*(B)) \ar[d]_{\epsilon_{H^*(B)}}
  & H_c^*(R_+(B)) \ar[d]^{(\epsilon_B)^*} \ar[l]_-{\omega}^-{\cong} \\
& H^*(B) \ar[r]_-{g_B} & H^*(B) \\
\Sigma^n \A \ar[rrr]^-{=} \ar[ur]^-{f^*}
  &&& H^*(\Sigma^n H) \ar[ul]_-{f^*}
}
$$
where the left hand, upper and right hand trapezoids all commute.  The
inner square commutes by construction, and the outer square commutes by
the case $B = \Sigma^n H$.  Since $\epsilon_{\Sigma^n \A}$ is surjective,
it follows that the lower trapezoid also commutes.  Hence $g_B$ equals
the identity on the class $f^*(\Sigma^n 1) \in H^n(B)$.  Since $n$ and
$f$ were arbitrary, this proves that $g_B$ equals the identity on all
of $H^*(B)$.
\end{proof}

The following theorem generalizes the Segal conjecture for $C_p$.

\begin{thm} \label{thm_fixedpoints}
Let $B$ be a bounded below spectrum of finite type over~$\F_p$.  Then the
natural maps
$$
\epsilon_B \: B \to R_+(B) = (B^{\wedge p})^{tC_p}
$$
and
$$
\Gamma\: (B^{\wedge p})^{C_p} \to (B^{\wedge p})^{hC_p}
$$
are $p$-adic equivalences of spectra.
\end{thm}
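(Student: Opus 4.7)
The plan is to establish that $\epsilon_B$ is a $p$-adic equivalence first, and then deduce the corresponding statement about $\Gamma$ from the homotopy cartesian square \eqref{equ_fundbsmashb}. For $\epsilon_B$, I will compare two instances of the inverse limit of Adams spectral sequences provided by Proposition \ref{prop_CMPss}. The source is the ordinary Adams spectral sequence for $B$, viewed as arising from the constant tower $\{B\}_n$, which converges strongly to $\pi_*(B\sphat_p)$ with $E_2$-term $\Ext_{\A}^{*,*}(H^*(B), \F_p)$. The target is the inverse limit Adams spectral sequence for the Tate tower $\{(B^{\wedge p})^{tC_p}[n]\}_n$; by Lemma \ref{lem_twotowers} its homotopy limit is $R_+(B)$, and each term is bounded below and of finite type over $\F_p$ by Proposition \ref{prop_homological} (applied to $X = B^{\wedge p}$, which is bounded below and of finite type since $B$ is). Hence Proposition \ref{prop_CMPss} applies and yields strong convergence to $\pi_*(R_+(B)\sphat_p)$ with $E_2$-term $\Ext_{\A}^{*,*}(H_c^*(R_+(B)), \F_p)$.

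To obtain a morphism of spectral sequences, I compose $\epsilon_B$ with the canonical projections from $R_+(B)$ to each $(B^{\wedge p})^{tC_p}[n]$; this yields a map of towers from the constant tower $\{B\}$ to the Tate tower, and hence a map of inverse limit Adams spectral sequences. By Theorem \ref{thm_topmodelsalg} the $E_2$-term of the target is identified with $\Ext_{\A}^{*,*}(R_+(H^*(B)), \F_p)$, and by Proposition \ref{prop_epsilonB} the map on continuous cohomology induced by $\epsilon_B$ is precisely Singer's homomorphism $\epsilon$. Thus the $E_2$-map is
\[
\epsilon^* \: \Ext_{\A}^{*,*}(H^*(B), \F_p) \longto \Ext_{\A}^{*,*}(R_+(H^*(B)), \F_p),
\]
which by Theorem \ref{thm_extiso} combined with Proposition \ref{prop_extiso} (applied with $N = \F_p$, which is trivially bounded below and of finite type) is an isomorphism.

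By the standard comparison theorem for strongly convergent spectral sequences, the map $(\epsilon_B)\sphat_p \: B\sphat_p \to R_+(B)\sphat_p$ is then a homotopy equivalence, so $\epsilon_B$ is a $p$-adic equivalence. For $\Gamma$, the square \eqref{equ_fundbsmashb} is homotopy cartesian because it arises as the right-hand part of a map of horizontal cofiber sequences in which the left-hand vertical map is an equivalence (see the discussion preceding \eqref{equ_fund1}). Consequently the homotopy fibers of $\Gamma$ and of $\epsilon_B$ agree as spectra, and since the fiber of $\epsilon_B$ is $p$-acyclic, so is that of $\Gamma$; hence $\Gamma$ too is a $p$-adic equivalence. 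The main obstacle is really concentrated in the preceding machinery: identifying the $E_2$-level comparison map with Singer's $\epsilon^*$ requires both the algebraic isomorphism $H_c^*(R_+(B)) \cong R_+(H^*(B))$ of Theorem \ref{thm_topmodelsalg} and the topological realization of Proposition \ref{prop_epsilonB}. Once those are in place, the argument reduces cleanly to the Ext-isomorphism of Gunawardena and Miller.
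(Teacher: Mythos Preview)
Your proof is correct and follows essentially the same approach as the paper: compare the Adams spectral sequence for $B$ with the inverse limit Adams spectral sequence for the Tate tower, identify the $E_2$-map with Singer's $\epsilon^*$ via Proposition~\ref{prop_epsilonB} (and Theorem~\ref{thm_topmodelsalg}), invoke the Gunawardena--Miller $\Ext$-isomorphism, and then deduce the statement for $\Gamma$ from the homotopy cartesian square~\eqref{equ_fundbsmashb}. Your write-up is in fact slightly more explicit than the paper's about why the hypotheses of Proposition~\ref{prop_CMPss} are satisfied and why the square is homotopy cartesian.
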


\begin{proof}
The map $\epsilon_B$ induces a map of spectral sequences
$$
E_2^{*,*}(B) = \Ext_{\A}^{*,*}(H^*(B), \F_p) \to
\Ext_{\A}^{*,*}(H_c^*(R_+(B)), \F_p) = E_2^{*,*}(R_+(B))
$$
where the first is the Adams spectral sequence of $B$, and the second is
the inverse limit of Adams spectral sequences associated to the Tate tower
$\{(B^{\wedge p})^{tC_p}[n]\}_n$, as in Proposition~\ref{prop_CMPss}.
The map converges strongly to the homomorphism
$$
\pi_*(\epsilon_B)\sphat_p \: \pi_*(B\sphat_p) \to \pi_*(R_+(B)\sphat_p)
\,.
$$
By Propositions~\ref{prop_epsilonB} and~\ref{prop_extiso} and
Theorem~\ref{thm_extiso}, the map of $E_2$-terms is an isomorphism, hence
so is the map of $E_\infty$-terms and of the abutments.  In other words,
$\epsilon_B$ is a $p$-adic equivalence.  The corresponding assertion
for $\Gamma$ follows immediately, since diagram~\eqref{equ_fundbsmashb}
is homotopy cartesian.
\end{proof}

\subsection{The Tate spectral sequence for the topological Singer construction}
\label{subsec_tatessforsinger}
We conclude by relating the homological Tate spectral sequence for
$B^{\wedge p}$ to the Tate filtration on the homological Singer
construction for $H_*(B)$, and likewise in cohomology.

\begin{prop}
Let $B$ be a bounded below spectrum of finite type over~$\F_p$.
The homological Tate spectral sequence
$$
\hatE^2_{*,*} = \tH^{-*}(C_p; H_*(B)^{\otimes p})
\Longrightarrow H^c_*((B^{\wedge p})^{tC_p})
$$
converging to $H^c_*(R_+(B)) \cong R_+(H_*(B))$ collapses at the
$\hatE^2$-term.  Hence the $\hatE^2 = \hatE^\infty$-term is given by
$$
\hatE^\infty_{*,*} = P(u, u^{-1}) \otimes
	\F_2\{\alpha^{\otimes 2}\}
$$
for $p=2$, and by
$$
\hatE^\infty_{*,*} = E(u) \otimes P(t, t^{-1}) \otimes
	\F_p\{\alpha^{\otimes p}\}
$$
for $p$ odd.
In each case $\alpha$ runs through an $\F_p$-basis for $H_*(B)$.

For $p=2$ and any $r \in \Z$, $\alpha \in H_q(B)$, the element $u^r
\otimes \alpha \in R_+(H_*(B))$ is represented in the Tate spectral
sequence by
$$
u^{r+q} \otimes \alpha^{\otimes 2} \in \hatE^\infty_{-r-q, 2q} \,.
$$
For $p$ odd and any $i \in \{0,1\}$, $r \in \Z$ and $\alpha \in H_q(B)$,
the element $u^i t^r \otimes \alpha \in R_+(H_*(B))$ is represented in
the Tate spectral sequence by
$$
(-1)^q \nu(q)^{-1} \cdot u^i t^{r+mq} \otimes \alpha^{\otimes p}
	\in \hatE^\infty_{-i-2r-(p-1)q, pq}
\,,
$$
where $m=(p-1)/2$ and $\nu(2j+\epsilon) = (-1)^j (m!)^\epsilon$
for $\epsilon \in \{0,1\}$.
\end{prop}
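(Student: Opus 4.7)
The plan is to compute $\hatE^2_{*,*}$ algebraically, deduce collapse by a dimension comparison with the already-identified abutment, and then read off the representatives by dualizing the explicit formulas in Theorem~\ref{thm_TopSinger}.

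First, I will identify the $\hatE^2$-term by decomposing $H_*(B)^{\otimes p}$ into a direct sum of $\F_p[C_p]$-submodules generated by the $C_p$-orbits of basic tensors $\alpha_1 \otimes \cdots \otimes \alpha_p$ (with the $\alpha_i$ in a fixed basis of $H_*(B)$). An orbit of size $p$ contributes an induced summand $\F_p[C_p] \otimes V$, which is projective over $\F_p[C_p]$ and hence has vanishing Tate cohomology. A singleton orbit is a diagonal tensor $\alpha^{\otimes p}$ and contributes a trivial $\F_p$-summand, each yielding one copy of $\tH^{-*}(C_p; \F_p)$. Summing, $\hatE^2_{*,*} \cong \tH^{-*}(C_p; \F_p) \otimes \F_p\{\alpha^{\otimes p}\}$ with $\alpha$ ranging over an $\F_p$-basis of $H_*(B)$, which gives the stated formulas in the two parities.

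Next I will establish collapse. By Proposition~\ref{prop_homological} the spectral sequence converges strongly to $H^c_*(R_+(B))$, which by Theorem~\ref{thm_topmodelsalg} is isomorphic to $R_+(H_*(B)) \cong \Lambda \ctensor H_*(B)$ where $\Lambda = \tH^{-*}(C_p; \F_p)$. Comparing finite-dimensional bidegree pieces, the subquotients of the Tate filtration on $R_+(H_*(B))$ defined in Section~\ref{sec_algsinger} match the $\hatE^2_{s,t}$ computed above, via the evident correspondence $u^i t^r \otimes \alpha \leftrightarrow u^i t^{r+mq} \otimes \alpha^{\otimes p}$ placing an abutment basis element of total degree $-i-2r+q$ in bidegree $(-i-2r-(p-1)q, pq)$, the same bidegree in which the corresponding $\hatE^2$-generator lives. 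Since differentials can only decrease the size of $\hatE^r_{s,t}$, and the limiting $\hatE^\infty_{s,t}$ must equal the corresponding filtration subquotient of the abutment, every $d^r$ with $r\ge2$ is forced to vanish.

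Finally, for the representatives, I will dualize the explicit formulas in the proof of Theorem~\ref{thm_TopSinger}. Setting $n=0$ in
\[
\omega(\Sigma^n w_{2(r+mn)} \otimes (\Sigma^{-n} a)^{\otimes p})
   = (-1)^{q-n} \nu(q-n)^{-1} \cdot y^{r+mq} \otimes a
\]
and in its companion formula for $\Sigma^n w_{2(r+mn)-1}$, and then dualizing on basis elements (using the pairing described in Section~\ref{subsec_homalgsinger}), yields precisely the stated scalar $(-1)^q \nu(q)^{-1}$ together with the exponent shift $r \mapsto r+mq$ in the $t$-variable. The case $p=2$ is strictly simpler, with all signs and scalars trivial. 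The principal obstacle throughout is bookkeeping: maintaining consistent suspensions, duality conventions, and sign/scalar factors in the translation between the extended-power picture of Theorem~\ref{thm_TopSinger} and the Tate picture supplied by Proposition~\ref{prop_taterewrite}.
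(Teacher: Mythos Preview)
Your approach is correct and reaches the same conclusion, but it differs from the paper's argument in the key step of proving collapse. The paper does not compare dimensions against the abutment. Instead, for each $\alpha \in H_q(B)$ it constructs (using a representing map $f \colon S^q \to H \wedge B$ and the multiplication on $H$) a naively $C_p$-equivariant map $f^p \colon H \wedge S^{pq} \to H \wedge B^{\wedge p}$, hence an $\F_p$-linear map of homological Tate spectral sequences from the case $B = S^q$ to the general case, carrying $u^i t^r \otimes \iota_q^{\otimes p}$ to $u^i t^r \otimes \alpha^{\otimes p}$ on $\hatE^2$. Collapse for the sphere is immediate (its $\hatE^2$-term is concentrated on the single line $t = pq$), and both collapse and the representative formulas for general $B$ then follow by naturality from the sphere case. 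This argument does not invoke Theorem~\ref{thm_topmodelsalg}.

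Your argument instead uses Theorem~\ref{thm_topmodelsalg} to identify the abutment and then forces collapse by matching filtration subquotients with $\hatE^2$. For this you need the spectral-sequence filtration on $H^c_*(R_+(B))$ to correspond under $\omega_*$ to the algebraic Tate filtration on $R_+(H_*(B))$; otherwise you cannot conclude that the subquotients of the abutment have the same dimensions as the $\hatE^2_{s,t}$. In the paper this correspondence is only recorded as Corollary~\ref{cor_comparefiltrations}, which is \emph{deduced from} the present proposition, so citing it would be circular. The fix is available, though: $\omega_*$ is assembled in the proof of Theorem~\ref{thm_topmodelsalg} from the level-wise isomorphisms of Proposition~\ref{prop_taterewrite}, so it is a filtered isomorphism by construction, independently of any knowledge of $\hatE^\infty$. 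You should say this explicitly. Once that is in place, your route is a genuine alternative, trading the construction of the comparison map $f^p$ for the prior knowledge of the abutment; your derivation of the representatives by dualizing the $n=0$ case of Theorem~\ref{thm_TopSinger} is then legitimate and agrees with what the paper obtains from the $B = S^q$ case.
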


\begin{proof}
Consider first the case $B=S^q$.  Then the result is trivial for
dimensional reasons.  The $\hatE^2$-term is concentrated in bidegrees $(*,
pq)$, so there is no room for differentials, and there are no extension
problems to be solved.  The formula for the homological Tate spectral
sequence representative for $u^i t^r \otimes \alpha$ will follow
by dualization from the cohomological case, given below.

Let $H$ be a model for the mod~$p$ Eilenberg--Mac\,Lane spectrum as a
commutative symmetric ring spectrum, and form the spectrum $H^{\wedge
p}$ in $C_p\S\U$ as in Definition~\ref{dfn_bsmashb}.  The iterated
multiplication on $H$ then induces a naively $C_p$-equivariant map
$H^{\wedge p} \to H$.
Let $f \: S^q \to H \wedge B$ represent a class $\alpha \in H_q(B)$,
and consider the naively $C_p$-equivariant composite map
$$
f^p \: H\wedge S^{pq} \to H\wedge (H\wedge B)^{\wedge p}
\simeq H\wedge H^{\wedge p}\wedge B^{\wedge p}
\to H \wedge H \wedge B^{\wedge p} \to H\wedge B^{\wedge p} \,.
$$
On homotopy groups it induces the homomorphism $H_*(S^q)^{\otimes
p} \to H_*(B)^{\otimes p}$ that takes $\iota_q^{\otimes p}$ to
$\alpha^{\otimes p}$, where $\iota_q = \Sigma^q 1$ is the fundamental
class in $H_q(S^q)$.

By applying the Tate construction to this map, we get a map of spectra
$$
(f^p)^{tC_p} \: (H \wedge S^{pq})^{tC_p} \to (H \wedge B^{\wedge p})^{tC_p}
$$
and an associated map of homotopical Tate spectral sequences, converging
to an $\F_p$-linear map
\begin{equation} \label{equ_univmap}
H^c_*(R_+(S^q)) \to H^c_*(R_+(B))
\end{equation}
by Proposition~\ref{prop_homotopyoftate}.  For $p$ odd, this map is
given at the level of $\hatE^2$-terms as sending $u^i t^r \otimes
\iota_q^{\otimes p}$ to $u^i t^r \otimes \alpha^{\otimes p}$, and
similarly for $p=2$.  The statement of the proposition then follows
by naturality.
\end{proof}

Note that the $\F_p$-linear map~\eqref{equ_univmap} is not a homomorphism
of $\A_*$-comodules, because $f^p$ was formed using the product on $H$.

\begin{prop}
Let $B$ be a bounded below spectrum of finite type over~$\F_p$.
The cohomological Tate spectral sequence
$$
\hatE_2^{*,*} = \tH_{-*}(C_p; H^*(B)^{\otimes p})
\Longrightarrow H_c^*((B^{\wedge p})^{tC_p})
$$
converging to $H_c^*(R_+(B)) \cong R_+(H^*(B))$ collapses at the
$\hatE_2$-term, so that
$$
\hatE_\infty^{*,*} = \Sigma P(x, x^{-1}) \otimes
	\F_2\{a^{\otimes 2}\}
$$
for $p=2$, and
$$
\hatE_\infty^{*,*} = \Sigma E(x) \otimes P(y, y^{-1}) \otimes
	\F_p\{a^{\otimes p}\}
$$
for $p$ odd.
In each case $a$ runs through an $\F_p$-basis for $H^*(B)$.

For $p=2$ and any $r \in \Z$, $a \in H^q(B)$, the element $\Sigma x^r
\otimes \alpha \in R_+(H^*(B))$ is represented in the Tate spectral
sequence by
$$
\Sigma x^{r-q} \otimes a^{\otimes 2} \in \hatE_\infty^{1+r-q, 2q} \,.
$$
For $p$ odd and any $i \in \{0,1\}$, $r \in \Z$ and $a \in H^q(B)$,
the element $\Sigma x^i y^r \otimes a \in R_+(H^*(B))$ is represented in
the Tate spectral sequence by
$$
(-1)^q \nu(q) \cdot \Sigma x^i y^{r-mq} \otimes a^{\otimes p}
	\in \hatE_\infty^{1+i+2r-(p-1)q, pq}
\,.
$$
\end{prop}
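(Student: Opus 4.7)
The plan is to parallel the strategy of the preceding homological proposition, exploiting the duality between the two Tate spectral sequences recorded earlier in Section~\ref{sec_tate}. The collapse statement and the description of the $\hatE_\infty$-term can in fact be read off directly by $\F_p$-linear dualization: since $\hatE_r^{s,t} \cong \Hom(\hatE^r_{s,t}, \F_p)$ for all finite $r,s,t$, the $d_r$-differentials in cohomology are the duals of the $d^r$-differentials in homology, and vanishing of the latter (established in the previous proposition) implies vanishing of the former. The stated form of $\hatE_\infty^{*,*}$ is then the linear dual of $\hatE^\infty_{*,*}$, with the basis $\{a^{\otimes p}\}$ dual to $\{\alpha^{\otimes p}\}$ and $\Sigma x^i y^r$ dual to $u^{1-i} t^{-r-i}$ (respectively $\Sigma x^r$ dual to $u^{-r-1}$ for $p=2$).

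For the explicit representatives I would first treat the universal case $B = S^q$. Here $H^*(S^q)^{\otimes p} = \Sigma^{pq}\F_p$ is concentrated in a single degree, so $\hatE_2^{*,*}$ is supported in the single row of total degree $pq$; since the differentials $d_r \: \hatE_r^{s,t} \to \hatE_r^{s+r, t-r+1}$ have nonzero vertical component for $r \ge 2$, there is no room for differentials and no extension problem in the filtration of $H_c^*((S^{pq})^{tC_p})$. The representative of $\Sigma x^i y^r \otimes \iota_q \in R_+(H^*(S^q))$ is then obtained by inverting Theorem~\ref{thm_TopSinger}: the element corresponds to a class of the form $\Sigma^n w_j \otimes (\Sigma^{-n}\iota_q)^{\otimes p}$ in $H^*(\Sigma^n D_{C_p}(\Sigma^{-n} S^q))$ for appropriate $n, j$, and Proposition~\ref{prop_taterewrite} identifies this with a class in $H^*((S^{pq})^{tC_p}[s])$ for a specific Tate filtration degree $s$. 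Reading off $s$ gives the bidegree $(1+i+2r-(p-1)q, pq)$, and inverting the normalization $(-1)^{q-n}\nu(q-n)^{-1}$ of $\omega$ yields the coefficient $(-1)^q \nu(q)$.

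For general $B$, a class $a \in H^q(B)$ is represented by a map $g \: B \to \Sigma^q H$, and together with the unit $u \: S^q \to \Sigma^q H$ (for which $u^*$ pulls the fundamental cohomology class back to $\iota_q$) we obtain $C_p$-equivariant maps $u^{\wedge p}$ and $g^{\wedge p}$ that, after composition with the iterated multiplication $H^{\wedge p} \to H$ as in the homological proof, induce compatible maps of cohomological Tate spectral sequences. By naturality of $\omega$ and of the Tate filtration, the representative of $\Sigma x^i y^r \otimes a$ for general $B$ is the pullback under $g$ of the representative of $\Sigma x^i y^r \otimes \iota$ over $\Sigma^q H$, which in turn pushes back along $u$ to the already identified representative for $S^q$. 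The main technical obstacle will be bookkeeping: the sign $(-1)^q$ and the normalization $\nu(q)$ must be tracked consistently through the suspension shift in Theorem~\ref{thm_TopSinger}, the identification in Proposition~\ref{prop_taterewrite}, and the translation between the $\omega$-convention for $\Sigma^{-1}R_+$ and the $R_+$-convention used in the statement of the proposition, and it is easy to drop a sign in the odd-primary case where both $\nu$ and the twist by the adjoint representation intervene.
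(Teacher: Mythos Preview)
Your first two paragraphs match the paper's proof exactly: the paper says only ``This follows by dualization from the homological case'' for the collapse, and then reads off the $B = S^q$ representatives from the explicit isomorphism $\omega$ of Theorem~\ref{thm_TopSinger}.

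Your third paragraph, however, is unnecessary and does not dualize the homological argument cleanly. The paper does not redo the naturality argument for general $B$; it obtains the general case purely by dualization from the homological proposition, which already treated arbitrary $B$. Since the two Tate spectral sequences are term-by-term linearly dual, with the filtrations of $H^c_*(R_+(B))$ and $H_c^*(R_+(B))$ corresponding under the continuous duality of Corollary~\ref{cor_HcY}, the cohomological representative of $\Sigma x^i y^r \otimes a$ is determined by linear algebra from the known homological representative of the dual class $u^{1-i} t^{-r-1} \otimes \alpha$. Your proposed route via $g \: B \to \Sigma^q H$, $u \: S^q \to \Sigma^q H$, and the multiplication $H^{\wedge p} \to H$ does not translate: in the homological argument that multiplication was used to collapse the extra $H$'s in $H \wedge (H \wedge B)^{\wedge p}$ inside $\pi_*(H \wedge -)^{tC_p}$, but there is no analogous simplification on the cohomological side, and the intermediate spectrum $(\Sigma^q H)^{\wedge p}$ has a large cohomological Tate $\hatE_2$-term that you would have to analyze separately. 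Drop that paragraph and simply dualize.

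One small correction: the pairing in \S\ref{subsec_homalgsinger} gives $\Sigma x^i y^r$ dual to $u^{1-i} t^{-r-1}$, not $u^{1-i} t^{-r-i}$.
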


\begin{proof}
This follows by dualization from the homological case.  In the special
case $B = S^q$, $\Sigma x^i y^r \otimes a^{\otimes p} \in H_c^*(R_+(B))
\cong \tH^{-*}(C_p; H_*(S^q)^{\otimes p})$ is represented by $(-1)^q
\nu(q)^{-1} \cdot \Sigma x^i y^{r+mq} \otimes a \in R_+(H_*(S^q))$, by the
explicit isomorphism given in the proof of Theorem~\ref{thm_TopSinger}.
The formula for the cohomological Tate spectral sequence representative
follows.
\end{proof}

\begin{cor} \label{cor_comparefiltrations}
The Tate filtration
$$
\{F^nR_+(H^*(B))\}_n
$$
of the Singer construction $R_+(H^*(B))$ corresponds, under the
isomorphism $R_+(H^*(B)) \cong H_c^*(R_+(B))$, to the Boardman
filtration
$$
\{F^nH_c^*(R_+(B))\}_n
$$
of $H_c^*(R_+(B))$.
\end{cor}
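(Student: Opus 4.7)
The plan is to reduce the comparison of filtrations to the explicit computation carried out in the preceding proposition, which describes the cohomological Tate spectral sequence representatives of a canonical $\F_p$-basis for $R_+(H^*(B))$. By Proposition~\ref{prop_cohomological}, the Boardman filtration on $H_c^*(R_+(B))$ satisfies
\[
F^sH_c^*(R_+(B))/F^{s+1}H_c^*(R_+(B)) \cong \hatE_\infty^{s,*},
\]
so an element of $H_c^*(R_+(B))$ lies in $F^sH_c^*(R_+(B))$ if and only if its representatives in the successive quotients vanish in all filtration degrees strictly less than $s$.

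The key step is then to read off the filtration degree of each basis element from the preceding proposition. For $p$ odd, that proposition shows that $\omega^{-1}(\Sigma x^i y^r \otimes a)$ (for $i \in \{0,1\}$, $r \in \Z$, and $a \in H^q(B)$) is represented, up to a nonzero scalar, by a class in $\hatE_\infty^{1+i+2r-(p-1)q,\, pq}$, and the analogous statement holds for $p=2$. Since the spectral sequence collapses at $\hatE_2 = \hatE_\infty$, this representative is nonzero in the associated graded, so the element $\omega^{-1}(\Sigma x^i y^r \otimes a)$ lies in $F^sH_c^*(R_+(B)) \setminus F^{s+1}H_c^*(R_+(B))$ precisely for $s = 1+i+2r-(p-1)q$. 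This matches the condition $1+i+2r-(p-1)q \ge n$ defining membership in $F^nR_+(H^*(B))$, and similarly for $p=2$.

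To conclude, I would observe that both filtrations are exhaustive, Hausdorff, and complete (as established in the discussion preceding Definition~\ref{dfn_homologysinger} for the Tate filtration, and as a formal consequence of strong convergence for the Boardman filtration). Since $F^nR_+(H^*(B))$ is by construction the $\F_p$-linear span of the basis vectors whose filtration degree is $\ge n$, and since $\omega$ carries this spanning set onto a spanning set of $F^nH_c^*(R_+(B))$ by the previous paragraph, passage to completions yields the desired equality $\omega(F^nH_c^*(R_+(B))) = F^nR_+(H^*(B))$ for every $n$.

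The main (and essentially only) obstacle is bookkeeping: one must check that the formulas in the preceding proposition are compatible on the nose with the inequality defining $F^nR_+(M)$, including the $+1$ shift and the coefficient $(p-1)q$ coming from $\deg(a^{\otimes p})$ relative to $\deg(a)$. Once this numerical match is verified, the corollary is immediate.
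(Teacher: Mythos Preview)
Your argument is correct and follows essentially the same route as the paper's proof: both identify the Boardman filtration degree of each monomial $\Sigma x^i y^r \otimes a$ via the preceding proposition and match it against the defining inequality $1+i+2r-(p-1)q \ge n$ for $F^nR_+(H^*(B))$. Your phrasing ``$\omega$ carries this spanning set onto a spanning set of $F^nH_c^*(R_+(B))$'' is slightly premature---that is exactly what the completeness/associated-graded argument in your final sentence establishes---but the logic is sound and the paper's own proof is simply a terser version of the same reasoning.
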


\begin{proof}
For each integer~$n$, the Boardman filtration $F^nH_c^*(R_+(B))$
equals the image of $H^*((B^{\wedge p})^{tC_p}[n])$ in $H^*((B^{\wedge
p})^{tC_p})$, which is the part of $H^*((B^{\wedge p})^{tC_p})$
represented in filtrations $\ge n$ at the $\hatE_\infty$-term.  This
corresponds to the part of the Singer construction $H^*(R_+(B))$ spanned
by the monomials $\Sigma x^r \otimes a$ with $1+r-q \ge n$ for $p=2$, and
by the monomials $\Sigma x^i y^r \otimes a$ with $1+i+2r-(p-1)q \ge n$
for $p$ odd, which precisely equals the $n$-th term $F^nR_+(H^*(B))$ of
the Tate filtration, as defined in \textsection \ref{subsec_homalgsinger}.
\end{proof}

\begin{bibdiv}
\begin{biblist}

\bib{AGM1985}{article}{
   author={Adams, J. F.},
   author={Gunawardena, J. H.},
   author={Miller, H.},
   title={The Segal conjecture for elementary abelian $p$-groups},
   journal={Topology},
   volume={24},
   date={1985},
   number={4},
   pages={435--460},
}

\bib{AR2002}{article}{
   author={Ausoni, Ch.},
   author={Rognes, J.},
   title={Algebraic $K$-theory of topological $K$-theory},
   journal={Acta Math.},
   volume={188},
   date={2002},
   number={1},
   pages={1--39},
}

\bib{B1999}{article}{
   author={Boardman, J. M.},
   title={Conditionally convergent spectral sequences},
   conference={
      title={Homotopy invariant algebraic structures},
      address={Baltimore, MD},
      date={1998},
   },
   book={
      series={Contemp. Math.},
      volume={239},
      publisher={Amer. Math. Soc.},
      place={Providence, RI},
   },
   date={1999},
   pages={49--84},
}

\bib{BHM1993}{article}{
   author={B{\"o}kstedt, M.},
   author={Hsiang, W. C.},
   author={Madsen, I.},
   title={The cyclotomic trace and algebraic $K$-theory of spaces},
   journal={Invent. Math.},
   volume={111},
   date={1993},
   number={3},
   pages={465--539},
}

\bib{BM1994}{article}{
   author={B{\"o}kstedt, M.},
   author={Madsen, I.},
   title={Topological cyclic homology of the integers},
   note={$K$-theory (Strasbourg, 1992)},
   journal={Ast\'erisque},
   number={226},
   date={1994},
   pages={7--8, 57--143},
}

\bib{B1}{article}{
   author={B{\"o}kstedt, M.},
   title={Topological Hochschild homology},
   note={Bielefeld preprint},
   date={ca.~1986},
}

\bib{BBLR}{article}{
   author={B{\"o}kstedt, M.},
   author={Bruner, R. R.},
   author={Lun{\o}e-Nielsen, S.},
   author={Rognes, J.},
   title={On cyclic fixed points of spectra},
   note={arXiv:0712.3476 preprint},
   date={2007},
}

\bib{B2000}{article}{
   author={Brun, M.},
   title={Topological Hochschild homology of ${\bf Z}/p^n$},
   journal={J. Pure Appl. Algebra},
   volume={148},
   date={2000},
   number={1},
   pages={29--76},
}

\bib{BMMS}{book}{
   author={Bruner, R. R.},
   author={May, J. P.},
   author={McClure, J. E.},
   author={Steinberger, M.},
   title={$H_\infty $ ring spectra and their applications},
   series={Lecture Notes in Mathematics},
   volume={1176},
   publisher={Springer-Verlag},
   place={Berlin},
   date={1986},
   pages={viii+388},
}

\bib{CE}{book}{
   author={Cartan, H.},
   author={Eilenberg, S.},
   title={Homological algebra},
   publisher={Princeton University Press},
   place={Princeton, N. J.},
   date={1956},
   pages={xv+390},
}

\bib{CMP1987}{article}{
   author={Caruso, J.},
   author={May, J. P.},
   author={Priddy, S. B.},
   title={The Segal conjecture for elementary abelian $p$-groups. II.
   $p$-adic completion in equivariant cohomology},
   journal={Topology},
   volume={26},
   date={1987},
   number={4},
   pages={413--433},
}

\bib{G1987}{article}{
   author={Greenlees, J. P. C.},
   title={Representing Tate cohomology of $G$-spaces},
   journal={Proc. Edinburgh Math. Soc. (2)},
   volume={30},
   date={1987},
   number={3},
   pages={435--443},
}

\bib{G1994}{article}{
   author={Greenlees, J. P. C.},
   title={Tate cohomology in commutative algebra},
   journal={J. Pure Appl. Algebra},
   volume={94},
   date={1994},
   number={1},
   pages={59--83},
}

\bib{GM1995}{article}{
   author={Greenlees, J. P. C.},
   author={May, J. P.},
   title={Generalized Tate cohomology},
   journal={Mem. Amer. Math. Soc.},
   volume={113},
   date={1995},
   number={543},
   pages={viii+178},
}

\bib{HM1997}{article}{
   author={Hesselholt, L.},
   author={Madsen, I.},
   title={On the $K$-theory of finite algebras over Witt vectors of perfect
   fields},
   journal={Topology},
   volume={36},
   date={1997},
   number={1},
   pages={29--101},
}

\bib{HM2003}{article}{
   author={Hesselholt, L.},
   author={Madsen, I.},
   title={On the $K$-theory of local fields},
   journal={Ann. of Math. (2)},
   volume={158},
   date={2003},
   number={1},
   pages={1--113},
}

\bib{HSS2000}{article}{
   author={Hovey, M.},
   author={Shipley, B.},
   author={Smith, J.},
   title={Symmetric spectra},
   journal={J. Amer. Math. Soc.},
   volume={13},
   date={2000},
   number={1},
   pages={149--208},
}

\bib{LMS}{book}{
   author={Lewis, L. G., Jr.},
   author={May, J. P.},
   author={Steinberger, M.},
   author={McClure, J. E.},
   title={Equivariant stable homotopy theory},
   series={Lecture Notes in Mathematics},
   volume={1213},
   note={With contributions by J. E. McClure},
   publisher={Springer-Verlag},
   place={Berlin},
   date={1986},
   pages={x+538},
}

\bib{LS1982}{article}{
   author={Li, H. H.},
   author={Singer, W. M.},
   title={Resolutions of modules over the Steenrod algebra and the classical
   theory of invariants},
   journal={Math. Z.},
   volume={181},
   date={1982},
   number={2},
   pages={269--286},
}

\bib{LDMA1980}{article}{
   author={Lin, W. H.},
   author={Davis, D. M.},
   author={Mahowald, M. E.},
   author={Adams, J. F.},
   title={Calculation of Lin's Ext groups},
   journal={Math. Proc. Cambridge Philos. Soc.},
   volume={87},
   date={1980},
   number={3},
   pages={459--469},
}

\bib{LR2}{article}{
   author={Lun{\o}e-Nielsen, S.},
   author={Rognes, J.},
   title={The Segal conjecture for topological Hochschild
	homology of complex cobordism},
   note={arXiv:1010.???? preprint},
   date={2010},
}

\bib{R1999}{article}{
   author={Rognes, J.},
   title={Topological cyclic homology of the integers at two},
   journal={J. Pure Appl. Algebra},
   volume={134},
   date={1999},
   number={3},
   pages={219--286},
}

\bib{Schwede}{book}{
   author={Schwede, S.},
   title={An untitled book project about symmetric spectra},
   eprint={http://www.math.uni-bonn.de/people/schwede/SymSpec.pdf},
   note={(in preparation)},
   date={2007},
}

\bib{S1980}{article}{
   author={Singer, W. M.},
   title={On the localization of modules over the Steenrod algebra},
   journal={J. Pure Appl. Algebra},
   volume={16},
   date={1980},
   number={1},
   pages={75--84},
}

\bib{S1981}{article}{
   author={Singer, W. M.},
   title={A new chain complex for the homology of the Steenrod algebra},
   journal={Math. Proc. Cambridge Philos. Soc.},
   volume={90},
   date={1981},
   number={2},
   pages={279--292},
}

\bib{W1977}{article}{
   author={Wilkerson, C.},
   title={Classifying spaces, Steenrod operations and algebraic closure},
   journal={Topology},
   volume={16},
   date={1977},
   number={3},
   pages={227--237},
}

\end{biblist}
\end{bibdiv}

\end{document}